\mathchardef\comma=\mathcode`,
\newcommand{\pdfgraphics}{\ifpdf\DeclareGraphicsExtensions{.pdf,.jpg}\else\fi}
\DeclareMathOperator*{\argmin}{arg\,min}
\newcommand{\infconv}{\triangle}
\definecolor{darkblue}{rgb}{0.9,0.9,1}
\definecolor{hanblue}{rgb}{0.27, 0.42, 0.81}
\definecolor{red}{rgb}{1.0, 0.0, 0.0}
\definecolor{darkred}{rgb}{0.55, 0.0, 0.0}
\definecolor{darkgreen}{rgb}{0.0,0.55, 0.0}
\definecolor{lightgray}{rgb}{0.95,0.95,0.95}
\newcommand{\rg}{\mathrm{rg}}
\newcommand{\domain}{\mathrm{dom}}
\newcommand{\weakstar}{\stackrel{*}{\rightharpoonup}}
\newcommand{\st}{\,|\,}
\newcommand{\bigst}{\,\big |\,}
\newcommand{\Bigst}{\,\Big |\,}
\newcommand{\Biggst}{\,\Bigg |\,}
\newcommand{\Sym}{{\rm Sym}}
\newcommand{\mI}{\mathcal{I}}
\newcommand{\mM}{\mathcal{M}}
\newcommand{\height}{\mathcal{H}}
\newcommand{\symgrad}{\mathcal{E}} %
\DeclareMathOperator{\Id}{Id} %
\newcommand{\emb}{I}
\newcommand{\Ic}{\mathcal{I}}
\newcommand{\BV}{\text{BV}}
\DeclareMathOperator{\TV}{TV}
\DeclareMathOperator{\TGV}{TGV}
\DeclareMathOperator{\BD}{BD}
\renewcommand{\div}{{\rm div}\,}
\newcommand{\R}{\mathbb{R}}
\newcommand{\Z}{\mathbb{Z}}
\newcommand{\N}{\mathbb{N}}
\newcommand{\M}{\mathcal{M}}
\newcommand{\Gc}{\mathcal{G}}
\newcommand\restr[2]{{%
  \left.\kern-\nulldelimiterspace %
  #1 %
  \vphantom{\big|} %
  \right|_{#2} %
  }}
\numberwithin{equation}{section}
\theoremstyle{plain}
\newtheorem{teo}{Theorem}[section]
\newtheorem{lemma}[teo]{Lemma}
\newtheorem{prop}[teo]{Proposition}
\newtheorem{cor}[teo]{Corollary}
\newtheorem{dfnz}[teo]{Definition}
\theoremstyle{remark}
\newtheorem{rem}[teo]{Remark}
\DeclareMathOperator*{\bigplus}{\scalerel*{+}{\sum}}
\def\nsz{0.4} %
\def\xsz{4} %
\def\ysz{2} %
\tikzset{
  rgraph/.style={ %
    xscale=\xsz, %
    yscale=\ysz
    },
  edge/.style={ %
    ->,
    >=triangle 60, %
    shorten <= \nsz cm,
    shorten >= \nsz cm,
    },
  dir/.style={ %
    shorten <= 0,
    shorten >= 0,
    >=angle 60
    },
  pics/edge/.style={code={ %
	\foreach \X [count=\i from -1] in {#1}
	{
		\ifthenelse{-1=\i}{ %
			\draw[fill=lightgray,draw=none] (-2*\nsz,-1.6*\nsz) rectangle (2*\nsz,1.6*\nsz);
		    \draw[fill=white] (-1.9*\nsz,-0.6*\nsz) rectangle (1.9*\nsz,0.6*\nsz);
		    \draw[->,dir] (1.9*\nsz,0.6*\nsz) -- (2.6*\nsz,0.6*\nsz);
		    \draw[<-,dir] (-2.6*\nsz,-0.6*\nsz) -- (-1.9*\nsz,-0.6*\nsz);
		}{}
		\node[fill=none,inner sep=0cm] at (0*\nsz*\i,1.1*\nsz*\i) {\tiny \X};
	}}},
  pics/node/.style={code={ %
  	\coordinate (-n) at (0,\nsz);
    \coordinate (-ne) at (45:\nsz);
    \coordinate (-e) at (\nsz,0);
    \coordinate (-se) at (-45:\nsz);    
    \coordinate (-s) at (0,-\nsz);
    \coordinate (-sw) at (-135:\nsz);    
    \coordinate (-w) at (-\nsz,0);
    \coordinate (-nw) at (135:\nsz);    
    \coordinate () at (135:0);    
	\draw[pic actions,thick] (0,0) circle (\nsz);
	\foreach \X [count=\i from -1] in {#1}
	{
		\ifthenelse{-1 = \i}{
			\node at (0,-1.5*\nsz*\i) {\tiny \X};			
			\draw[thick] (-\nsz,0) -- (\nsz,0);
			\draw[thick] (0,-\nsz) -- (0,\nsz);
			}{
			\draw[fill=white,draw=none] (0,0) circle (0.98*\nsz); 
			\node at (0,0) {\tiny \X};			
			}
	}}}
}
\tikzset{
  rgraphtgv/.style={ %
    xscale=\xsz, %
    yscale=\ysz
    },
  edgetgv/.style={ %
    ->,
    >=triangle 60, %
    shorten <= \nsz cm,
    shorten >= \nsz cm,
    },
  dirtgv/.style={ %
    shorten <= 0,
    shorten >= 0,
    >=angle 60
    },
  pics/edgetgv/.style={code={ %
	\foreach \X [count=\i from -1] in {#1}
	{
		\ifthenelse{-1=\i}{ %
			\draw[fill=lightgray,draw=none] (-1.9*\nsz,-1.6*\nsz) rectangle (1.9*\nsz,1.6*\nsz);
		    \draw[fill=white] (-2.8*\nsz,-0.6*\nsz) rectangle (2.8*\nsz,0.6*\nsz);
		    \draw[->,dirtgv] (2.8*\nsz,0.6*\nsz) -- (3.4*\nsz,0.6*\nsz);
		    \draw[<-,dirtgv] (-3.4*\nsz,-0.6*\nsz) -- (-2.8*\nsz,-0.6*\nsz);
		}{}
		\node[fill=none,inner sep=0cm] at (0*\nsz*\i,1.1*\nsz*\i) {\tiny \X};
	}}},
  pics/node/.style={code={ %
  	\coordinate (-n) at (0,\nsz);
    \coordinate (-ne) at (45:\nsz);
    \coordinate (-e) at (\nsz,0);
    \coordinate (-se) at (-45:\nsz);    
    \coordinate (-s) at (0,-\nsz);
    \coordinate (-sw) at (-135:\nsz);    
    \coordinate (-w) at (-\nsz,0);
    \coordinate (-nw) at (135:\nsz);    
    \coordinate () at (135:0);    
	\draw[pic actions,thick] (0,0) circle (\nsz);
	\foreach \X [count=\i from -1] in {#1}
	{
		\ifthenelse{-1 = \i}{
			\node at (0,-1.5*\nsz*\i) {\tiny \X};			
			\draw[thick] (-\nsz,0) -- (\nsz,0);
			\draw[thick] (0,-\nsz) -- (0,\nsz);
			}{
			\draw[fill=white,draw=none] (0,0) circle (0.98*\nsz); 
			\node at (0,0) {\tiny \X};			
			}
	}}}
}
\tikzset{
  predualrgraph/.style={ %
    xscale=\xsz, %
    yscale=\ysz
    },
  predge/.style={ %
    ->,
    >=triangle 60, %
    shorten <= \nsz cm,
    shorten >= \nsz cm,
    },
  predir/.style={ %
    shorten <= 0,
    shorten >= 0,
    >=angle 60
    },
  pics/predge/.style={code={ %
	\foreach \X [count=\i from -1] in {#1}
	{
		\ifthenelse{-1=\i}{ %
			\draw[fill=lightgray,draw=none] (-2*\nsz,-1.6*\nsz) rectangle (2*\nsz,1.6*\nsz);
		    \draw[fill=white] (-1.9*\nsz,-0.6*\nsz) rectangle (1.9*\nsz,0.6*\nsz);
		    \draw[<-,predir] (1.9*\nsz,0.6*\nsz) -- (2.6*\nsz,0.6*\nsz);
		    \draw[->,predir] (-2.6*\nsz,-0.6*\nsz) -- (-1.9*\nsz,-0.6*\nsz);
		}{}
		\node[fill=none,inner sep=0cm] at (0*\nsz*\i,1.1*\nsz*\i) {\tiny \X};
	}}},
  pics/node/.style={code={ %
  	\coordinate (-n) at (0,\nsz);
    \coordinate (-ne) at (45:\nsz);
    \coordinate (-e) at (\nsz,0);
    \coordinate (-se) at (-45:\nsz);    
    \coordinate (-s) at (0,-\nsz);
    \coordinate (-sw) at (-135:\nsz);    
    \coordinate (-w) at (-\nsz,0);
    \coordinate (-nw) at (135:\nsz);    
    \coordinate () at (135:0);    
	\draw[pic actions,thick] (0,0) circle (\nsz);
	\foreach \X [count=\i from -1] in {#1}
	{
		\ifthenelse{-1 = \i}{
			\node at (0,-1.5*\nsz*\i) {\tiny \X};			
			\draw[thick] (-\nsz,0) -- (\nsz,0);
			\draw[thick] (0,-\nsz) -- (0,\nsz);
			}{
			\draw[fill=white,draw=none] (0,0) circle (0.98*\nsz); 
			\node at (0,0) {\tiny \X};			
			}
	}}}
}
\begin{document}
\pdfgraphics 

\pdfgraphics %

\title{Regularization Graphs --- A unified framework for variational regularization of inverse problems}
\author{Kristian Bredies\footnote{Kristian Bredies, Institute of Mathematics and Scientific Computing,
    University of Graz, Heinrichstra\ss{}e 36, A-8010 Graz,
    Austria. Email: \texttt{kristian.bredies@uni-graz.at}} \and Marcello
  Carioni\footnote{Marcello Carioni, University of Cambridge, Department of Applied Mathematics and Theoretical Physics, Wilberforce Road, Cambridge
CB3 0WA, UK Email: \texttt{mc2250@maths.cam.ac.uk}} \and    
    Martin Holler \footnote{Martin Holler, Institute of Mathematics and Scientific Computing,  University of Graz, Heinrichstra\ss{}e 36, A-8010 Graz, Austria. Email: \texttt{martin.holler@uni-graz.at}
    }}

\date{}

\maketitle

\begin{abstract}
\sloppy We introduce and study a mathematical framework for a broad class of regularization functionals for ill-posed inverse problems: \emph{Regularization Graphs}. Regularization graphs allow to construct functionals using as building blocks linear operators and convex functionals, assembled by means of operators that can be seen as generalizations of classical infimal convolution operators.
This class of functionals exhaustively covers existing regularization approaches and it is flexible enough to craft new ones in a simple and constructive way.
We provide well-posedness and convergence results with the proposed class of functionals in a general setting. Further, we consider a bilevel optimization approach to learn optimal weights for such regularization graphs from training data. We demonstrate that this approach is capable of optimizing the structure and the complexity of a \emph{regularization graph}, allowing, for example, to automatically select a combination of regularizers that is optimal for given training data.
\vskip .3truecm \noindent Key words: Inverse problem, regularization functional, graph data structure, bilevel optimization.

  \vskip.1truecm \noindent 2020 Mathematics Subject Classification:
65J20, %
65K10, %
49J45.	%

\end{abstract}

\section{Introduction}

In the last decades, a significant part of inverse problems theory has revolved around constructing suitable regularization approaches that allow for a reliable solution of ill-posed inverse problems. 
Among those, energy-based methods such as Tikhonov regularization \cite{Tikhonov1} have been successful both with respect to mathematical guarantees, e.g., on well-posedness and stability, and with respect to practical performance in applications.
An important cornerstone of energy-based methods are regularization functionals, which are responsible for stabilizing the ill-posed inversion of the forward model and for incorporating prior knowledge, such as smoothness, on the sought solution. 
The later is relevant in particular when dealing with highly structured data such as image data, where a suitable inclusion of prior knowledge makes a significant difference regarding the overall performance of the resulting method.

In this context, non-smooth sparsity-based methods building on measures, measure-valued differential operators, basis transforms or frames have become very popular. 
Besides the celebrated total variation (TV) functional \cite{rudinosherfatemi}, those include methods building on higher-order derivatives such as second-order TV \cite{hinterberger2006boundedhessian_mh}, infimal-convolution-based approaches \cite{chambollelions} or the total generalized variation (TGV) functional \cite{tgvbredieskunischpock}, see \cite{Holler19_ip_review_paper} for a recent review. Transform-based methods include wavelet-, curvelet- or shearlet transforms \cite{Kutyniok12_shearlet_book_mh, mallat2009wavelettour_mh,candes2000curvelets_mh} as well as learned dictionaries \cite{elad2010sparse}. %

Also more specific approaches tailored, for instance, to model certain oscillations \cite{estellers2015, bredies2018oscitgv,hollerkunisch2014, Lefkimmiatis2015StructureTT,   meyeroscillating2001,  Parisotto_Lellmann_Masnou_Schonlieb_2018, Parisotto2018HigherOrderTD} or texture \cite{hollerlearningconvolution2019}, as well as different combinations of existing methods exist, such as TV and second-order TV \cite{papafitsoros2014firstandsecondorder_mh}, higher-order regularizers \cite{brinkmann2019unified, chan2010, papafitsoros2014firstandsecondorder_mh}, TV-type functionals with curvelets or shearlets \cite{Scherzer11_tv_plus_curvelet_mh,Guo14tgvshearlet,Gao19infimal}, a combination of different transform-based approaches \cite{Kutyniok13_mh} or the infimal convolution of TV with $L^p$-norms \cite{burger2015infimal_infty,burger2016infimal_finite_p}. We refer to \cite{Holler19_ip_review_paper,aubert2006mathematical_mh,bredies2018mathematical, Lebrun12denoising_review_mh,scherzer2009variationalmethods_mh} for a review of a subset of the plethora of existing methods.

While all these approaches share the goal of providing a model-based regularization for inverse problems, the way and extent to which they are developed and analyzed is rather different and often application-specific. 
Moreover, the choice of any of such methods is mostly done manually. A systematic approach for the analysis and the automatic, data-based design of regularization functionals that covers a broad class of existing methods does not exist to date.

With introducing the framework of regularization graphs, we aim to provide a step in this direction. A regularization graph can be described as a weighted, directed graph together with a collection of functionals and operators associated with the nodes and the edges of the graph, respectively.
Such structure allows to define regularization functionals via a rather arbitrary combination of linear operators and functionals, e.g., via variable splittings or summations. In particular, both the sum and a (generalized) infimal convolution of the functionals associated with two regularization graphs can be 
formulated as a regularization graph functional, where the underlying  graph is obtained by properly combining the two original ones.

This yields a flexible framework for designing new regularization functionals or combining existing ones, e.g., via infimal convolution. Moreover, by associating weights to the edges of such graphs, a learning 
of both the parameters associated with such functionals as well as the structure of the underlying graph is possible. The latter in particular allows to automatically select optimal regularization functionals from a set of possible choices within a bilevel approach.  

A prototypical example of a regularization graph with nodes $V = \{1,2,3,4\} $, directed edges $E = \{ (1,2),(2,3),(2,4)\}$ and weights $(\alpha_e)_{e \in E}$ is provided in Figure \ref{fig:graph_example_intro}.
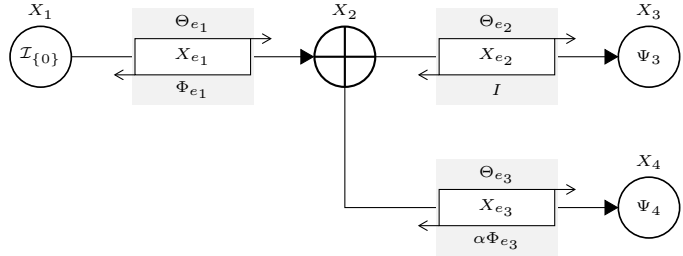
\begin{figure}[t] 
\centering
\begin{tikzpicture}[rgraph]

\draw (0,0) pic (p1) {node={$X_1$,$\mI_{\{0\}}$}};
\draw (1,0) pic (p2) {node={$X_2$}};

\draw (2,0) pic (p3) {node={$X_3$,$\Psi_3$}};
\draw (2,-1) pic (p4) {node={$X_4$,$\Psi_4$}};

\draw[edge] (p1) -- pic{edge={$\Phi_{e_1}$,$X_{e_1}$,$\Theta_{e_1}$}} (p2);
\draw[edge] (p2) -- pic{edge={$\emb$,$X_{e_2}$,$\Theta_{e_2}$}} (p3);
\draw[edge] (p2) -- (1,-1) -- pic{edge={$\alpha \Phi_{e_3}$,$X_{e_3}$,$\Theta_{e_3}$}} (p4);

\end{tikzpicture}
\caption{Example of a \emph{regularization graph} with nodes $V = \{1,2,3,4\}$,  edges $E = \{ (1,2),(2,3),(2,4) \} = \{ e_1,e_2,e_3\}$ and weights $(\alpha_{(1,2)},\alpha_{(2,3)},\alpha_{(2,4)}) = (1,1,\alpha)$. See Remark \ref{rem:graphical} for a detailed interpretation.} \label{fig:graph_example_intro}
\end{figure}
 Here, the operators and the functionals associated to the nodes and the edges of the graph are defined as follows. For $n \in V$ and $e \in E$, $X_n$ and $X_e$ are suitable Banach spaces, $\Phi_e$ are bounded linear operators, $\Theta_e$ are (possibly unbounded) closed range operators and $\Psi_n$ are convex functionals. The spaces $X_n$ and $X_e$ are called \emph{node spaces} and \emph{edge spaces}, respectively, while the $\Psi_n$ are called \emph{node functionals}. Further, we call $\Theta_e$ \emph{forward operators} as they map from the edge space $X_e$ to the direct successor node space  $X_n$. Similarly, we call $\Phi_e$  \emph{backward operators} as they map from the edge space to the direct predecessor node space. Variables $\{ w_{e_1},w_{e_2},w_{e_3}\}$ associated with the edges of the graph, on which both the forward and backward operators are evaluated, are called \emph{edge variables}. 
 
Notice that in our example the root node $1$ and the splitting node $2$ correspond to the functional $\mI_{\{0\}}$, i.e., $\Psi_1 = \Psi_2 = \mI_{\{0\}}$ and $\Phi_{e_2} = \emb$ denotes a continuous embedding of $X_{e_2}$ into $X_2$; see Remark \ref{rem:graphical} below for details. Also note that the weights $(\alpha_e)_{e \in E}$ associated to the graph are depicted in Figure \ref{fig:graph_example_intro} as scalar factors in front of the backward operators $(\Phi_e)_{e \in E}$, where we use the convention that fixed, trivial weights $\alpha_e=1$ are not depicted explicitly. We also remark that, besides the notation $\alpha_e$ for $e \in E$, for specific regularization graphs the non-trivial weights will be often numbered independently of the edge they are associated with; see Figure \ref{fig:ex_reg_graph}.

The regularization functional $R_\alpha : X_1 \rightarrow [0,+\infty]$ corresponding to such a regularization graph is given via minimizing over the involved edge variables $\{ w_{e_1},w_{e_2},w_{e_3}\}$ as
\begin{align*}
R_\alpha(u)  =  & \mathop{\inf_{(w_{e_i})_{i}}}_{} 
  \mI_{\{0\}}(u - \Phi_{e_1}w_{e_1}) + \mI_{\{0\}}(\Theta_{e_1} w_{e_1} -  w_{e_2}- \alpha \Phi_{e_3}w_{e_3}) + \Psi_3(\Theta_{e_2} w_{e_2}) + \Psi_4 (\Theta_{e_3} w_{e_3}) \\
 =  & \mathop{\inf_{w_{e_1},w_{e_3}}}_{}  \Psi_3(\Theta _{e_2} ( \Theta_{e_1} w_{e_1} - \alpha \Phi _{e_3}w_{e_3})) + \Psi_4(\Theta_{e_3}w_{e_3}) \quad \text{s.t. } u = \Phi_{e_1} w_{e_1}.
\end{align*}
The structure defined in this example is a regularization graph under mild additional conditions, most importantly weak* lower semicontinuity and coercivity of $\Psi_3$ and $\Psi_4$, and closedness of the range of each $\Theta_e$, which, for instance, still allows the $\Theta_e$ to be densely defined differential operators and the $\Phi_e$ to be synthesis operators for a given dictionary or frame. The non-trivial weight $\alpha$ allows to adapt the structure of the graph by removing edges, as with $\alpha=0$ and supposing for example that $\Psi_4$ vanishes in zero, we obtain $R_0 (u) = \inf_{ \{\Phi_{e_1} w_{e_1} = u \}}  \Psi_3(\Theta _{e_2}  \Theta_{e_1} w_{e_1})$.

The general structure of a regularization graph is defined in Section \ref{sec:notation} and examples of existing regularization approaches that are included in this setting are provided in Section \ref{sec:examples} and listed in the Appendix. Here, the main conditions on the involved functionals and operators are that the forward operators $\Theta_e$ have closed range (i.e., satisfy a Poincaré-type estimate), that the backward operators $\Phi_e$ are continuous and that the involved node functionals $\Psi_n$ are coercive. 

Under these conditions, we prove well-posedness, stability and convergence results for the application of regularization graphs in a general inverse problem setting. Moreover, we develop a bilevel approach that allows to learn the structure of an optimal graph for a given set of training data and show well-posedness of the resulting non-convex optimization problem.
\medskip

\noindent \textbf{Contribution of the paper in relation to the state of the art.} In a rather abstract setting, general conditions on regularization functionals that allow to guarantee well-posedness, stability and convergence are of course well-known, see for instance \cite{grasmair2011residual, hofmann}. Those, however, are conditions on the overall functionals rather than their building blocks and their verification is often at the same level of difficulty than the results themselves. Furthermore, they do not allow to easily combine different approaches without re-checking the underlying conditions. More specific results also exist, but deal with particular settings such as higher-order regularization \cite{brediesholler2014,brinkmann2019unified}.

More related to the aim of this paper are some works on bilevel optimization, see for instance \cite{calatronibilevel2017} for a review. 
In the probably most closely related work \cite{delosreyes2017}, the authors consider a general bilevel framework that includes TV, the infimal convolution of first and second order TV functionals as well as the TGV functional as particular cases. In contrast to \cite{delosreyes2017}, however, where essentially well-posed linear inverse problems are considered, i.e., those with closed range forward operator, our work is generally applicable to any bounded forward operator. In particular, we do not require closed range and allow for genuinely ill-posed inverse problems, a generalization that is the main source of difficulty for the analysis in this context.

A second, closely related work is the preprint \cite{davoli2019adaptive}. There, the authors consider a bilevel scheme for learning parameters and operators in a TGV-like functional. They provide conditions on the involved operators under which they show well-posedness for a bilevel approach in image denoising. As application they consider an interpolation between a symmetrized and a non-symmetrized differential operator in the second order TGV functional.
Besides being applicable to inverse problems beyond denoising, our work is different to \cite{davoli2019adaptive} in allowing a more flexible combination of linear operators and functionals, far beyond the cascadic structure of TGV. Further, our framework allows for an automatic selection from different choices of existing regularization functionals but also, for instance, to select an optimal order in TGV regularization.

\medskip

\noindent \textbf{Organization of the paper.}  The paper is organized as follows. In Section \ref{sec:notation} we give the precise definition of \emph{regularization graphs} clarifying the main assumptions on the linear operators, the functionals and the involved Banach spaces that yield the results of our work. Also, we provide several examples of existing regularization approaches that can be constructed using a suitable \emph{regularization graph}. In Section \ref{sec:algebraic} we provide basic algebraic properties of regularization graphs, in particular a recursive representation that will be quite useful later on. In Section \ref{sec:analytic_properties} we provide the main analytic properties of functionals associated with regularization graphs that will be the basis for subsequent results on the regularization of inverse problems and bilevel optimization. In particular, we show that any such functional is weak* lower semi-continuous and coercive up to a finite dimensional space. In Section \ref{sec:predual} we provide an equivalent predual formulation of regularization graphs.  Also, the connection to well-known predual representations of existing regularization approaches is made. While the results of this section will not be needed in the subsequent theory, they are nevertheless of interest on their own, in particular in view of optimality conditions and duality-based algorithms.

Section \ref{sec:regularizationinverse} then provides well-posedess and convergence results for the application of regularization graphs to the regularization of linear inverse problems. We focus on linear inverse problems since this allows for a compact presentation of the results without any additional assumptions on the forward model except for continuity. Nevertheless, the analytic results of Section \ref{sec:analytic_properties} also allow to show well-posedness for non-linear inverse problems under standard assumptions on the forward model such as in \cite{hofmann}.
In Section \ref{sec:bilevel}, we develop and analyze a bilevel framework for learning the weights of regularization graphs. In particular, we show well-posedness and an example for a bilevel approach that allows to select optimal regularizers from a set of possible choices by learning zero-weights in the graph. An appendix further provides a list that shows how a selection of existing regularization functionals can be represented by regularization graphs.

\section*{Acknowledgements}

KB, MC and MH gratefully acknowledge support by the Austrian Science Fund (FWF) through the project P 29192 ``Regularization graphs for variational imaging''. MC is supported by the Royal Society (Newton International Fellowship NIF\textbackslash R1\textbackslash 192048 ``Minimal partitions as a robustness boost for neural network classifiers''). The Institute of Mathematics and Scientific Computing in Graz, to which KB and MH are affiliated, is a member of NAWI Graz (\texttt{https://www.nawigraz.at/en/}). KB and MH are further members of/associated with BioTechMed Graz (\texttt{https://biotechmedgraz.at/en/}).

\section{Notation and assumptions}\label{sec:notation}

In this section we define the underlying setting and assumptions used in the paper.
The structure of a general regularization functional will be represented by a directed graph $G=(V,E)$, where $V$ is a non-empty finite set of nodes not containing $0$ and $E \subset (V \times V) \setminus \{(n,n) : n \in V\}$ are the edges. We assume that $G$ has a tree structure and that a root node $ \hat{n} \in V$ exists, i.e., we assume that $G$ contains no cycles and that for each $n \in V$ there exist edges $((n_{i-1},n_i))_{i=1}^M$ in $E$ such that $n_M = n$ and $n_0 = \hat{n}$.

We call a set $F \subset E$ a chain (of length $M>0$ with root $n_0$) if $F = \{ (n_{i-1},n_i) \st i=1,\ldots ,M, \, n_i \neq n_j \text{ for } i \neq j\}$.
Further, for $n \in V$, we denote by $n^-$ the node such that $(n^-,n) \in E$ if $n$ is not the root node of the graph and $n^-=0$ otherwise, noting that $n^-$ is well defined due to the tree structure of $G$.

To any graph $G=(V,E)$ we associate a family of Banach spaces spaces $(X_n)_{n \in V}$ with the nodes and a family of Banach spaces $(X_e)_{e \in E}$ with the edges. Further, we associate the following functionals and operators with $G$.

\begin{itemize}
\item A convex functional $\Psi_n : X_n \rightarrow [0,
\infty]$ for every $n\in V$.  
\item A linear \emph{forward} operator $
\Theta_{(n,m)} :\domain(\Theta_{(n,m)}) \subset X_{(n,m)} \rightarrow X_m$ for every $ (n,m)\in E$.
\item A linear \emph{backward} operator $
\Phi_{(n,m)} :  X_{(n,m)} \rightarrow X_n$ for every $ (n,m)\in E$. %
\end{itemize}

We suppose that each $X_n$, $n\in V$ and each $X_e$, $e\in E$ admits a predual space denoted by $X^\#_n$ and $X^\#_e$, respectively, and
make  the following assumptions on $(\Psi_n)_n$, $(\Theta_e)_e$ and $(\Phi_e)_e$:
\begin{enumerate}[label=(H\arabic*)]
\item \label{ass:lsc_functional} $\Psi_n$ is weak* lower-semicontinuous for every $n\in V$.
\item \label{ass:coerc_functional} For every $n\in V$, $\Psi_n$ is coercive, i.e., for any sequence $(v^k)_k$ in $X_n$ it holds
\[
\|v^k\|_{X_n} \rightarrow + \infty \quad \Rightarrow  \quad  \Psi_n(v^k) \rightarrow +\infty \qquad \text{as} \qquad k \rightarrow +\infty.
\]
\item \label{ass:psi_0_is_0} $\Psi_n(0) = 0$ for every $n \in V$.
\item \label{ass:closed_fwd_operator} $\Theta_{e}$ is weak* closed
for every $e\in E$.
\item \label{ass:ker_fwd_operator}$\ker(\Theta_e)$ is finite dimensional for every $e\in E$.
\item \label{ass:coerc_fwd_operator} For every $e=(n,m) \in E$ there exists $C>0$ and a continuous, linear projection $P_{\ker(\Theta_e)}:X_e \rightarrow \ker(\Theta_e)$ such that 
\begin{equation}\label{eq:poincinassum}
\| w - P_{\ker(\Theta_e)}w \|_{X_e} \leq C \|\Theta_e w\|_{X_m}
\end{equation}
for every $w \in \domain(\Theta_e)$.
\item\label{ass:cont_bkw_operator} $\Phi_e$ is weak* to weak* continuous for every $e \in E$.
\item\label{ass:weak_star_compactness} Bounded sequences in $X_e$ and $X_n$ admit weak* convergent subsequences for every $e \in E$, $n \in V$.
\end{enumerate}

\begin{rem}\label{rem:initialremark} We can observe the following details in the above assumptions:
\begin{itemize}
\item Hypothesis \ref{ass:ker_fwd_operator} implies the existence of a linear and continuous projection on $\ker(\Theta_e)$. 
\item Under Hypothesis \ref{ass:closed_fwd_operator},  \ref{ass:ker_fwd_operator} and \ref{ass:weak_star_compactness}, Hypothesis \ref{ass:coerc_fwd_operator} is equivalent to $\Theta_e$ having closed range, see Lemma \ref{lem:equivalence_coercivity_closed_range} in the Appendix for a proof.
\item Hypothesis \ref{ass:cont_bkw_operator} implies the existence of a continuous predual operator for $\Phi_e$ for each $e \in E$. Consequently, each $\Phi_e$ is continuous as well (see for instance \cite[Remark 3.2]{pikka}).
\item Hypothesis \ref{ass:weak_star_compactness} holds whenever $X_e$ and $X_n$ are reflexive or dual spaces of separable spaces. In case of reflexivity, the notion of weak* convergence can be replaced by weak convergence in all assumptions.
\item Note that, since the $\Psi_n$ are convex, assumption \ref{ass:psi_0_is_0} implies that $\Psi_n(\lambda v) \leq \lambda \Psi_n(v)$ for any $v \in X_n$, $\lambda \in (0,1]$ and $n \in V$. This consequence of assumption \ref{ass:psi_0_is_0} will be needed in the context of varying the weights of a regularization graph. For well-posedness results such as existence and stability as presented in this paper, however, assumption \ref{ass:psi_0_is_0} is not necessary and could be dropped.
\end{itemize}
\end{rem}

We also note that Hypothesis \ref{ass:coerc_functional} implies a coercivity estimate as follows.

\begin{rem}\label{rem:equivcoercivity}
Hypothesis \ref{ass:coerc_functional} holds if and only if there exists $C>0$ and $D \in \R$ such that $\|v\|_{X_n} \leq C\Psi_n(v)+D$ for every $v\in X_n$.
A proof for this can be found for example in \cite[Fact 4.4.8]{borwein}.
\end{rem}

We are now in a position to define the main objects of interest in this paper: Regularization graphs and associated regularization functionals. To this aim, we allow for weights of the form $(\alpha_e)_{e \in E}$ with $\alpha_e \in [0,\infty)$ for all $e \in E$.

\begin{dfnz}[Regularization graph and associated regularization functional]\label{def:reg_graph}
Given $G=(V,E)$ a directed graph with tree structure and root node $\hat{n}$, and the associated spaces, functionals and operators as in Section \ref{sec:notation} such that the hypotheses \ref{ass:lsc_functional} to \ref{ass:weak_star_compactness} hold, the \emph{structure of a regularization graph} is defined as  the tuple $\Gc = (G, (\Psi_n)_{n\in V}, (\Theta_e)_{e\in E}, (\Phi_e)_{e\in E})$. Together with a family of weights $\alpha = (\alpha_e)_{e\in E}$, a \emph{regularization graph} is then defined as the tuple $\Gc_\alpha = (\Gc,\alpha)$.

For any such regularization graph $\Gc_\alpha$, the associated regularization functional $R_\alpha = R(\Gc_\alpha): X_{\hat{n}} \rightarrow [0,\infty]$ (called \emph{regularization graph functional}) is defined as
\begin{equation}\label{defregularizer}
\begin{aligned}
R_\alpha(u)  
& = \inf_{  \substack{ (w_e)_{e \in E} \\ w_e \in \domain(\Theta_e) }} \sum_{n \in V} \Psi_{n} 
\Big( \Theta_{(n^-,n)} w_{(n^-,n)} - \sum_{(n,m) \in E} \alpha_{(n,m)} \Phi_{(n,m)} w_{(n,m)}
\Big) \\
 & = \inf  \Bigg\{ \sum_{n \in V} \Psi_n(v_n)  
 \  \nonumber \Bigst \text{for all }  n\in V, \ e \in E \text{ there exist } w_e \in \domain(\Theta_e) : \\ 
&   \quad \qquad \qquad\qquad\qquad  v_n = \Theta_{(n^-,n)} w_{(n^-,n)} - \sum_{(n,m) \in E} \alpha_{(n,m)}\Phi_{(n,m)} w_{(n,m)} \  \Bigg\}
\end{aligned}
\end{equation}
where we set $\Theta_{(\hat{n}^-,\hat{n})}= \Id$ and $w_{(\hat{n}^-,\hat{n})} = u$. Note that, in case of a trivial regularization graph, i.e., $V = \{\hat n\}$, $E = \emptyset$, we set $R_\alpha(u) = \Psi_{\hat n}(u)$.
\end{dfnz}
\begin{rem}[Weights] Generically, to each edge $e$ within the graph structure of a regularization graph is associated a weight $\alpha_e$. In many cases, e.g., when node functionals only take values in $\{0,\infty\}$, this leads to an overparametrization of the associated regularization functional. To avoid this, we often fix a subset of weights to be equal to $1$ already when defining a regularization graph. Such weights are called \emph{trivial weights}, and the other, non-trivial weights that might still vary are often numbered independently of the edge they are associated with.
\end{rem}
\begin{rem}[Graphical representation of regularization graphs] \label{rem:graphical}
Let us revisit the prototypical graphical representation of a regularization graph in Figure \ref{fig:graph_example_intro}. There, the circles represent nodes, with the node space shown above the circle and the functional $\Psi_n$ inside. A splitting node is represented by a $\oplus$ and is associated with the functional $\mI_{\{0\}}$. The rectangles denote the edges, with the edge space shown in the center, the forward operator $\Theta_e$ shown at the top and the backward operator $\Phi_e$ at the bottom. The weights $(\alpha_e)_{e \in E}$ are depicted as scalar factors in front of the backward operators $(\Phi_e)_{e \in E}$ (with arbitrary numbering independent of their position in the graph), and we use the convention that omitted weights at an edge $e$ correspond to trivial weights $\alpha_e = 1$.

The arrows connect the nodes. At each node $n$, the node functional $\Psi_n$ is evaluated at $\Theta_e$ of the variable from the incoming edge $e$ minus the sum of all $\Phi_e$ applied to the variables $w_e$ from the outgoing edges $\{e = (n,m) \in E: m \in V\}$. The regularization graph functional is given by minimizing this construction over all edge variables in the domain of the corresponding operators $(\Theta_e)_{e \in E}$.
\end{rem}

We can also obtain a more compact representation of $R_\alpha $ as follows. Define the spaces
 \begin{equation*}
 X_V = \bigtimes_{n\in V} X_n  \quad \mbox{and} \quad  X_E = \bigtimes_{e\in E} X_e
 \end{equation*}
 equipped with the product norm, 
and the operator $\Lambda_{\alpha}: \domain(\Lambda_{\alpha}) \subset X_E \rightarrow X_V$ as
\begin{equation}
\label{eq:vectorized_operator_graph}
(\Lambda_{\alpha} w)_n :=
\left\{
\begin{array}{ll}
\displaystyle\Theta_{(n^-,n)} w_{(n^-,n)} - \sum_{(n,m) \in E} \alpha_{(n,m)} \Phi_{(n,m)} w_{(n,m)} & \text{for } n \in V\setminus \{\hat{n}\},\\
\displaystyle - \sum_{(\hat{n},m) \in E} \alpha_{(\hat{n},m)} \Phi_{(\hat{n},m)} w_{(\hat{n},m)} &  \text{for } n = \hat{n}
\end{array}
\right.
\end{equation}
for every $n\in V$ and $w = (w_e)_{e \in E} \in \domain(\Lambda_\alpha)$ where $\domain(\Lambda_\alpha) = \bigtimes_{e\in E} \domain (\Theta_e)$.
Then we can write the functional $R_\alpha $ associated with the regularization graph $\Gc_\alpha$ as
\begin{align*}
R_{\alpha}(u) & = \inf \Big\{ \Psi_{\hat{n}}(u + (\Lambda_{\alpha} w)_{\hat{n}}) +  \sum_{n\in V \setminus \{ \hat{n}\}}  \Psi_n((\Lambda_{\alpha} w)_n) \bigst  w \in \domain(\Lambda_{\alpha})
 \Big\}.
\end{align*}
For notational convenience we also define the functional $\Psi_u : X_V \rightarrow [0,+\infty]$ for $u \in X_{\hat{n}}$ as 
\begin{equation} \label{eq:vectorized psi_reduced}
\Psi_u(v) := \Psi_{\hat{n}}(u + v_{\hat{n}}) + \sum_{n\in V\setminus \{\hat{n}\}} \Psi_n(v_n)  
\end{equation}
such that 
\begin{equation} \label{eq:reg_graph_compact}
R_\alpha(u) = \inf \left\{ \Psi_u(v) \st v \in \rg(\Lambda_\alpha) \right\}.
\end{equation}
\begin{prop}\label{prop:convexineq}
Every regularization graph functional $R_\alpha : X_{\hat n} \rightarrow [0,+\infty]$ is convex, $R_\alpha(0) = 0$ and $R_\alpha(\lambda u) \leq \lambda R_\alpha (u)$ for all $u \in X_{\hat{n}}$, $\lambda \in (0,1]$. Further, in case each $\Psi_n$ for $n \in V$ is positively one homogeneoous, also $R_\alpha$ is positively one homogeneoous.
\end{prop}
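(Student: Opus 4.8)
The plan is to derive all three assertions from the compact representation \eqref{eq:reg_graph_compact}, namely $R_\alpha(u) = \inf\{\Psi_u(v) \st v \in \rg(\Lambda_\alpha)\}$, using the joint convexity of the map $(u,v) \mapsto \Psi_u(v)$ together with the linearity of $\Lambda_\alpha$. First I would establish convexity of $R_\alpha$ by writing it as a partial infimum (infimal projection) of a jointly convex function. Define $G : X_{\hat n} \times X_V \to [0,+\infty]$ by $G(u,v) = \Psi_u(v) + \mI_{\rg(\Lambda_\alpha)}(v)$, where $\mI_{\rg(\Lambda_\alpha)}$ denotes the indicator of $\rg(\Lambda_\alpha)$, so that $R_\alpha(u) = \inf_{v \in X_V} G(u,v)$. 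To see that $G$ is jointly convex, recall that $\Psi_u(v) = \Psi_{\hat n}(u + v_{\hat n}) + \sum_{n \in V\setminus\{\hat n\}} \Psi_n(v_n)$; each summand is the composition of a convex functional $\Psi_n$ with a linear map of the pair $(u,v)$ (namely $(u,v) \mapsto u + v_{\hat n}$, respectively $(u,v) \mapsto v_n$), hence convex in $(u,v)$, and a finite sum of convex functions is convex. Since $\Lambda_\alpha$ is linear, $\rg(\Lambda_\alpha)$ is a linear subspace and therefore convex, so its indicator is convex; thus $G$ is jointly convex. The convexity of $R_\alpha$ then follows from the standard fact that the partial infimum of a jointly convex function is convex, with no complication from $-\infty$ values since $G \ge 0$.

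Next I would treat the value at the origin and the inequality for $\lambda \in (0,1]$. As $\Lambda_\alpha$ is linear we have $0 = \Lambda_\alpha 0 \in \rg(\Lambda_\alpha)$, and by \ref{ass:psi_0_is_0} it holds that $\Psi_0(0) = \sum_{n\in V}\Psi_n(0) = 0$; combined with $R_\alpha \ge 0$ this yields $R_\alpha(0) = 0$. The inequality $R_\alpha(\lambda u) \le \lambda R_\alpha(u)$ for $\lambda \in (0,1]$ is then immediate from convexity and $R_\alpha(0) = 0$: writing $\lambda u = \lambda u + (1-\lambda)\cdot 0$ gives $R_\alpha(\lambda u) \le \lambda R_\alpha(u) + (1-\lambda) R_\alpha(0) = \lambda R_\alpha(u)$, where the extended-real arithmetic is harmless because $R_\alpha(0) = 0$ is finite.

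Finally, under the additional hypothesis that each $\Psi_n$ is positively one-homogeneous, I would obtain the reverse inequality, and hence full homogeneity, by a scaling change of variables. For $\lambda > 0$, starting from $R_\alpha(\lambda u) = \inf_{w \in \domain(\Lambda_\alpha)} \Psi_{\lambda u}(\Lambda_\alpha w)$ I would substitute $w = \lambda w'$; since $\domain(\Lambda_\alpha)$ is a linear subspace (it is the product of the linear subspaces $\domain(\Theta_e)$), the variable $w'$ ranges over the same set, and linearity gives $\Lambda_\alpha(\lambda w') = \lambda\, \Lambda_\alpha w'$. Positive one-homogeneity of each $\Psi_n$ then lets me factor $\lambda$ out of every summand, so that $\Psi_{\lambda u}(\lambda\, \Lambda_\alpha w') = \lambda\, \Psi_u(\Lambda_\alpha w')$, and pulling $\lambda > 0$ out of the infimum yields $R_\alpha(\lambda u) = \lambda R_\alpha(u)$.

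As for the main difficulty, there is little genuine obstacle here: once the compact representation \eqref{eq:reg_graph_compact} is in place, the content is routine. The one point requiring care is the joint convexity step, where I must treat $\Psi_u$ as a function of the \emph{pair} $(u,v)$ so that the affine dependence on $u$ through $u + v_{\hat n}$ is correctly accounted for, rather than fixing $u$, and keep the bookkeeping with $+\infty$-valued summands and the subspace indicator consistent.
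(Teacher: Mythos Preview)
Your proof is correct and follows essentially the same approach as the paper, which simply states that the result follows from the compact representation \eqref{eq:reg_graph_compact} together with Assumption \ref{ass:psi_0_is_0}; you have merely filled in the routine details (joint convexity via infimal projection, $R_\alpha(0)=0$ from $0\in\rg(\Lambda_\alpha)$ and \ref{ass:psi_0_is_0}, the sublinearity from convexity plus $R_\alpha(0)=0$, and the scaling substitution for positive homogeneity).
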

The statement follows easily from the representation in \eqref{eq:reg_graph_compact} together with Assumption \ref{ass:psi_0_is_0}.

\subsection{Examples}\label{sec:examples}

In this section, we provide some concrete examples of regularization graphs to which our general assumptions apply. Here, for $d \in \N$, $d \geq 1$, we always denote by $\Omega \subset \R^d$  a bounded Lipschitz domain. Moreover, we denote by $\emb$ the embedding of a Banach space into another one. We remark that domain and codomain of the embeddings change for different examples. However, they can easily be deduced from the context.

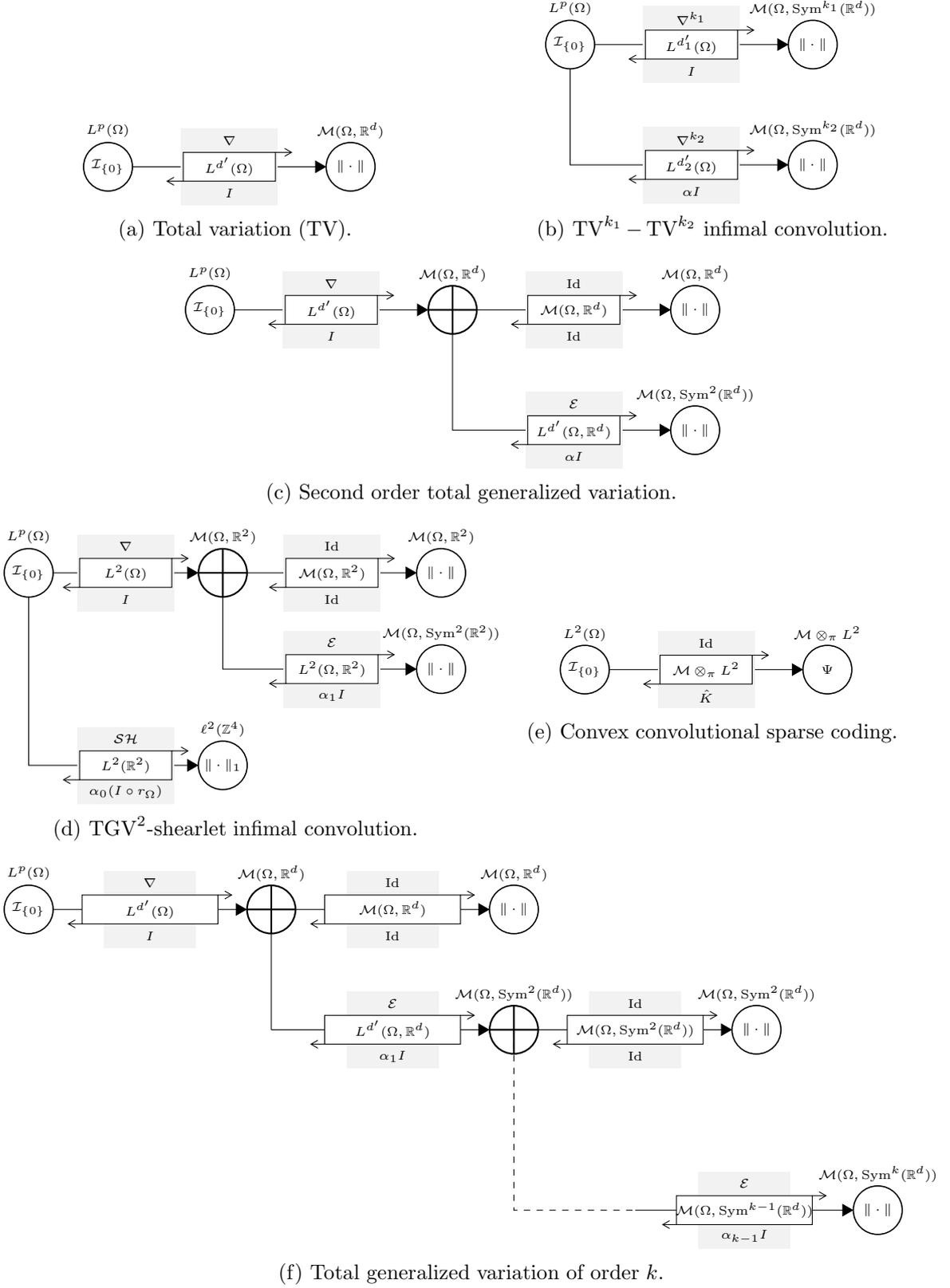
\begin{figure}
\begin{subfigure}[b]{0.5\textwidth}
\centering
\begin{tikzpicture}[rgraph]

\draw (0,0) pic (p1) {node={$L^p(\Omega)$,$\mI_{\{ 0\}}$}};
\draw (1,0) pic (p2) {node={$\mM(\Omega\comma\R^d)$,$\|\cdot \|$}};

\draw[edge] (p1) -- pic{edge={$I$,$L^{d'}(\Omega)$,$\nabla$}} (p2);

\end{tikzpicture}
\subcaption{Total variation (TV).\label{fig:graph_tv}}
\end{subfigure}
\begin{subfigure}[b]{0.5\textwidth}
\centering
\begin{tikzpicture}[rgraph]

\draw (0,0) pic (p1) {node={$L^p(\Omega)$,$\mI_{\{ 0\}}$}};
\draw (1,0) pic (p2) {node={$\mM(\Omega\comma\Sym^{k_1}(\R^d))$,$\|\cdot \|$}};

\draw (1,-1) pic (p3) {node={$\mM(\Omega\comma\Sym^{k_2}(\R^d))$,$\|\cdot \|$}};

\draw[edge] (p1) -- pic{edge={$\emb$,$L^{d_1'}(\Omega)$,$\nabla^{k_1}$}} (p2);
\draw[edge] (p1) -- (0,-1) --  pic{edge={$\alpha I$,$L^{d_2'}(\Omega)$,$\nabla^{k_2}$}} (p3);

\end{tikzpicture}
\subcaption{$\TV^{k_1}-\TV^{k_2}$ infimal convolution.\label{fig:graph_tvk1_tvk2_infcon}}
\end{subfigure}

\medskip

\begin{subfigure}[b]{\textwidth}
\centering

\begin{tikzpicture}[rgraph]

\draw (0,0) pic (p1) {node={$L^p(\Omega)$,$\mI_{\{ 0\}}$}};
\draw (1,0) pic (p2) {node={$\mM(\Omega\comma\R^d)$}};

\draw (2,0) pic (p3) {node={$\mM(\Omega\comma\R^d)$,$\|\cdot \|$}};
\draw (2,-1) pic (p4) {node={$\mM(\Omega\comma\Sym^2(\R^d))$,$\|\cdot \|$}};

\draw[edge] (p1) -- pic{edge={$\emb$,$L^{d'}(\Omega)$,$\nabla$}} (p2);
\draw[edge] (p2) -- pic{edge={$\Id$,$\mM(\Omega\comma\R^d)$,$\Id$}} (p3);
\draw[edge] (p2) -- (1,-1) -- pic{edge={$\alpha I$,$L^{d'}(\Omega\comma\R^d)$,$\symgrad$}} (p4);

\end{tikzpicture}
\subcaption{Second order total generalized variation.\label{fig:graph_tgv2}}

\end{subfigure}

\medskip

\begin{subfigure}[c]{0.5\textwidth}
\centering

\begin{tikzpicture}[rgraph]

\draw (0,0) pic (p1) {node={$L^p(\Omega)$,$ \mI_{\{0\}}$}};
\draw (0.8,0) pic (p2) {node={$\mM(\Omega\comma\R^2)$}};

\draw (1.7,0) pic (p3) {node={$\mM(\Omega\comma\R^2)$,$\|\cdot \|$}};
\draw (1.7,-0.8) pic (p4) {node={$\mM(\Omega\comma\Sym^2(\R^2))$,$\|\cdot \|$}};

\draw[edge] (p1) -- pic{edge={$I$,$L^{2}(\Omega)$,$\nabla$}} (p2);
\draw[edge] (p2) -- pic{edge={$\Id$,$\mM(\Omega\comma\R^2)$,$\Id$}} (p3);
\draw[edge] (p2) -- (0.8,-0.8) -- pic{edge={$\alpha_1 I$,$L^{2}(\Omega\comma\R^2)$,$\symgrad$}} (p4);

\draw (0.8,-1.6) pic (p3) {node={$\ell^{2}(\Z^4)$,$\|\cdot\|_{1}$}};

\draw[edge] (p1) --(0,-1.6) --  pic{edge={$\alpha_0 (I\circ r_\Omega)$,$L^{2}(\R^2)$,$\mathcal{S}\mathcal{H}$}} (p3);
\end{tikzpicture}

\subcaption{$\TGV^2$-shearlet infimal convolution.\label{fig:meyers}}

\end{subfigure}
\begin{subfigure}[c]{0.5\textwidth}

\centering

\begin{tikzpicture}[rgraph]

\draw (0,0) pic (p1) {node={$L^2(\Omega)$,$\mI_{\{ 0\}}$}};
\draw (1,0) pic (p2) {node={$\mathcal{M}\otimes_\pi L^{2}$,$\Psi$}};

\draw[edge] (p1) -- pic{edge={$\hat K$,$\mathcal{M}\otimes_\pi L^{2}$,$\Id$}} (p2);

\end{tikzpicture}
\subcaption{Convex convolutional sparse coding.\label{fig:convolution}}
\end{subfigure}

\medskip

\begin{subfigure}[c]{\textwidth}
\centering
\begin{tikzpicture}[rgraphtgv]

\draw (0,0) pic (p1) {node={$L^p(\Omega)$,$\mI_{\{ 0\}}$}};
\draw (1,0) pic (p2) {node={$\mM(\Omega\comma \R^d)$}};

\draw (2,0) pic (p3) {node={$\mM(\Omega\comma \R^d)$,$\|\cdot \|$}};
\draw (2,-1) pic (p4) {node={$\mM(\Omega\comma\Sym^2(\R^d))$}};

\draw (3,-1) pic (p5) {node={$\mM(\Omega\comma\Sym^2(\R^d))$,$\|\cdot \|$}};

\draw (3.5,-2.5) pic (p6) {node={$\mM(\Omega\comma \Sym^k(\R^d))$,$\|\cdot \|$}};

\draw[edgetgv] (p1) -- pic{edgetgv={$\emb$,$L^{d'}(\Omega)$,$\nabla$}} (p2);
\draw[edgetgv] (p2) -- pic{edgetgv={$\Id$,$\mM(\Omega\comma \R^d)$,$\Id$}} (p3);
\draw[edgetgv] (p4) -- pic{edgetgv={$\Id$,$\mM(\Omega\comma\Sym^2(\R^d))$,$\Id$}} (p5);
\draw (p2) -- (1,-1);
\draw[edgetgv] (0.9,-1) -- pic[xshift=0.5*\nsz cm]{edgetgv={$\alpha_1 \emb$,$L^{d'}(\Omega \comma \R^d)$,$\symgrad$}} (p4);
\draw[dashed] (p4) -- (2,-2.5) -- (2.5,-2.5);
\draw[edgetgv] (2.5 - 0.25*\nsz,-2.5) -- pic{edgetgv={$\alpha_{k-1}\emb$,$\mM(\Omega\comma\Sym^{k-1}(\R^d))$,$\symgrad$}} (p6);

\end{tikzpicture}
\subcaption{Total generalized variation of order $k$.\label{fig:graph_tgvk}}
\end{subfigure}

\caption{Examples of regularization graphs reproducing existing regularization functionals.}\label{fig:ex_reg_graph}
\end{figure}

\medskip

\noindent \textbf{Total variation.} Figure \ref{fig:graph_tv} shows the regularization graph corresponding to the total variation functional. The exponents for the Lebesgue spaces are chosen as $d'=d/(d-1)$ in case $d>1$, $d'=\infty$ else, and $1< p \leq d'$. Thanks to the embedding $\BV(\Omega) \hookrightarrow L^{d'}(\Omega)$, we set $\BV(\Omega)$ as the domain of the linear operator $\nabla : L^{d'}(\Omega) \rightarrow \mathcal{M}(\Omega, \R^d)$. 
With these choices, we now verify assumptions \ref{ass:lsc_functional}--\ref{ass:weak_star_compactness}. Hypotheses \ref{ass:lsc_functional}--\ref{ass:coerc_functional}  follow immediately from the weak* lower semicontinuity and coercivity of the total variation in $\mathcal{M}(\Omega, \R^d)$ \cite{ambrosiobook}. It can easily be verified that the linear operator $\nabla : \BV(\Omega) \subset L^{d'}(\Omega) \rightarrow \mathcal{M}(\Omega, \R^d)$ is weak* closed and its kernel is the set of constant functions implying \ref{ass:closed_fwd_operator} and \ref{ass:ker_fwd_operator}. Assumption \ref{ass:coerc_fwd_operator} 
follows from the Poincaré inequality for $\TV$ \cite{ambrosiobook} and \ref{ass:weak_star_compactness} is a consequence of the Banach-Alaoglu theorem. Finally, the embedding $I : L^{d'}(\Omega) \rightarrow L^p(\Omega)$ is weak*-to-weak* continuous thanks to the exponents choice $1< p \leq d'$, verifying assumption \ref{ass:cont_bkw_operator}.

The functional $R_\alpha : L^p(\Omega) \rightarrow [0,+\infty]$ associated to the regularization graph depicted in Figure \ref{fig:graph_tv} is given as
\[ R_\alpha(u) = \inf_{w \in \BV(\Omega)} \mI_{\{0\}}(u-w) + \|\nabla w\|_{\mM} = \|\nabla u\|_{\mM}
\]
for every $u\in \BV(\Omega)$ and $+\infty$  otherwise. 

\medskip

\noindent \textbf{Infimal convolution of $\TV^{k_1}-\TV^{k_2}$.} Figure \ref{fig:graph_tvk1_tvk2_infcon} shows the regularization graph corresponding to the infimal convolution of $\TV^{k_1}$ and $\TV^{k_2}$ with $k_1,k_2 \in \N$. Here the exponents for the Lebesgue spaces are chosen as $d_i'=d/(d-k_i)$ in case $d_i>k_i$, $d_i'=\infty$ else, and $1<p \leq \min\{d'_1,d'_2\}$. 
Thanks to the embeddings $\BV^{k_i}(\Omega) \hookrightarrow L^{d_i'}(\Omega)$ we set $\BV^{k_i}(\Omega)$ as the domains of the linear operators $\nabla^{k_i} : L^{d_i'}(\Omega) \rightarrow \mathcal{M}(\Omega,\Sym^{k_i}(\R^d))$, where $\Sym^{k}(\R^d)$ denotes the space of symmetric tensors of order $k$, e.g., $\R^d$ for $k=1$ and the space of symmetric $d \times d$ matrices for $k=2$. We refer to \cite{Holler19_ip_review_paper} for details and basic properties of $\BV^k(\Omega)$ and $\TV^k$.

By similar arguments to those used in the previous example and the generalized Poincaré inequality for $\TV^{k_i}$ \cite[Corollary 3.23]{Holler19_ip_review_paper}, it follows that our general assumptions \ref{ass:lsc_functional}--\ref{ass:weak_star_compactness} are satisfied. 
The functional $R_\alpha : L^p(\Omega) \rightarrow [0,+\infty]$ associated to the regularization graph depicted in Figure \ref{fig:graph_tvk1_tvk2_infcon} is given as
\begin{align*}
 R_\alpha(u) &= \inf_{w_i \in \BV^{k_i}(\Omega)} \mI_{\{0\}}(u-w_1-\alpha w_2) + \|\nabla^{k_1} w_1\|_{\mM} + \|\nabla^{k_2} w_2\|_{\mM} \\
 &= \inf_{u=w_1+ \alpha w_2} \TV^{k_1}(w_1) +\TV^ {k_2}(w_2).
\end{align*}

\medskip

\noindent \textbf{Total generalized variation.} Figure \ref{fig:graph_tgv2} shows the regularization graph corresponding to $\TGV^2_\alpha$, the second order $\TGV$ functional as in \cite{tgvbredieskunischpock}.
The exponents for the Lebesgue spaces are chosen as $d'=d/(d-1)$ in case $d>1$, $d'=\infty$ else, and $1< p \leq d'$.
The domain of the linear operator $\nabla : L^{d'}(\Omega) \rightarrow \mathcal{M}(\Omega, \R^d)$ is $\BV(\Omega)$ and the domain of the symmetrized gradient $\symgrad : L^{d'}(\Omega, \R^d) \rightarrow \mathcal{M}(\Omega, \Sym^2(\R^d)  )$ is $\BD(\Omega)$, the space of functions of bounded deformation, where again we take advantage of the embeddings $\BV(\Omega) \hookrightarrow L^{d'}(\Omega)$ and $\BD(\Omega) \hookrightarrow L^{d'}(\Omega,\R^d)$.

By similar arguments to those used the previous examples and the generalized Poincaré inequality for $w \mapsto \|\symgrad w \|_{\mM}$ \cite[Corollary 4.20]{bredies2013tensor} it follows that in this setting our general assumptions \ref{ass:lsc_functional}--\ref{ass:weak_star_compactness} are satisfied.
The  functional $R_\alpha : L^p(\Omega) \rightarrow [0,+\infty]$ associated to the regularization graph functional depicted in Figure \ref{fig:graph_tgv2} is given as
\begin{align*}
 R_\alpha(u) 
 &= \inf_{\substack{w \in \BV(\Omega), \\ w_1 \in \mM(\Omega, \R^d), w_2 \in \BD(\Omega, \R^d)}} 
\mI_{\{0\}}(u-w) + \mI_{\{0\}}(\nabla w -  w_1- \alpha w_2) + \|w_1\|_{\mM} + \|\symgrad w_2\|_{\mM} \\ 
&= \inf_{w \in \BD(\Omega)}  \|\nabla u -\alpha w\|_{\mM} + \|\symgrad w\|_{\mM}
\end{align*}
for $u\in \BV(\Omega)$ and $+\infty$ otherwise. Building on results in \cite{brediesholler2014}, also the TGV functional of arbitrary order $k \in \N$ can be realized via a regularization graph as in Figure \ref{fig:graph_tgvk}.

\medskip

\noindent \textbf{$\TGV^2$-shearlet infimal convolution.}
Figure  \ref{fig:meyers} shows the regularization graph that recovers a $\TGV^2$-shearlet infimal convolution model introduced in \cite{Gao19infimal} (see also \cite{Guo14tgvshearlet}).
Here $\Omega \subset \R^2$ is a bounded Lipschitz domain.
The exponent for the Lebesgue space $L^p(\Omega)$ is chosen as $1< p \leq 2$. The domain of the linear operator $\nabla : L^{2}(\Omega) \rightarrow \mathcal{M}(\Omega, \R^2)$ is $\BV(\Omega)$ and the domain of the symmetrized gradient $\symgrad : L^{2}(\Omega, \R^2) \rightarrow \mathcal{M}(\Omega, \Sym^2(\R^2)  )$ is $\BD(\Omega)$, where again we take advantage of the embeddings $\BV(\Omega) \hookrightarrow L^{2}(\Omega)$ and $\BD(\Omega) \hookrightarrow L^{2}(\Omega, \R^2)$. By similar arguments to those used the previous examples and the generalized Poincaré inequality for $w \mapsto \|\symgrad w \|_{\mM}$ \cite[Corollary 4.20]{bredies2013tensor} it follows that in this setting our general assumptions \ref{ass:lsc_functional}--\ref{ass:weak_star_compactness} are satisfied for edges and nodes realizing the total generalized variation.

In order to introduce the shearlet transform in $L^2(\R^2)$ we start with several notations. First, for $a > 0$ and $s \in \R$ let $A_a$ and $S_s$ be the dilatation matrix and the shearing matrix defined respectively as
\begin{align*}
A_a = \left( \begin{array}{ll}
a & 0 \\
0 & \sqrt{a}
\end{array}
\right),
\qquad 
S_s = \left( \begin{array}{ll}
1 & s \\
0 & 1
\end{array}
\right).
\end{align*}
The discrete shearlet system of $\Psi \in L^2(\R^2)$ is defined as
\begin{align*}
\Psi_{j,k,m}(x) = 2^{\frac{3}{4}j}\Psi (S_k A_{2^j}(x-m))
\end{align*}
for $k,j \in\Z$ and $m \in \Z^2$ \cite[Definition 8]{Kutyniok12_shearlet_book_mh}. This allows to define the discrete shearlet transform operator $\mathcal{SH}$ as
\begin{align}\label{eq:sheardef}
\mathcal{SH}  f(j,k,m) =  \langle f, \Psi_{j,k,m} \rangle_{L^2}  
\end{align}
for $f \in L^2(\R^2)$. 
By standard results in shearlet theory it holds that if $\Psi$ is a classical shearlet, then $\mathcal{SH} : L^2(\R^2) \rightarrow \ell^2(\Z^4)$  is a Parseval frame for $L^2(\R^2)$, that is equivalent to $\|\mathcal{SH} f\|_{\ell^2(\Z^4)} = \|f\|_{L^2(\R^2)}$ for every $f \in L^2(\R^2)$ \cite[Proposition 2]{Kutyniok12_shearlet_book_mh}. In particular, this verifies \ref{ass:coerc_fwd_operator} for $\mathcal{SH}$.  Moreover, a simple computation using that $\|\Psi_{j,k,m}\|_{L^2(\R^2)} = \|\Psi\|_{L^2(\R^2)}$ for every $j,k,m$ together with H\"older's inequality shows  that $\mathcal{SH}$ is weak*-to-weak* continuous, implying \ref{ass:closed_fwd_operator}. The backward operator $I \circ r_\Omega$  is the composition of the embedding $I : L^2(\Omega) \rightarrow L^p(\Omega)$ with the restriction  $r_\Omega : L^2(\R^2) \rightarrow L^2(\Omega)$. It is immediate to check that $I \circ r_\Omega$ is weak*-to-weak* continuous showing \ref{ass:cont_bkw_operator}. Finally, we remark that the functional $\|\cdot\|_1 : \ell^2(\Z^4) \rightarrow [0,+\infty]$ is intended as  the extension to $+\infty$ of the $\ell^1$-norm on $\ell^2$. Such extension is convex, coercive and weak* lower semicontinuous showing \ref{ass:lsc_functional}--\ref{ass:psi_0_is_0}. The  functional $R_\alpha : L^p(\Omega) \rightarrow [0,+\infty]$ associated to the regularization graph functional depicted in Figure \ref{fig:meyers} is then given as
\begin{align*}
R_\alpha(u) 
& = \inf_{\substack{w_1 \in \BV(\Omega), w_2 \in L^2(\R^2), \\ w_3 \in \mM(\Omega, \R^2), w_4 \in \BD(\Omega)}}  \mI_{\{0\}}(u-w_1-  \alpha_0 r_\Omega w_2 )  + \mI_{\{0\}}(\nabla w_1 - w_3 - \alpha_1 w_4 )  \\
& \quad  \qquad  \qquad \quad  \qquad \qquad+ \|w_3\|_\M + \|\symgrad w_4 \|_\M  + \|\mathcal{S}\mathcal{H} w_2\|_{1} \\
 & =  \inf_{\substack{ w_2 \in L^2(\R^2), \\ w_4 \in \BD(\Omega)} } \|\nabla (u-\alpha_0  r_\Omega w_2) - \alpha_1 w_4\|_\M   + \|\symgrad w_4 \|_\M + \|\mathcal{S}\mathcal{H} w_2\|_{1}.
\end{align*}

\medskip

\noindent \textbf{Convex convolutional sparse coding.}
Figure \ref{fig:convolution} shows the regularization graph corresponding to a data-adaptive convolutional-sparse-coding-based method recently introduced in \cite{hollerlearningconvolution2019}.
As such methods are in general non-convex, in \cite{hollerlearningconvolution2019}, the authors proposed a convex relaxation of the convolution LASSO problem in the tensor product of convolutional filter kernels and coefficient images. We refer to \cite{hollerlearningconvolution2019} for a more detailed description of the model.
We denote by $\mathcal{M} \otimes_\pi L^2$ the projective tensor product between $\mathcal{M}(\Omega_\Sigma)$ and $L^2(\Sigma)$ \cite[Appendix A]{hollerlearningconvolution2019}, where $\Sigma$ is a bounded Lipschitz domain and $\Omega_\Sigma := \Omega + \Sigma \subset \R^d$ is the Minkowski sum of $\Omega$ and $\Sigma$. The operator $\hat K : \mathcal{M} \otimes_\pi L^2 \rightarrow L^2(\Omega)$ is the unique tensor lifting of the bilinear operator $K: \mathcal{M}(\Omega_\Sigma) \times L^2(\Sigma) \rightarrow L^2(\Omega)$ defined essentially as
\begin{equation}
K(\mu,\theta)(x) = \int_{\Omega_\Sigma} \theta(x-y)\, d\mu(y).
\end{equation}
Thanks to \cite[Lemma 2]{hollerlearningconvolution2019}, the operator $\hat K$ is weak* to weak* continuous. We also define the convex functional $\Psi :  \mathcal{M} \otimes_\pi L^2 \rightarrow [0,+\infty]$ as $\Psi(C) = \|C\|_\pi + \nu \|C\|_{{\rm nuc}}$ for every $C \in \mathcal{M} \otimes_\pi L^2$, where $\nu>0$ is a parameter,
\begin{equation}
\|C\|_\pi = \inf\left\{\sum_{i=1}^\infty \|\mu_i\|_{\mathcal{M}}\|\theta_i\|_{L^2} \st C = \sum_{i=1}^\infty \mu_i \otimes_\pi \theta_i\right\} \quad 
\end{equation}
is the projective norm of $\mathcal{M} \otimes_\pi L^2$ and 
\begin{equation}
\|C\|_{{\rm nuc}} = \left\{\begin{array}{ll}
 \sum_{i=1}^\infty \sigma_i(T_C)  & \text{if } C \in L^2(\Omega_\Sigma) \otimes_\pi L^2(\Sigma), \\
 +\infty & \text{otherwise,}
\end{array}
\right.
\end{equation}
is an extension of the nuclear norm, where $\sigma_i(T_C)$ are the singular values of $C$ interpreted as a bounded linear map from $L^2(\Omega_\Sigma)$ to $L^2(\Sigma)$.
By Lemma 1 and Lemma 7 in \cite{hollerlearningconvolution2019} it follows that $\Psi$ is weak* lower semicontinuous.  Hence the general assumptions \ref{ass:lsc_functional}--\ref{ass:weak_star_compactness}  for a regularization graph are satisfied and the functional $R_\alpha : L^2(\Omega) \rightarrow [0,+\infty]$ associated to the regularization graph depicted in Figure \ref{fig:convolution} is given as
\begin{align*}
 R_\alpha(u)& = \inf_{C \in\mathcal{M} \otimes_\pi L^2 } \mI_{\{0\}}(u- \hat KC) + \|C\|_\pi + \|C\|_{{\rm nuc}} \,.
\end{align*}

\begin{rem}
The regularization graph functional for $\TV$, infimal convolution of $\TV^{k_1}-\TV^{k_2}$ and total generalized variation can be extended to $L^1(\Omega)$ even if $L^1(\Omega)$ does not admit a predual.  Such extension is described for general regularization graphs in Proposition \ref{prop:reg_graph_extension_to_X}.
\end{rem}

\section{Algebraic properties of regularization graphs} \label{sec:algebraic}
This section provides a recursive representation of regularization graphs and deals with estimates between different regularization graph functionals as well as their combination via addition or infimal convolution. First we need the definition of the height of a graph.
\begin{dfnz}[Height of a regularization graph]\label{def:height}
Given a regularization graph $\Gc_\alpha$ with $G=(V,E)$ the associated directed graph, we denote by $\height(\Gc_\alpha)$ its height defined as the number of edges in the longest path of $G$ connecting the root to one of the leaves. That is, with $n_0 = \hat{n}$ the root node, we define
\[ \height(\Gc_\alpha) = \max \{ M \st \exists\,  n_1,\ldots,n_{M} \text{ with } (n_{i-1},n_{i}) \in E, \text{ for }i=1,\ldots,M \}\]
if this set is non-empty and define $\height(\Gc_\alpha) = 0$ otherwise, i.e., in case of a trivial graph.
\end{dfnz}
Note that the height of a regularization graph does not depend on the particular choice of weights $\alpha$.
Next, we provide a recursion result that allows us to rewrite a regularization graph of height $h$ in terms of regularization graphs of height $h-1$.

\begin{figure}[t]
\centering
\begin{tikzpicture}[rgraph]

\draw (0,0) pic (p0) {node={$X_{\hat{n}}$,$\Psi_{\hat{n}}$}};

\draw[hanblue] (1,1.5) pic (p1) {node={$X_{\hat{n}^{\hat{e}^1}}$,$\Psi_{\hat{n}^{\hat{e}^1}}$}};

\draw[gray,dashed] (1,0.45) pic (pl1) {node={$X_{\hat{n}^{\hat{e}^l}}$,$\Psi_{\hat{n}^{\hat{e}^l}}$}};
\draw[gray,dashed] (1,-0.45) pic (pl2) {node={$X_{\hat{n}^{\hat{e}^{l+1}}}$,$\Psi_{\hat{n}^{\hat{e}^{l+1}}}$}};

\draw[darkred] (1,-1.5) pic (pL) {node={$X_{\hat{n}^{\hat{e}^{L}}}$,$\Psi_{\hat{n}^{\hat{e}^{L}}}$}};

\draw[edge] (p0) -- pic{edge={$\Phi_{\hat{e}^1}$,$X_{\hat{e}^1}$,$\Theta_{\hat{e}^1}$}} (p1);

\draw[edge,gray,dashed] (p0) --  (pl1);
\draw[edge,gray,dashed] (p0) --  (pl2);

\draw[edge] (p0) -- pic{edge={$\Phi_{\hat{e}^L}$,$X_{\hat{e}^L}$,$\Theta_{\hat{e}^L}$}} (pL);

\foreach \i in {-0.5,0,0.5}
{
	\draw[edge,dashed,hanblue] (p1) -- (1.5,1.5+\i);
	
	\draw[edge,dashed,gray] (pl1) -- (1.5,0.45+\i);
	\draw[edge,dashed,gray] (pl2) -- (1.5,-0.45+\i);

	\draw[edge,dashed,darkred] (pL) -- (1.5,-1.5+\i);
}

\draw[hanblue,thick] (0.75,1) rectangle  (1.5,2);
\node[hanblue] at (1.6,1.5) {\tiny $R_{\alpha^{\hat{e}^1}}^{\hat{e}^1}$};

\draw[darkred,thick] (0.75,-1) rectangle  (1.5,-2);
\node[darkred] at (1.6,-1.5) {\tiny $R_{\alpha^{\hat{e}^L}}^{\hat{e}^L}$};

\end{tikzpicture}

\caption{\label{fig:graph_recursive} The recursive representation of a regularization graph according to Lemma \ref{lem:primal_graph_recursion}.}
\end{figure}
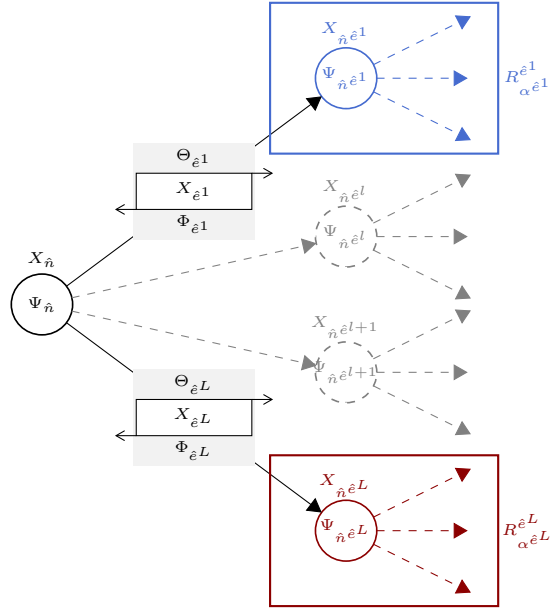

 \begin{lemma}[Recursive representation of regularization graphs] \label{lem:primal_graph_recursion} For $\Gc_\alpha $ a regularization graph of height $h\geq 1$, $G = (V,E)$ the associated directed graph and $\hat{n}$ the root node, let 
$\hat{E}\subset E$ be the set all edges connected to the root node $\hat{n}$, $\hat{n}^{\hat{e}}$ for $\hat{e} \in \hat{E}$ be their endpoints
and let $G^{\hat{e}} = (V^{\hat{e}},E^{\hat{e}})$ be the subtree of $G=(V,E)$ with $\hat{n}^{\hat{e}}$ as root node.
 
Then, there exist regularization graphs $\Gc^{\hat{e}}_{\alpha^{\hat{e}}}$ with associated directed graphs $G^{\hat{e}} = (V^{\hat{e}},E^{\hat{e}})$ of height at most $h-1$ and weights $(\alpha^{\hat{e}}_e)_{e \in E^{\hat{e}}}$ such that, with $R_{\alpha} = R(\Gc_{\alpha})$ and $R^{\hat{e}}_{\alpha^{\hat{e}}} = R(\Gc^{\hat{e}}_{\alpha^{\hat{e}}})$ the associated functionals, the following recursive representation holds
\begin{equation}\label{eq:recursiveformula}
R_\alpha(u) = \inf\Big
  \{ \Psi_{\hat{n}} \big(u-  \sum_{{\hat{e}} \in \hat{E}} \alpha_{{\hat{e}}} \Phi_{{\hat{e}}} w_{{\hat{e}}} \big) + \sum_{{\hat{e}} \in \hat{E}}  R_{\alpha^{\hat{e}}}^{\hat{e}}(\Theta_{{\hat{e}}} w_{{\hat{e}}}) \Bigst w_{{\hat{e}}} \in \domain(\Theta_{{\hat{e}}}) \text{ for all } {\hat{e}} \in \hat{E}
 \Big\}.
\end{equation}
 \begin{proof}

We explicitly construct the claimed recursive representation as visualized in Figure \ref{fig:graph_recursive}. 
First note that we can re-write $R_\alpha$ as
 \begin{multline*}
  R_\alpha(u) = \inf \bigg\{
  \Psi_{\hat{n}} \Big (u-\sum_{{\hat{e}} \in \hat{E}} \alpha_{{\hat{e}}}  \Phi_{{\hat{e}}}  w_{{\hat{e}}} \Big) +  \sum_{{\hat{e}} \in \hat{E}} \sum_{n \in V^{\hat{e}}} \Psi_{n}(v_{n})    \Bigst w_{e} \in \domain(\Theta_{e})  \text{ for all } {e} \in E,
 \\ 
\forall {\hat{e}} \in \hat{E},  n \in V^{\hat{e}}: v_{n} =    \Theta_{(n^-,n)}w_{(n^-,n)} - \sum_{(n,m) \in E^{\hat{e}}} \alpha_{(n,m)}  \Phi_{(n,m)} w_{(n,m)} \bigg\}.
 \end{multline*}
 Now define $\Gc^{\hat{e}}_{\alpha^{\hat{e}}}$ to be a regularization graph with graph structure $G^{\hat{e}}= (V^{\hat{e}},E^{\hat{e}})$ and the associated operators, functionals and weights, such that $R^{\hat{e}}_{\alpha^{\hat{e}}} = R(\Gc^{\hat{e}}_{\alpha^{\hat{e}}})$ is given as
\begin{multline*}
 R^{\hat{e}}_{\alpha^{\hat{e}}}(z) = \inf \Bigg\{   \sum_{n \in V^{\hat{e}}} \Psi_{n}(v_{n}) \Bigst w_{e}\in \domain (\Theta_e) \text{ for all }e\in E^{\hat{e}},   \\
\forall n \in V^{\hat{e}}:  v_{n} = \Theta_{(n^-,n)}w_{(n^-,n)} - \sum_{(n,m) \in E^{\hat{e}}} \alpha_{(n,m)}  \Phi_{(n,m)} w_{(n,m)} 
 \Bigg\} \,,
 \end{multline*}
 where we note that here $\hat{n}^{\hat{e}}$ is regarded as a node of $\Gc^{\hat{e}}_{\alpha^{\hat{e}}}$, and thus $\Theta_{((\hat{n}^{\hat{e}})^-,\hat{n}^{\hat{e}})} = \Id$ and $w_{((\hat{n}^{\hat{e}})^-,\hat{n}^{\hat{e}})} = z$. The recursive representation of $R_\alpha$ is then given as
 \begin{multline*}
  R_\alpha(u)  = \inf\bigg\{
  \Psi_{\hat{n}}\Big(u - \sum_{\hat{e} \in \hat{E}} \alpha_{{\hat{e}}}  \Phi_{{\hat{e}}}  w_{{\hat{e}}} \Big)  
  +  \sum_{{\hat{e}} \in {\hat{E}}} \sum_{n \in V^{\hat{e}} } \Psi_{n}(v_{n})  
  \Bigst w_{e} \in \domain(\Theta_{e})  \text{ for all } e \in E,\\ 
 \forall {\hat{e}} \in \hat{E},  \forall n \in V^{\hat{e}}\setminus \{ \hat{n}^{\hat{e}}\}  :  v_{n} =  \Theta_{(n^-,n)}w_{(n^-,n)} - \sum_{(n,m) \in E^{\hat{e}}} \alpha_{(n,m)}\Phi_{(n,m)} w_{(n,m)}, \
     \\
      v_{\hat{n}^{\hat{e}}} =  \Theta_{{\hat{e}}}w_{{\hat{e}}} - \sum_{(\hat{n}^{\hat{e}},m) \in E^{\hat{e}}} \alpha_{(\hat{n}^{\hat{e}},m)}\Phi_{(\hat{n}^{\hat{e}},m)} w_{(\hat{n}^{\hat{e}},m)}  \bigg\}\\
 = \inf\Big
  \{ \Psi_{\hat{n}} \big(u-  \sum_{{\hat{e}} \in \hat{E}} \alpha_{{\hat{e}}} \Phi_{{\hat{e}}} w_{{\hat{e}}} \big) + \sum_{{\hat{e}} \in \hat{E}}  R_{\alpha^{\hat{e}}}^{\hat{e}}(\Theta_{{\hat{e}}} w_{{\hat{e}}}) \Bigst w_{{\hat{e}}} \in \domain(\Theta_{\hat{e}}) \text{ for all } {\hat{e}} \in \hat{E}
 \Big\},
 \end{multline*}
 which proves the assertion.
 \end{proof}
 \end{lemma}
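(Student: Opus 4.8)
The plan is to exploit the tree structure of $G$ to decouple the sum defining $R_\alpha$ into a single term attached to the root node $\hat{n}$ plus a collection of mutually independent contributions, one for each subtree $G^{\hat{e}}$.

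First I would record the underlying partition. Since $G$ is a tree with root $\hat{n}$, the remaining nodes decompose as a disjoint union $V \setminus \{\hat{n}\} = \bigcup_{\hat{e} \in \hat{E}} V^{\hat{e}}$, and the edges split as $E = \hat{E} \cup \bigcup_{\hat{e} \in \hat{E}} E^{\hat{e}}$, again disjointly. The key structural fact is that for any $n \in V^{\hat{e}}$ every outgoing edge $(n,m)$ lies in $E^{\hat{e}}$, while its unique incoming edge is either $\hat{e}$ itself (exactly when $n = \hat{n}^{\hat{e}}$) or an edge of $E^{\hat{e}}$. Consequently, in the objective of \eqref{defregularizer} no edge variable is shared between two distinct subtree blocks.

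Next I would isolate the $n = \hat{n}$ summand. Using $\Theta_{(\hat{n}^-,\hat{n})} = \Id$, $w_{(\hat{n}^-,\hat{n})} = u$, and the fact that the outgoing edges of $\hat{n}$ are exactly the elements of $\hat{E}$, this summand equals $\Psi_{\hat{n}}(u - \sum_{\hat{e} \in \hat{E}} \alpha_{\hat{e}} \Phi_{\hat{e}} w_{\hat{e}})$, depending only on $u$ and $(w_{\hat{e}})_{\hat{e} \in \hat{E}}$. The remaining summands group by subtree. For $n = \hat{n}^{\hat{e}}$ the incoming contribution is $\Theta_{\hat{e}} w_{\hat{e}}$, and so, defining $\Gc^{\hat{e}}_{\alpha^{\hat{e}}}$ to be the regularization graph with graph structure $G^{\hat{e}}$ and the inherited spaces, operators, functionals and weights, the block $B_{\hat{e}}$ of summands indexed by $n \in V^{\hat{e}}$ is precisely the expression whose infimum over the internal variables $(w_e)_{e \in E^{\hat{e}}}$ defines $R^{\hat{e}}_{\alpha^{\hat{e}}}(z)$ evaluated at $z = \Theta_{\hat{e}} w_{\hat{e}}$; here one uses that the subtree functional treats $\hat{n}^{\hat{e}}$ as its own root, setting $\Theta_{((\hat{n}^{\hat{e}})^-,\hat{n}^{\hat{e}})} = \Id$ and $w_{((\hat{n}^{\hat{e}})^-,\hat{n}^{\hat{e}})} = z$.

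The main step is then to factor the infimum. Because the objective is the root term plus the blocks $B_{\hat{e}}$, which involve pairwise disjoint sets of edge variables, the infimum over the internal variables distributes over the sum: for each fixed $(w_{\hat{e}})_{\hat{e} \in \hat{E}}$ one has $\inf_{(w_e)_{e \in E \setminus \hat{E}}} \sum_{\hat{e} \in \hat{E}} B_{\hat{e}} = \sum_{\hat{e} \in \hat{E}} \inf_{(w_e)_{e \in E^{\hat{e}}}} B_{\hat{e}} = \sum_{\hat{e} \in \hat{E}} R^{\hat{e}}_{\alpha^{\hat{e}}}(\Theta_{\hat{e}} w_{\hat{e}})$. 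Taking the outer infimum over $(w_{\hat{e}})_{\hat{e} \in \hat{E}}$ yields exactly \eqref{eq:recursiveformula}. The points demanding care are this interchange of infima, which is legitimate precisely because the subtree blocks share no variables, and the height bound: any root-to-leaf path in $G^{\hat{e}}$ prolongs along $\hat{e}$ to such a path in $G$, so $\height(\Gc^{\hat{e}}_{\alpha^{\hat{e}}}) \leq h - 1$.
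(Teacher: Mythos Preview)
Your proposal is correct and follows essentially the same approach as the paper: both arguments partition the tree into the root node and its subtrees, define the subtree regularization graphs with inherited data, and then factor the infimum using the disjointness of the subtree edge variables. If anything, you are slightly more explicit than the paper in justifying the interchange of infima and in verifying the height bound $\height(\Gc^{\hat{e}}_{\alpha^{\hat{e}}}) \leq h-1$.
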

As first consequence of this recursive representation, we obtain an estimate between two regularization graph functionals corresponding to regularization graphs with different weights.
\begin{lemma} \label{lem:weight_equivalence} Let $\Gc_{\alpha^1}$ and $\Gc_{\alpha^2}$ be two regularization graphs with the same underlying graph structure $\Gc$ and directed graph $G = (V,E)$ with root node $\hat{n}$, and let $\alpha^1$, $\alpha^2$ be weights such that $\alpha^1_e \geq \alpha^2_e $ for all $e \in E$.  Then, with 
\begin{equation} \label{eq:constant_reg_graph_equivalent}
C_{\alpha^1,\alpha^2}:= \max \big \{ \prod_{e \in F} \frac{\alpha_e^2}{\alpha_e^1} \st F \subset E \text{ is either empty or a chain with $\hat{n}$ as root} \big \},
\end{equation} 
where we use the conventions $ \prod_{e \in \emptyset} \frac{\alpha_e^2}{\alpha_e^1} = 1$ and $\frac{0}{0}=0$, for the associated regularization graph functionals $R_{\alpha^1} = R(\Gc_{\alpha^1})$ and $R_{\alpha^2} = R(\Gc_{\alpha^2})$ it holds that
\[R_{\alpha^1}(u)  \leq C_{\alpha^1,\alpha^2} R_{\alpha^2} (u) \quad \text{for all }u \in X_{\hat{n}}.
\]
\begin{proof}
We prove the result by induction over the height $h$ of the graphs. Assume the result holds true for any two regularization graphs with height less than $h$.

Now note that, by assumption, $(\alpha^1_e =0)$ implies $(\alpha^2_e=0)$, so we can adapt the graph $G=(V,E)$ by removing all edges $e \in E$ with $\alpha^1_e = 0$ and all subsequently disconnected nodes, without increasing its height, changing $C_{\alpha^1,\alpha^2}$ or the values of the $R_{\alpha^i}$. Hence, without loss of generality, assume that $\alpha_e^1>0$ for all $e \in E$. 
Now for $h=0$ the result holds trivially and for $h \geq 1 $ we can use the recursive representation of Lemma \ref{lem:primal_graph_recursion} to obtain
\begin{align*}
 R_{\alpha^1}(u)  
 & = \inf\left\{ \Psi_{\hat{n}} \left(u-  \sum_{\hat{e} \in \hat{E}}  \Phi_{\hat{e}} w_{\hat{e}} \right) + \sum_{\hat{e} \in \hat{E}}   R_{(\alpha^1)^{\hat{e}}}^{\hat{e}}\left(\frac{\Theta_{\hat{e}} w_{\hat{e}}}{\alpha_{\hat{e}}^1}\right) \Biggst w_{\hat{e}} \in \domain(\Theta_{\hat{e}})  
 \text{ for all } \hat{e} \in \hat{E} 
 \right\} \\
&  \leq \inf \left\{ \Psi_{\hat{n}} \left(u-  \sum_{{\hat{e}}\in \hat{E}} \alpha_{\hat{e}}^2 \Phi_{\hat{e}} w_{\hat{e}} \right) + \sum_{\hat{e} \in \hat{E} }  \frac{\alpha_{\hat{e}}^2}{\alpha_{\hat{e}}^1} R_{(\alpha^1)^{\hat{e}}}^{\hat{e}}(\Theta_{\hat{e}} w_{\hat{e}}) \Biggst w_{\hat{e}} \in \domain(\Theta_{\hat{e}}) 
\text{ for all } \hat{e} \in \hat{E}
 \right\} \\
& \leq C_{\alpha^1,\alpha^2} R_{\alpha^2} (u),
\end{align*}
where in the first line we substituted $w_{\hat{e}}$ to $\alpha_{\hat{e}}^1 w_{\hat{e}}$ and in the second line  we substituted $\alpha_{\hat{e}}^2 w_{\hat{e}}$ to $w_{\hat{e}}$; additionally, in the first inequality 
we used that $R_{(\alpha^1)^{\hat{e}}}^{\hat{e}} (\lambda u) \leq \lambda R_{(\alpha^1)^{\hat{e}}}^{\hat{e}} (u)$ for $\lambda \in [0,1]$, see Proposition \ref{prop:convexineq}, and we obtained the last estimate from the induction hypothesis and the definition of $C_{\alpha^1,\alpha^2}$.
\end{proof}
\end{lemma}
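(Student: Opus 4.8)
The plan is to argue by induction on the height $h=\height(\Gc_{\alpha^1})=\height(\Gc_{\alpha^2})$, which is legitimate since the height does not depend on the weights. The two ingredients driving the argument are the recursive representation of Lemma~\ref{lem:primal_graph_recursion}, which peels off the edges $\hat{E}$ leaving the root $\hat{n}$ and expresses $R_\alpha$ through the subtree functionals $R^{\hat{e}}_{\alpha^{\hat{e}}}$, and the sublinearity estimate $R(\lambda u)\leq \lambda R(u)$ for $\lambda\in[0,1]$ from Proposition~\ref{prop:convexineq} (the case $\lambda=0$ being immediate from $R(0)=0$).

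Before the induction I would reduce to the case $\alpha^1_e>0$ for all $e$. Since $\alpha^1_e\geq\alpha^2_e\geq 0$, any edge with $\alpha^1_e=0$ also has $\alpha^2_e=0$; because the corresponding backward term then has coefficient $0$, the variables of the subtree hanging off such an edge are decoupled from the rest of the functional and, using $\Psi_n(0)=0$ and nonnegativity of the node functionals, can be set to zero without changing either $R_{\alpha^1}$ or $R_{\alpha^2}$. Moreover every chain through such an edge carries a factor $\tfrac{0}{0}=0$ and hence never realizes the maximum defining $C_{\alpha^1,\alpha^2}$, so deleting these edges and the nodes they disconnect leaves $C_{\alpha^1,\alpha^2}$ unchanged and does not increase the height. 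The base case $h=0$ is then trivial: for the empty graph both functionals equal $\Psi_{\hat{n}}$ and the only admissible $F$ is empty, so $C_{\alpha^1,\alpha^2}=1$.

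For $h\geq 1$, I would apply Lemma~\ref{lem:primal_graph_recursion} to $R_{\alpha^1}$ and then perform two changes of variables on the edge variables $w_{\hat{e}}$: first rescaling by $\alpha^1_{\hat{e}}>0$ so that the backward term becomes $\Phi_{\hat{e}}w_{\hat{e}}$ while the forward argument becomes $\Theta_{\hat{e}}w_{\hat{e}}/\alpha^1_{\hat{e}}$, and then substituting $w_{\hat{e}}\mapsto\alpha^2_{\hat{e}}w_{\hat{e}}$ and applying sublinearity with $\lambda=\tfrac{\alpha^2_{\hat{e}}}{\alpha^1_{\hat{e}}}\in[0,1]$ in each subtree term. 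This produces the bound
\[
R_{\alpha^1}(u)\leq\inf\Big\{\Psi_{\hat{n}}\big(u-\sum_{\hat{e}\in\hat{E}}\alpha^2_{\hat{e}}\Phi_{\hat{e}}w_{\hat{e}}\big)+\sum_{\hat{e}\in\hat{E}}\tfrac{\alpha^2_{\hat{e}}}{\alpha^1_{\hat{e}}}R^{\hat{e}}_{(\alpha^1)^{\hat{e}}}(\Theta_{\hat{e}}w_{\hat{e}})\ \Big|\ w_{\hat{e}}\in\domain(\Theta_{\hat{e}})\Big\}.
\]

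To finish I would invoke the induction hypothesis on each subtree, $R^{\hat{e}}_{(\alpha^1)^{\hat{e}}}\leq C_{(\alpha^1)^{\hat{e}},(\alpha^2)^{\hat{e}}}R^{\hat{e}}_{(\alpha^2)^{\hat{e}}}$, and exploit the combinatorial decomposition that a nonempty chain rooted at $\hat{n}$ is an initial edge $\hat{e}\in\hat{E}$ followed by a chain rooted at $\hat{n}^{\hat{e}}$, which yields
\[
C_{\alpha^1,\alpha^2}=\max\Big\{1,\ \max_{\hat{e}\in\hat{E}}\tfrac{\alpha^2_{\hat{e}}}{\alpha^1_{\hat{e}}}C_{(\alpha^1)^{\hat{e}},(\alpha^2)^{\hat{e}}}\Big\}.
\]
In particular $C_{\alpha^1,\alpha^2}\geq 1$ and $\tfrac{\alpha^2_{\hat{e}}}{\alpha^1_{\hat{e}}}C_{(\alpha^1)^{\hat{e}},(\alpha^2)^{\hat{e}}}\leq C_{\alpha^1,\alpha^2}$, so each subtree summand is dominated by $C_{\alpha^1,\alpha^2}R^{\hat{e}}_{(\alpha^2)^{\hat{e}}}(\Theta_{\hat{e}}w_{\hat{e}})$ and the nonnegative $\Psi_{\hat{n}}$-term by $C_{\alpha^1,\alpha^2}$ times itself; pulling the constant out of the infimum and recognizing the recursive representation of $R_{\alpha^2}$ gives $R_{\alpha^1}(u)\leq C_{\alpha^1,\alpha^2}R_{\alpha^2}(u)$. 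The main obstacle is not analytic---lower semicontinuity and coercivity play no role here---but the careful bookkeeping of $C_{\alpha^1,\alpha^2}$ through the recursion, in particular verifying the displayed decomposition and checking that the conventions $\prod_{\emptyset}=1$ and $\tfrac{0}{0}=0$ mesh with the degenerate-weight reduction.
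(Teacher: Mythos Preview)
Your proposal is correct and follows essentially the same approach as the paper's proof: induction on the height, reduction to the case $\alpha^1_e>0$, the two changes of variables in the recursive representation combined with the sublinearity estimate from Proposition~\ref{prop:convexineq}, and then the induction hypothesis on each subtree. The only difference is cosmetic: you spell out the recursive decomposition $C_{\alpha^1,\alpha^2}=\max\bigl\{1,\ \max_{\hat{e}\in\hat{E}}\tfrac{\alpha^2_{\hat{e}}}{\alpha^1_{\hat{e}}}C_{(\alpha^1)^{\hat{e}},(\alpha^2)^{\hat{e}}}\bigr\}$ explicitly, whereas the paper leaves this implicit in the phrase ``from the definition of $C_{\alpha^1,\alpha^2}$.''
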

\begin{rem} It is easy to see from the proof above that, whenever $\Psi_{n}$ for some $n \in V$ is positive one-homogeneous, the assumption $\alpha^1_e \geq \alpha^2_e $ for $e = (n^-,n)$ can be replaced by $(\alpha^1_e =0)$ implying $(\alpha^2_e=0) $. In particular, if $(\alpha^1_e =0)$ if and only if $(\alpha^2_e=0) $ for all $e= (n^-,n)$ such that $\Psi_{n}$ is positive one-homogeneous and all other $\alpha^1_e$ and $\alpha^2_e$ coincide, then $R_{\alpha^1}$ and $R_{\alpha^2}$ are equivalent, i.e., $R_{\alpha^1}$ can be estimated from above and below by a constant times $R_{\alpha^2}$, $\domain(R_{\alpha^1} ) = \domain(R_{\alpha^2} )  $ and also their zero-sets coincide.
\end{rem}

\subsection{Combining regularization graphs}
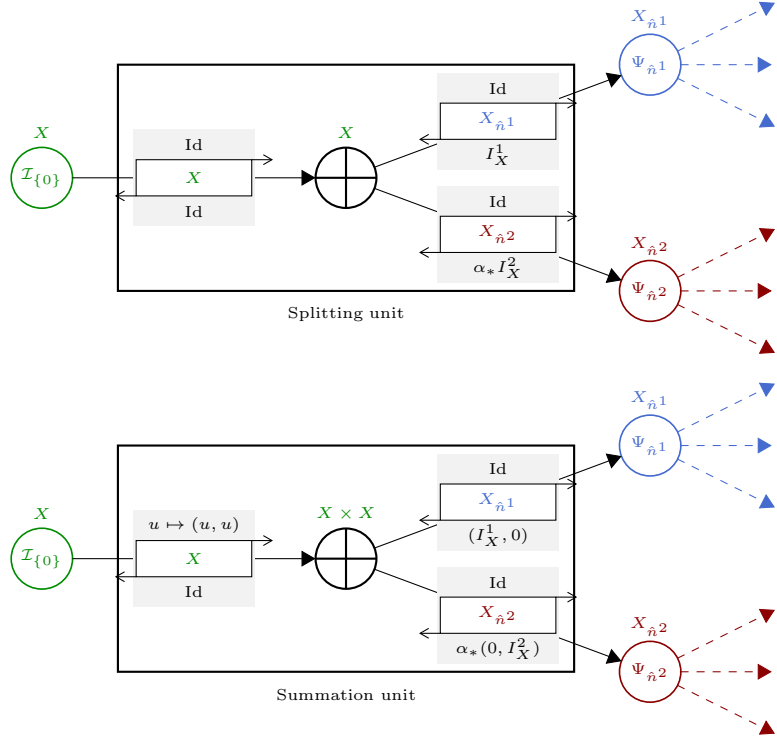
\begin{figure}[t]
\centering
\begin{tikzpicture}[rgraph]

\draw[thick] (0.25,0.75) rectangle  (1.75,-0.75);
\node at (1,-0.9) {\tiny Splitting unit};

\draw[darkgreen] (0,0) pic (p0) {node={$X$,$\Ic_{\{0\}}$}};
\draw (1,0) pic (p1) {node={$\color{darkgreen}{X}$}};

\draw[hanblue] (2,0.75) pic (p2) {node={$X_{\hat{n}^1}$,$\Psi_{\hat{n}^1}$}};
\draw[darkred] (2,-0.75) pic (p3) {node={$X_{\hat{n}^2}$,$\Psi_{\hat{n}^2}$}};

\draw[edge] (p0) -- pic{edge={$\Id$,$\color{darkgreen}{X}$,$\Id$}} (p1);
\draw[edge] (p1) -- pic{edge={$\emb^1_X$,$\color{hanblue}{X_{\hat{n}^1}}$,$\Id$}} (p2);
\draw[edge] (p1) -- pic{edge={$\alpha_*\emb^2_X$,$\color{darkred}{X_{\hat{n}^2}}$,$\Id$}} (p3);

\foreach \i in {-0.5,0,0.5}
{
	\draw[edge,dashed,hanblue] (p2) -- (2.5,0.75+\i);

	\draw[edge,dashed,darkred] (p3) -- (2.5,-0.75+\i);
}

\end{tikzpicture}
\begin{tikzpicture}[rgraph]

\draw[thick] (0.25,0.75) rectangle  (1.75,-0.75);
\node at (1,-0.9) {\tiny Summation unit};

\draw[darkgreen] (0,0) pic (p0) {node={$X$,$\Ic_{\{0\}}$}};
\draw (1,0) pic (p1) {node={$\color{darkgreen}{X \times X}$}};

\draw[hanblue] (2,0.75) pic (p2) {node={$X_{\hat{n}^1}$,$\Psi_{\hat{n}^1}$}};
\draw[darkred] (2,-0.75) pic (p3) {node={$X_{\hat{n}^2}$,$\Psi_{\hat{n}^2}$}};

\draw[edge] (p0) -- pic{edge={$\Id$,$\color{darkgreen}{X}$,$u \mapsto (u\comma u)$}} (p1);
\draw[edge] (p1) -- pic{edge={$(\emb^1_X \comma 0)$,$\color{hanblue}{X_{\hat{n}^1}}$,$\Id$}} (p2);
\draw[edge] (p1) -- pic{edge={$\alpha_*(0\comma \emb^2_X )$,$\color{darkred}{X_{\hat{n}^2}}$,$\Id$}} (p3);

\foreach \i in {-0.5,0,0.5}
{
	\draw[edge,dashed,hanblue] (p2) -- (2.5,0.75+\i);

	\draw[edge,dashed,darkred] (p3) -- (2.5,-0.75+\i);
}

\end{tikzpicture}
\caption{\label{fig:summation_splitting_units}
Combining regularization graphs via infimal convolution (top) and summation (bottom). }
\end{figure}

Obviously, for $R_{\alpha} = R(\Gc_{ \alpha})$ being a regularization graph funtional and $\lambda>0$, also $\lambda R_\alpha $ is a regularization graph functional (corresponding to an adaption of the regularization graph $\Gc_\alpha$ where all node functionals $\Psi_n$ are replaced by $\lambda \Psi_n$). In this subsection we show that also the sum and infimal-convolution of two regularization graph functionals are again regularization graph functionals. 
\begin{prop}[Infimal-convolution and sum of regularization graphs] \label{prop:infcon_sum_reggraphs}
Let $\Gc^1_{\alpha^1}= (\Gc^1,\alpha^1)$ and $\Gc^2_{\alpha^2}= (\Gc^2,\alpha^2)$ be two regularization graphs with associated directed graphs $G^1 = (V^1,E^1)$ and $G^2 = (V^2,E^2)$ and root nodes $\hat{n}^1$ and $\hat{n}^2$, respectively. Let $X$ be a Banach space admitting a predual such that bounded sequences in $X$ admit weak* convergent subsequences and such that the embeddings $\emb^1_{X} :X_{\hat{n}^1} \rightarrow X$ and $\emb^2_{X} :X_{\hat{n}^2} \rightarrow X$ are continuous w.r.t. weak* convergence. 

For additional nodes $\hat{n}, \hat{n}^0 \notin V^1 \cup V^2$ define the combined graph $G = (V,E)$ with
\[ V = \{\hat{n},\hat{n}^0 \} \cup V^1 \cup V^2,  \quad E = \{ (\hat{n},\hat{n}^0),(\hat{n}^0,\hat{n}^1),(\hat{n}^0,\hat{n}^2) \} \cup E^1 \cup E^2.
\]
Then, combined regularization graphs can be constructed as follows.
\begin{itemize}
\item \textbf{Infimal-convolution: } For the additional nodes and edges in $G$, define the spaces
\[ X_{\hat{n}}=X_{\hat{n}^0} = X, \quad  X_{(\hat{n},\hat{n}^0)} = X, \quad X_{(\hat{n}^0,\hat{n}^l)} =X_{\hat{n}^l}, \quad l=1,2,\]
the operators 
\[\Theta_{(\hat{n},\hat{n}^0)} = \Theta_{(\hat{n}^0,\hat{n}^1)} = \Theta_{(\hat{n}^0,\hat{n}^2)} = \Id, \, \quad \Phi_{(\hat{n},\hat{n}^0)} = \Id, \,  \quad \Phi_{(\hat{n}^0,\hat{n}^1)} = \emb^1_{X},\quad  \Phi_{(\hat{n}^0,\hat{n}^2)} = \emb^2_X,\]
and the functionals and weights
\[ \Psi_{\hat{n}}= \Psi_{\hat{n}^0}= \Ic_{\{ 0 \}} , \quad \alpha_{(\hat{n},\hat{n}^0)} = \alpha_{(\hat{n}^0,\hat{n}^1)} = 1, \quad \alpha_{(\hat{n}^0,\hat{n}^2)}  = \alpha_* \in [0,\infty)  ,\]
and adopt the elements of $\Gc^1_{\alpha^1}$ and $\Gc^2_{\alpha^2}$ for all other nodes and edges.
Then, the associated structure $\Gc_\alpha = (\Gc,\alpha)$ defines a regularization graph and, for $R_\alpha = R(\Gc_\alpha)$, $R^1_{\alpha^1} = R(\Gc^1_{\alpha^1})$ and $R^2_{\alpha^2} = R(\Gc^2_{\alpha^2})$, it holds that
\[
R_\alpha(u) = \inf_{v \in X_{\hat n}} R^1_{\alpha^1}(u-\alpha_* v) +  R^2_{\alpha^2}(v).
\]
\item \textbf{Summation: } For the additional nodes and edges in $G$, define the spaces
\[ X_{\hat{n}}=X , \,  \quad X_{\hat{n}^0} = X \times X, \, \quad X_{(\hat{n},\hat{n}^0)}=X,\, \quad X_{(\hat{n}^0,\hat{n}^l)} =X _{\hat{n}^l}, \quad  l=1,2,\]
the operators 
\begin{align*}
& \Theta_{(\hat{n},\hat{n}^0)}=[u \mapsto (u,u)],\quad \Theta_{(\hat{n}^0,\hat{n}^1)} =  \Theta_{(\hat{n}^0,\hat{n}^2)} = \Id, \\
&  \Phi_{(\hat{n},\hat{n}^0)}   = \Id, \quad  \Phi_{(\hat{n}^0,\hat{n}^1)} = [u \mapsto (\emb^1_{X} u,0)], \quad \Phi_{(\hat{n}^0,\hat{n}^2)} = [u \mapsto (0,\emb^2_{X} u)],
\end{align*}
and the functionals and weights
\[ \Psi_{\hat{n}}= \Psi_{\hat{n}^0}= \Ic_{\{ 0 \}} , \quad \alpha_{(\hat{n},\hat{n}^0)} = \alpha_{(\hat{n}^0,\hat{n}^1)} = 1, \quad \alpha_{(\hat{n}^0,\hat{n}^2)}  = \alpha_* \in (0,\infty)  ,\]
and adopt the elements of $\Gc^1_{\alpha^1}$ and $\Gc^2_{\alpha^2}$ for all other nodes and edges.
Then, the associated structure $\Gc_\alpha = (\Gc,\alpha)$ defines a regularization graph and, for $R_\alpha = R(\Gc_\alpha)$, $R^1_{\alpha^1} = R(\Gc^1_{\alpha^1})$ and $R^2_{\alpha^2} = R(\Gc^2_{\alpha^2})$, it holds that
\[
R_\alpha(u) = R^1_{\alpha^1}(u) +  R^2_{\alpha^2}(\alpha_*^{-1} u).
\]
\end{itemize}

\begin{proof}It is easy to see that all spaces, functionals, operators and weights involved in the definition of $\Gc_\alpha$ fulfill Assumptions \ref{ass:lsc_functional} to \ref{ass:weak_star_compactness},  such that $\Gc_\alpha$ defines a regularization graph. 
Denote the edges $e^l= (\hat{n},\hat{n}^l)$ for $l\in \{ 0,1,2\} $.
For the claimed representation of $R_\alpha$ in case of the infimal-convolution, we observe that
\begin{multline*}
  R_\alpha(u) = \inf \bigg\{
  \Ic_{\{0\}} (u- w_{e^0}) + \Ic_{\{0\}} (w_{e^0} - w_{e^1} - \alpha_* w_{e^2}) +  \sum_{l=1}^2 \sum_{ n^l \in V^l}  \Psi_{n^l}(v_{n^l})    \Bigst
 \\ 
\forall l \in \{1,2\}, n^l \in V^l\setminus\{\hat{n}^l\}:  v_{n^l} =    \Theta_{((n^l)^-,n^l)}w_{((n^l)^-,n^l)} - \sum_{(n^l,m) \in E^l} \alpha_{(n^l,m)}  \Phi_{(n^l,m)} w_{(n^l,m)},  \\
  v_{\hat{n}^l} =    w_{e^l} - \sum_{(\hat{n}^l,m) \in E^l} \alpha_{(\hat{n}^l,m)}  \Phi_{(\hat{n}^l,m)} w_{(\hat{n}^l,m)} , \forall e \in E : w_e \in \domain(\Theta_e)
\bigg\}  \\
= \inf \bigg\{
 \sum_{l=1}^2 \sum_{ n^l \in V^l}  \Psi_{n^l}(v_{n^l})    \Bigst   w_{e^1} + \alpha_* w_{e^2}= u , \forall l \in \{1,2\}, n^l \in V^l\setminus\{\hat{n}^l\}:
 \\
  v_{n^l} =    \Theta_{((n^l)^-,n^l)}w_{((n^l)^-,n^l)} - \sum_{(n^l,m) \in E^l} \alpha_{(n^l,m)}  \Phi_{(n^l,m)} w_{(n^l,m)},  \\
  v_{\hat{n}^l} =    w_{e^l} - \sum_{(\hat{n}^l,m) \in E^l} \alpha_{(\hat{n}^l,m)}  \Phi_{(\hat{n}^l,m)} w_{(\hat{n}^l,m)} , \forall e \in E : w_e \in \domain(\Theta_e)
\bigg\} \\
= \inf_{v \in X_{\hat n}} R^1_{\alpha^1}(u-\alpha_* v) +  R^2_{\alpha^2}(v).
 \end{multline*}

Likewise, for the claimed representation of $R_\alpha$ in case of summation, we observe that
\begin{multline*}
  R_\alpha(u) = \inf \bigg\{
  \Ic_{\{0\}} (u- w_{e^0}) + \Ic_{\{(0,0)\}} ((w_{e^0}- w_{e^1},w_{e^0}- \alpha_* w_{e^2})) +  \sum_{l=1}^2 \sum_{ n^l \in V^l}  \Psi_{n^l}(v_{n^l})    \Bigst  
 \\
\forall l \in \{1,2\}, n^l \in V^l\setminus\{\hat{n}^l\}:  v_{n^l} =    \Theta_{((n^l)^-,n^l)}w_{((n^l)^-,n^l)} - \sum_{(n^l,m) \in E^l} \alpha_{(n^l,m)}  \Phi_{(n^l,m)} w_{(n^l,m)},  \\
  v_{\hat{n}^l} =    w_{e^l} - \sum_{(\hat{n}^l,m) \in E^l} \alpha_{(\hat{n}^l,m)}  \Phi_{(\hat{n}^l,m)} w_{(\hat{n}^l,m)} , \forall e \in E : w_e \in \domain(\Theta_e)
\bigg\}  \\
= \inf \bigg\{
 \sum_{l=1}^2 \sum_{ n^l \in V^l}  \Psi_{n^l}(v_{n^l})    \Bigst   w_{e^1} = u,  w_{e^2} =  \alpha_*^{-1}u , \forall l \in \{1,2\}, n^l \in V^l\setminus\{\hat{n}^l\}:
 \\
  v_{n^l} =    \Theta_{((n^l)^-,n^l)}w_{((n^l)^-,n^l)} - \sum_{(n^l,m) \in E^l} \alpha_{(n^l,m)}  \Phi_{(n^l,m)} w_{(n^l,m)},  \\
  v_{\hat{n}^l} =    w_{e^l} - \sum_{(\hat{n}^l,m) \in E^l} \alpha_{(\hat{n}^l,m)}  \Phi_{(\hat{n}^l,m)} w_{(\hat{n}^l,m)} , \forall e \in E : w_e \in \domain(\Theta_e)
\bigg\} \\
= R^1_{\alpha^1}(u) +  R^2_{\alpha^2}(\alpha_*^{-1}u).
 \end{multline*} 
\end{proof}
\end{prop}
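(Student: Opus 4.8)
The plan is to handle both constructions at once: first confirm that the augmented tuple $\Gc_\alpha$ is a genuine regularization graph by checking Hypotheses \ref{ass:lsc_functional}--\ref{ass:weak_star_compactness} for the newly added data, and then evaluate $R_\alpha$ directly from Definition \ref{def:reg_graph} and collapse the two extra nodes.

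First I would dispatch the well-posedness of $\Gc_\alpha$. The only genuinely new node functionals are the indicators $\Ic_{\{0\}}$ on $X$ and $\Ic_{\{(0,0)\}}$ on $X \times X$; each is convex, vanishes at $0$, has a weak* closed singleton as effective domain (hence is weak* lower semicontinuous, verifying \ref{ass:lsc_functional} and \ref{ass:psi_0_is_0}), and is coercive because $\|v\| \to \infty$ eventually leaves the singleton and produces the value $+\infty$, verifying \ref{ass:coerc_functional}. The new forward operators $\Id$ and $u \mapsto (u,u)$ are bounded and everywhere defined with trivial kernel, so \ref{ass:ker_fwd_operator} holds, they are weak*-to-weak* continuous and thus weak* closed (\ref{ass:closed_fwd_operator}), and \eqref{eq:poincinassum} holds with $P_{\ker} = 0$ and $C = 1$, verifying \ref{ass:coerc_fwd_operator}. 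The new backward operators are assembled from $\Id$ and the embeddings $\emb^1_X, \emb^2_X$ (and their coordinate inclusions), all weak*-to-weak* continuous by the hypothesis on $X$, giving \ref{ass:cont_bkw_operator}; finally \ref{ass:weak_star_compactness} for $X$ and $X \times X$ is inherited from the weak* sequential compactness assumed for $X$. Since $\Gc^1_{\alpha^1}$ and $\Gc^2_{\alpha^2}$ already satisfy all hypotheses, so does $\Gc_\alpha$.

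Next I would expand $R_\alpha$ via \eqref{defregularizer} over $V = \{\hat n, \hat n^0\} \cup V^1 \cup V^2$. Writing $e^0, e^1, e^2$ for the three new edges, the nodes $\hat n$ and $\hat n^0$ contribute precisely the two indicator terms, while each node of $V^l$ keeps its original summand, the subtree root $\hat n^l$ now receiving the incoming value $\Theta_{e^l} w_{e^l} = w_{e^l}$ (as $\Theta_{e^l} = \Id$). The key observation is that, with $w_{e^1}, w_{e^2}$ held fixed, the partial infimum over the internal edge variables of the $l$-th subtree reproduces exactly $R^l_{\alpha^l}(w_{e^l})$: this matches the convention $\Theta_{((\hat n^l)^-, \hat n^l)} = \Id$, $w_{\cdot} = z$ used in Definition \ref{def:reg_graph} to define $R^l_{\alpha^l}$. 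Because all node functionals take values in $[0,\infty]$, the joint infimum factorizes without $\infty - \infty$ ambiguities into an infimum over the coupling variables $w_{e^0}, w_{e^1}, w_{e^2}$ of $R^1_{\alpha^1}(w_{e^1}) + R^2_{\alpha^2}(w_{e^2})$ subject to the constraints imposed by the indicators.

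It then remains to read off these constraints. For the infimal-convolution graph the indicators force $u = w_{e^0}$ and $w_{e^0} = \emb^1_X w_{e^1} + \alpha_* \emb^2_X w_{e^2}$ in $X$; interpreting the subtree functionals as functionals on $X$ through the (injective) embeddings $\emb^l_X$ and substituting $v = \emb^2_X w_{e^2}$ yields $R_\alpha(u) = \inf_{v \in X_{\hat n}} R^1_{\alpha^1}(u - \alpha_* v) + R^2_{\alpha^2}(v)$. For the summation graph the vector-valued indicator applied to $(w_{e^0} - \emb^1_X w_{e^1}, w_{e^0} - \alpha_* \emb^2_X w_{e^2})$ separates into $\emb^1_X w_{e^1} = w_{e^0} = u$ and $\alpha_* \emb^2_X w_{e^2} = w_{e^0} = u$, giving $R_\alpha(u) = R^1_{\alpha^1}(u) + R^2_{\alpha^2}(\alpha_*^{-1} u)$. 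The whole argument is essentially bookkeeping; the only point demanding care --- and the mild obstacle I anticipate --- is the factorization of the joint infimum together with the tacit identification of the subtree root spaces inside $X$ via $\emb^1_X, \emb^2_X$, which is what makes the right-hand sides well defined.
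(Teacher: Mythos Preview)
Your proposal is correct and follows essentially the same approach as the paper: verify that the augmented data satisfy \ref{ass:lsc_functional}--\ref{ass:weak_star_compactness}, expand $R_\alpha$ from Definition~\ref{def:reg_graph}, eliminate $w_{e^0}$ and read off the constraints imposed by the indicator functionals, and recognize the remaining subtree infima as $R^l_{\alpha^l}(w_{e^l})$. The paper treats the hypothesis verification in a single sentence while you spell it out, and the paper also leaves the identification of $X_{\hat n^l}$ inside $X$ via $\emb^l_X$ implicit, so your flagged ``mild obstacle'' is no more of a gap in your argument than it is in theirs.
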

More generally, note that any regularization graph can be extended by appending another regularization graph to one of its leaves, and in particular by appending a regularization graph corresponding to the infimal convolution or the sum of two other regularization graph functionals. The latter can be achieved by appending a splitting or summation unit as in Figure \ref{fig:summation_splitting_units} to a leaf-node, where the $\Ic_{\{0\}}$ and $X$ in the left, green nodes in Figure \ref{fig:summation_splitting_units} are replaced by the corresponding node functional and node space of the leaf node.

\begin{rem}[Assumptions on the sum of two regularization graphs]
The notion of regularization graphs was designed mainly for an infimal-convolution-type combination of functionals and operators, since we believe this situation is more interesting in practice. For infimal-convolution-type combinations, we believe that our assumptions on the underlying functionals and operators are rather minimal. Our framework also allows for the summation of two functionals, but in this situation our assumptions are suboptimal. Indeed, one would expect that in a summation, only one of the two functionals needs to fulfill the assumptions of a regularization graph in order to provide a suitable regularization strategy.
Indeed, when using the sum of two (suitable) functionals for regularization, generically, only one of them needs to fulfill coercivity properties (such as \ref{ass:coerc_functional} together with \ref{ass:coerc_fwd_operator}) in order to obtain well-posedness results for linear inverse problems.
Nevertheless, we do not further generalize our framework towards weakening the assumptions for the sum of two functionals since i) we believe this situation is less relevant and ii) this would significantly complicate our basic assumptions and results for instance on convergence for vanishing noise and bilevel optimization, thereby hindering our main goal of providing an easily applicable framework.
\end{rem}

\section{Analytic properties of regularization graphs}\label{sec:analytic_properties}
The goal of this section is to obtain analytic properties of regularization graph functionals that provide the basis for well-posedness results for the regularization of inverse problems.  To this aim, we first consider lower semi-continuity and coercivity properties, for which we need a general lemma that deals with projections.

For the lemma, remember that for a Banach space $X$ and a finite dimensional subspace $L$ of $X$, there always exists a bounded linear projection $P:X \rightarrow L$.
\begin{lemma}\label{lem:projection_transfer} Let $X$ be a Banach space, $R:X \rightarrow [0,\infty]$ be a functional, $L \subset X$ be a finite dimensional subspace and assume there is a function $G:X \rightarrow L$ and $C >0,D\geq 0$ such that
\[ \| u - G(u)\|_X   \leq C R(u) +D\]
for all $u \in X$. Then, for a closed subspace $K \subset X$ and a bounded, linear projection $P_{K\cap L} :X \rightarrow K \cap L$ there exist constants $\tilde{C}>0 ,\tilde{D} \geq 0$, with $\tilde{D}=0$ in case $D=0$, such that for all $u \in K$,
\begin{equation}\label{eq:eest}
\| u - P_{K\cap L}u \|_X \leq \tilde{C} R(u) + \tilde{D}.
\end{equation}
In particular, if $K = X$, this holds for any bounded linear projection onto $L$. 
\begin{proof}
Assume this does not hold true, then we can pick a sequence $(u^k)_k$ in $K$ such that
\[ \| u^k - P_{K\cap L} u^k \|_X  > k R(u^k) + kD/C \]
for each $k$, with $C,D$ being the constants of the original estimate. This implies in particular that $\| u^k - P_{K\cap L} u^k \|_X > 0$ for each $k$. 
Defining $\tilde{u}^k = (u^k - P_{K\cap L} u^k) / \|u^k - P_{K\cap L} u^k\|_X$ such that $\|\tilde{u}^k \|_X = 1$, we can estimate 
\[   C/k  \geq C \frac{R(u^k) + D/C}{\|u^k - P_{K\cap L} u^k\|_X }\geq \frac{\|u^k - G(u^k)  \|_X}{\|u^k - P_{K\cap L} u^k\|_X} = \|\tilde{u}^k - \frac{G(u^k)- P_{K\cap L}u^k}{\|u^k - P_{K\cap L} u^k\|_X} \|_X
 .\]
In particular, this implies that $( (G(u^k)- P_{K\cap L}u^k)/\|u^k - P_{K\cap L} u^k\|_X)_k$ is bounded and, by finite dimensionality, admits a (non-relabeled) subsequence strongly converging to some $z \in L$. Consequently, also $(\tilde{u}^k)_k$ converges strongly to $z$ and from closedness of $K$ we get that $z \in K$. Hence
\[ 0 = (\Id- P_{K\cap L} )z = \lim_k(\Id- P_{K\cap L} )\tilde{u}^k = \lim_k \tilde{u}^k \]
strongly, which is a contradiction to $\|\tilde{u}^k\|_X = 1$ for each $k$ and concludes the proof of the coercivity estimate in the general form. Also, it can be seen from this argument that we can choose $\tilde{D}=0$ in case $D=0$, which completes the proof.
\end{proof}
\end{lemma}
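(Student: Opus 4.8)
The plan is to argue by contradiction, exploiting the finite dimensionality of $L$ through a normalization-and-compactness argument. The real obstruction to overcome is that the hypothesis only supplies an \emph{arbitrary} (possibly nonlinear) map $G$ into $L$, whereas the conclusion demands the \emph{linear} projection $P_{K\cap L}$ onto the smaller space $K\cap L$; the whole point is to bridge these two.

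First I would suppose that \eqref{eq:eest} fails for every choice of constants. Specializing to $\tilde C = k$ and $\tilde D = kD/C$, this produces for each $k$ a point $u^k \in K$ with
\[ \|u^k - P_{K\cap L}u^k\|_X > k R(u^k) + kD/C. \]
In particular the left-hand side is strictly positive, so I can normalize and set $\tilde u^k = (u^k - P_{K\cap L}u^k)/\|u^k - P_{K\cap L}u^k\|_X$, which has unit norm and, since $P_{K\cap L}u^k \in K\cap L \subset K$, lies in the closed subspace $K$.

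Next I would feed the original hypothesis into this. Dividing the displayed inequality by $\|u^k - P_{K\cap L}u^k\|_X$ and rearranging gives $(C R(u^k)+D)/\|u^k - P_{K\cap L}u^k\|_X < C/k$, whence, using $\|u^k - G(u^k)\|_X \le C R(u^k)+D$,
\[ \Bigl\| \tilde u^k - \frac{G(u^k) - P_{K\cap L}u^k}{\|u^k - P_{K\cap L}u^k\|_X} \Bigr\|_X \le \frac{C}{k} \to 0. \]
The key observation is that the subtracted vector $z^k := (G(u^k) - P_{K\cap L}u^k)/\|u^k - P_{K\cap L}u^k\|_X$ lies in $L$, being a combination of $G(u^k)\in L$ and $P_{K\cap L}u^k \in K\cap L \subset L$. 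Since $\|\tilde u^k\|_X = 1$, the sequence $(z^k)$ is bounded in the finite-dimensional space $L$, so a subsequence converges strongly to some $z \in L$; along it $\tilde u^k \to z$ as well, and closedness of $K$ then forces $z \in K$, hence $z \in K\cap L$.

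The contradiction comes from the disagreement between $\Id$ and $P_{K\cap L}$ on $K\cap L$ versus on the $\tilde u^k$. On one hand $P_{K\cap L}\tilde u^k = 0$ because $P_{K\cap L}$ is idempotent, so $(\Id - P_{K\cap L})\tilde u^k = \tilde u^k$; on the other hand $z \in K\cap L$ gives $(\Id - P_{K\cap L})z = 0$. Passing to the limit via continuity of $P_{K\cap L}$ yields $0 = (\Id - P_{K\cap L})z = \lim_k (\Id - P_{K\cap L})\tilde u^k = \lim_k \tilde u^k = z$, contradicting $\|\tilde u^k\|_X = 1$. The hard part is really this final bookkeeping: ensuring the limit lands in $K\cap L$ (which needs both finite dimensionality of $L$ and closedness of $K$) so that the projection acts as the identity there while annihilating the $\tilde u^k$. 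Finally, when $D = 0$ the term $kD/C$ vanishes throughout the argument, so the same contradiction is reached with $\tilde D = 0$, giving the claimed refinement; the statement for $K = X$ is the special case $K\cap L = L$.
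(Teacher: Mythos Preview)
Your proof is correct and follows essentially the same approach as the paper's: contradiction via the same choice of violating sequence, the same normalization $\tilde u^k$, the same bounded-in-$L$ auxiliary sequence $(G(u^k)-P_{K\cap L}u^k)/\|u^k-P_{K\cap L}u^k\|_X$, and the same compactness-and-projection contradiction. If anything, you are slightly more explicit about why $\tilde u^k\in K$ and why $P_{K\cap L}\tilde u^k=0$, but the argument is the same.
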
 

The following lemma provides a standard lower semi-continuity and compactness result. Since it will be used frequently in the paper, we provide its proof for the sake of completeness.

\begin{lemma}\label{lem:general_existence}
Let $X,W$ and $Y$ be Banach spaces such that bounded sequences in $X$ and $W$ admit weak*-convergent subsequences. For $\Theta:\domain(\Theta) \subset X \rightarrow W$ a weak* to weak* closed operator and $F:W \rightarrow [0,\infty)$ convex and weak* lower semi-continuous, suppose that:
\begin{itemize}
\item[i)] There exists a finite dimensional subspace $L \subset W$ such that $F(u+v) = F(u)$ for all $u\in W$, $v \in L$, a continuous, linear projection $P_{L}:W \rightarrow L$ and $C>0$, $D\geq 0$ such that 
\begin{equation}\label{eq:poincforlemma}
\|z - P_{L}z\|_X  \leq C F(z)+D \qquad \forall z \in W.
\end{equation}
\item[ii)] The space $\ker(\Theta)$ is finite dimensional and there exists a continuous projection $P_{\ker(\Theta)}:X \rightarrow \ker(\Theta)$ and $B>0$ such that 
\begin{equation}\label{eq:coerctheta}
\|u - P_{\ker(\Theta)}u\|_X \leq B \|\Theta u \|_W \qquad \forall u \in \domain(\Theta) \subset  X.
\end{equation}
\end{itemize}
Then, defining $F \circ \Theta:X \rightarrow [0,\infty]$ as 
\[
(F \circ \Theta)(u) =
\begin{cases}
F(\Theta u) & \text{if } u \in \domain(\Theta),\\
\infty &\text{else,}
\end{cases}
\]
implies that $F\circ \Theta$ is weak* lower semi-continuous.

Further, for $K: X \rightarrow Y$ a linear, bounded operator, for any sequence $(w^k)_k$ in $X$ such that both $(Kw^k)_k$ and $((F\circ \Theta) (w^k))_k$ are bounded, there exists a sequence $(\tilde{w}^k)_k$ such that $w^k - \tilde{w}^k \in \ker(K) \cap \Theta^{-1}( L) $ and both $(\tilde{w}^k)_k$ and $(\Theta \tilde{w}^k)_k$ are bounded.

\end{lemma}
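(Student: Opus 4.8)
Both assertions rest on the same normalization device that exploits the finite dimensionality of $\ker(\Theta)$ and of $L$ together with the weak* closedness of $\Theta$ and the closed-range estimate \eqref{eq:coerctheta}. I would first isolate a boundedness sublemma, then deduce lower semicontinuity, and finally build the sequence $(\tilde w^k)_k$ from a coercivity-up-to-a-finite-dimensional-space estimate.

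\textbf{Boundedness sublemma.} The key mechanism I would record first is: \emph{if $(v^k)_k$ is bounded in $X$ and $(F(\Theta v^k))_k$ is bounded, then $(\Theta v^k)_k$ is bounded in $W$.} Indeed, \eqref{eq:poincforlemma} bounds the $L$-complement $\Theta v^k - P_L\Theta v^k$, so if $\|\Theta v^k\|_W\to\infty$ along a subsequence then necessarily $t_k:=\|P_L\Theta v^k\|_W\to\infty$. Dividing by $t_k$, the vectors $P_L\Theta v^k/t_k$ lie on the unit sphere of the finite-dimensional space $L$ and hence (up to a subsequence) converge to some $\ell\in L$ with $\|\ell\|_W=1$, while $(\Theta v^k-P_L\Theta v^k)/t_k\to 0$; thus $\Theta(v^k/t_k)\to\ell$ strongly. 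Since $v^k/t_k\to 0$ strongly, weak* closedness of $\Theta$ forces $\ell=\Theta 0=0$, a contradiction.

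\textbf{Lower semicontinuity.} Given $u^k\weakstar u$, I would pass to a subsequence realizing $\liminf_k(F\circ\Theta)(u^k)$; if this value is $+\infty$ there is nothing to prove, otherwise $(F(\Theta u^k))_k$ is bounded. As a weak* convergent sequence $(u^k)_k$ is bounded, the sublemma yields boundedness of $(\Theta u^k)_k$, so along a further subsequence $\Theta u^k\weakstar g$. Weak* closedness of $\Theta$ gives $u\in\domain(\Theta)$ with $\Theta u=g$, and weak* lower semicontinuity of $F$ gives $(F\circ\Theta)(u)=F(g)\le\liminf_k F(\Theta u^k)$, proving the claim.

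\textbf{Construction of $(\tilde w^k)_k$ and the main obstacle.} Set $Z:=\Theta^{-1}(L)=\{w\in\domain(\Theta):\Theta w\in L\}$; since $\Theta$ maps $Z$ into the finite-dimensional space $L$ with finite-dimensional kernel, $Z$ is finite-dimensional, and so is $N:=\ker(K)\cap Z$, which therefore admits a bounded projection $P_N$. I would then establish, by the same contradiction/normalization scheme as in Lemma \ref{lem:projection_transfer}, the estimate
\[
\|w-P_N w\|_X \le \hat C\big((F\circ\Theta)(w)+\|Kw\|_Y\big)+\hat D \qquad (w\in X).
\]
A violating sequence, normalized as $v^k=(w^k-P_N w^k)/s_k$ with $s_k\to\infty$, satisfies $\|v^k\|_X=1$, $P_N v^k=0$, $Kv^k\to 0$ (because $P_N w^k\in\ker K$) and $\Theta v^k-P_L\Theta v^k\to 0$ (because $\Theta P_N w^k\in L$ and \eqref{eq:poincforlemma}). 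Splitting according to whether $\|P_L\Theta v^k\|_W$ stays bounded or not, the two normalization arguments of the sublemma, combined with \eqref{eq:coerctheta} to upgrade weak* to \emph{strong} convergence of $(v^k)_k$, produce a strong limit $v$ with $\|v\|_X=1$, $\Theta v\in L$ and $Kv=0$, i.e. $v\in N$; but then $P_N v=\lim_k P_N v^k=0$ forces $v=0$, a contradiction. With this estimate, I set $\tilde w^k:=w^k-P_N w^k$: then $w^k-\tilde w^k\in N=\ker(K)\cap\Theta^{-1}(L)$, the sequence $(\tilde w^k)_k$ is bounded since $(Kw^k)_k$ and $(F(\Theta w^k))_k$ are, and $F(\Theta\tilde w^k)=F(\Theta w^k-\Theta P_N w^k)=F(\Theta w^k)$ is bounded by the $L$-invariance of $F$, so the sublemma applied to $(\tilde w^k)_k$ yields boundedness of $(\Theta\tilde w^k)_k$. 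The \textbf{main obstacle} is precisely what these normalizations address: since $F$ is only coercive modulo the finite-dimensional $L$, neither boundedness of the relevant sequence nor of $F\circ\Theta$ controls $\|\Theta(\cdot)\|_W$ directly — the component $P_L\Theta(\cdot)$ could a priori blow up — and only the interplay of finite dimensionality of $L$, $\ker(\Theta)$ and $Z$ with weak* closedness of $\Theta$ and \eqref{eq:coerctheta} rules this out.
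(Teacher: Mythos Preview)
Your argument is correct, but it follows a genuinely different route from the paper's.

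The paper never proves your boundedness sublemma. Instead it proceeds constructively: using Lemma~\ref{lem:projection_transfer} and closedness of $\rg(\Theta)$ it first transfers the coercivity estimate~\eqref{eq:poincforlemma} to the projection $P_{\rg(\Theta)\cap L}$, then builds an explicit projection
\[
P:\domain(\Theta)\to M:=\Theta^{-1}(L),\qquad Pw=\Theta^{-1}P_{\rg(\Theta)\cap L}\Theta w + P_{\ker(\Theta)}w,
\]
and derives $\|w-Pw\|_X\le B\tilde C\,F(\Theta w)+B\tilde D$ together with $\|\Theta(w-Pw)\|_W\le \tilde C\,F(\Theta w)+\tilde D$. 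For lower semicontinuity it then extracts weak* limits of $(w^k-Pw^k)$ and $(\Theta(w^k-Pw^k))$ and uses that the residual $Pw^k$ lives in the finite-dimensional space $M$ together with $L$-invariance of $F$. For the second assertion it chooses a complement $Z$ of $\ker(K)\cap M$ inside $M$, sets $\tilde w^k=w^k-(\Id-P_Z)Pw^k$, and bounds $\|P_ZPw^k\|$ via the injectivity of $K$ on the finite-dimensional space $Z$.

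Your route is more direct and economical for this lemma in isolation: the sublemma gives lower semicontinuity in two lines, and your single coercivity estimate $\|w-P_Nw\|_X\le \hat C\big((F\circ\Theta)(w)+\|Kw\|_Y\big)+\hat D$ handles the second assertion uniformly. The price is that the contradiction argument for this estimate is a bit more delicate than anything in the paper's proof (you have to use~\eqref{eq:coerctheta} plus finite-dimensionality of $L$ to upgrade weak* to strong convergence of the normalized sequence, which you correctly flag). Conversely, the paper's explicit projection $P$ onto $M$ is not wasted effort: it is reused verbatim in the proof of Theorem~\ref{thm:graphessentiallycoercive} and again in Section~\ref{sec:regularizationinverse}, so constructing it here has structural payoff that your approach forgoes.
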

\begin{proof}
At first note that, by Lemma \ref{lem:projection_transfer} and using that $\rg(\Theta)$ is closed (Lemma \ref{lem:equivalence_coercivity_closed_range}) there exist $\tilde{C} >0$, $\tilde D \geq 0$ such that
\[ \|z - P_{\rg(\Theta) \cap L} z \|_W \leq \tilde{C} F(z)+\tilde{D} \quad \text{for all } z \in \rg(\Theta) ,\]
where $P_{\rg(\Theta) \cap L}$ is a bounded linear projection onto the finite dimensional space $\rg(\Theta) \cap L$.

Defining $\ker(\Theta)^\perp:= \rg (\Id - P_{\ker(\Theta)}) \cap \domain(\Theta)$, it is easy to see that $\ker(\Theta)^\perp$ is a complement of $\ker(\Theta)$ in $\domain (\Theta)$ and that $\Theta$ is injective on $\ker(\Theta)^\perp$. 
Hence, with $M:= \Theta^{-1}( L) $, we can define 
$P:\domain(\Theta) \rightarrow M$ as
\[ Pw := \Theta^{-1} P_{\rg(\Theta) \cap L} \Theta w + P_{\ker(\Theta)} w, \]
where  $\Theta^{-1}$ is the inverse of $\Theta:\ker(\Theta)^\perp \rightarrow \rg(\Theta)$. Then, we observe that $F\circ \Theta$ is invariant on $M$ and that $M$ is a finite dimensional vector space. Further, for any $w \in \domain(\Theta)$, 
\begin{equation} \label{eq:coercivity_composition}
\begin{aligned}
\| w - Pw \| _X 
&= \|w - \Theta^{-1} P_{\rg(\Theta) \cap L} \Theta w - P_{\ker(\Theta)} ( w - \Theta^{-1} P_{\rg(\Theta) \cap L } \Theta w )\|_X \\
& \leq B \|\Theta( w - \Theta^{-1} P_{\rg(\Theta) \cap L} \Theta w)\|_W \\
& = B \|\Theta( w - Pw) \|_W \\
& =  B \|\Theta w - P_{\rg(\Theta) \cap L} \Theta w\|_W  \leq B\tilde{C} F(\Theta w ) + B\tilde{D}.
\end{aligned} 
\end{equation}

We now prove that $F\circ \Theta$ is weak* lower semi-continuous. Take $(w^k)_k$ weak* converging to some $w \in X$. Without loss of generality, we can assume that $\liminf_k (F\circ \Theta) (w^k) < \infty$ and, up to extracting a subsequence, we can choose $w^k$  such that it realizes the $\liminf$. Then, by the estimate \eqref{eq:coercivity_composition} above, both $(\|w^k - Pw^k \|_X)_n$ and $ (\| \Theta (w^k - P w^k )\|_W)_k $ are bounded such that, by taking a non-relabeled subsequence, we can assume that $\Theta (w^k - Pw^k) $ weak* converges to some $z \in W$ and $w^k - Pw^k$ weak* converges to some $v \in X$. Weak* closedness of $\Theta$ then implies that $v \in \domain (\Theta)$ and $\Theta v =z$. Also, thanks to the finite dimensionality of $M$ we have that $w-v = \text{\textsf{w*}-}\lim_k w^k - (w^k - Pw^k) \in M$ such that, by weak* lower semi-continuity of $F$, we conclude
\[ F(\Theta w) = F(\Theta v) \leq \liminf_k F(\Theta (w^k - Pw^k) ) = \liminf_k F(\Theta w^k  ) \]
as claimed.

Now take $(w^k)_k$ in $X$ such that both $(Kw^k)_k$ and $(F\circ \Theta (w^k))_k$ are bounded. Let $Z$ be a complement of $\ker(K) \cap M$ in $M$ and $P_Z: M \rightarrow Z$ a continuous linear projection onto $Z$, i.e., $\Id - P_Z : M \rightarrow M$ is a projection onto $\ker(K) \cap M$.
Then, define
 $\tilde{w}^k = w^k - ( Pw^k - P_Z Pw^k)$. It follows that $w^k - \tilde{w}^k = (\Id - P_Z)Pw^k  \in \ker(K) \cap M$ and that $(w^k - Pw^k)_k$ is bounded in $X$ by the estimate \eqref{eq:coercivity_composition} and the boundedness of $(F\circ \Theta (w^k))_k$. Now since $K$ is injective on the finite dimensional space $Z$ and $KP_Z = K$, we further get for $A>0$ a generic constant that
\[
 \| P_Z Pw^k \|_X \leq A \|K P_Z Pw^k \|_Y \leq A( \|K w^k \|_Y + \|K (w^k - Pw^k) \| _Y) < A < \infty
\]
such that $(\tilde{w}^k)_k$ is bounded. Boundedness of $\Theta$ on the finite dimensional space $Z \subset \domain(\Theta)$ together with the estimate \eqref{eq:coercivity_composition} further implies that $(\Theta \tilde{w}^k)_k$  is bounded. 
\end{proof}

Using the previous lemma, we now deal with the kernel and coercivity of regularization graph functionals.

\begin{teo} \label{thm:graphessentiallycoercive}
Let $\Gc _\alpha$ be a regularization graph with weights $(\alpha_e)_e$, underlying graph structure $G = (V,E)$ and root node $\hat{n} \in V$,  and let $R_\alpha = R (\Gc_\alpha) : {X_{\hat{n}}} \rightarrow [0,+\infty]$ be the associated regularization graph functional. Then:
\begin{enumerate}
\item[i)] The infimum in the recursive representation of $R_\alpha$ \eqref{eq:recursiveformula}  provided  in Lemma \ref{lem:primal_graph_recursion} is attained for any $u \in X_{\hat{n}}$.
\item[ii)]  $R_\alpha$ is weak* lower-semicontinous.
\item[iii)] There exists a finite dimensional subspace $L \subset X_{\hat{n}}$ such that $R_\alpha$ is invariant on $L$ and for $P_{L}:X \rightarrow L$ a bounded, linear projection there exist $C >0$, $D\geq 0$ such that, for $u \in X_{\hat n}$,
\begin{equation} \label{eq:graph_coercive}
 \|u- P_{L} u\|_{X_{\hat{n}}} \leq C  R_\alpha(u) + D .
 \end{equation}
\end{enumerate}

\end{teo}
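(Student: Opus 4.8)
The plan is to argue by induction on the height $h = \height(\Gc_\alpha)$, using the recursive representation of Lemma \ref{lem:primal_graph_recursion} together with Lemma \ref{lem:general_existence} as the workhorse for the inductive step. In the base case $h = 0$ the graph is trivial and $R_\alpha = \Psi_{\hat n}$, so i) is immediate, ii) is Assumption \ref{ass:lsc_functional}, and iii) follows from Remark \ref{rem:equivcoercivity} with $L = \{0\}$. For $h \geq 1$ I would assume i)--iii) hold for every regularization graph of height at most $h-1$; by Lemma \ref{lem:primal_graph_recursion} the subtree functionals $R^{\hat e}_{\alpha^{\hat e}}$ appearing in \eqref{eq:recursiveformula} are of this type, so each is weak* lower semicontinuous (ii) and admits a finite dimensional invariance space $L_{\hat e} \subset X_{\hat n^{\hat e}}$ with a coercivity estimate (iii). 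Feeding (iii) for $R^{\hat e}_{\alpha^{\hat e}}$ and Assumptions \ref{ass:ker_fwd_operator}, \ref{ass:coerc_fwd_operator} for $\Theta_{\hat e}$ into Lemma \ref{lem:general_existence} yields that each composition $R^{\hat e}_{\alpha^{\hat e}} \circ \Theta_{\hat e}$ is weak* lower semicontinuous and the decomposition estimate \eqref{eq:coercivity_composition} with respect to the finite dimensional space $M_{\hat e} := \Theta_{\hat e}^{-1}(L_{\hat e})$.

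The central mechanism for i) and ii) is the following. Given a minimizing (resp., for fixed $k$, a $u^k$-minimizing) sequence $(w_{\hat e}^k)_{\hat e}$ with bounded objective, coercivity of $\Psi_{\hat n}$ (Remark \ref{rem:equivcoercivity}) bounds $\|u - \sum_{\hat e} \alpha_{\hat e}\Phi_{\hat e} w_{\hat e}^k\|$, while boundedness of the $R^{\hat e}$-terms lets me apply the second conclusion of Lemma \ref{lem:general_existence} (with the zero operator as $K$) to split $w_{\hat e}^k = \tilde w_{\hat e}^k + m_{\hat e}^k$, where $(\tilde w_{\hat e}^k)$ and $(\Theta_{\hat e}\tilde w_{\hat e}^k)$ are bounded and $m_{\hat e}^k \in M_{\hat e}$. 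Since $\Theta_{\hat e} m_{\hat e}^k \in L_{\hat e}$, invariance keeps $R^{\hat e}(\Theta_{\hat e} w_{\hat e}^k) = R^{\hat e}(\Theta_{\hat e}\tilde w_{\hat e}^k)$; crucially, the correction $n^k := \sum_{\hat e}\alpha_{\hat e}\Phi_{\hat e} m_{\hat e}^k$ then lives in the fixed finite dimensional space $N := \sum_{\hat e}\alpha_{\hat e}\Phi_{\hat e}(M_{\hat e}) \subset X_{\hat n}$ and is bounded there. Passing to subsequences via Assumption \ref{ass:weak_star_compactness} (weak* limits of $\tilde w_{\hat e}^k$, closedness \ref{ass:closed_fwd_operator} of $\Theta_{\hat e}$, strong convergence $n^k \to n$ in $N$, and weak*-continuity \ref{ass:cont_bkw_operator} of $\Phi_{\hat e}$), I would construct a candidate $w_{\hat e}^\ast = \tilde w_{\hat e} + m_{\hat e}$ (writing $n = \sum_{\hat e} \alpha_{\hat e}\Phi_{\hat e} m_{\hat e}$ with $m_{\hat e}\in M_{\hat e}$) whose $\Psi_{\hat n}$-argument is exactly the weak* limit of $u - \sum_{\hat e} \alpha_{\hat e}\Phi_{\hat e} w_{\hat e}^k$ and whose $R^{\hat e}$-values equal $R^{\hat e}(\Theta_{\hat e}\tilde w_{\hat e})$. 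Weak* lower semicontinuity of $\Psi_{\hat n}$ and of $R^{\hat e}\circ\Theta_{\hat e}$ then gives attainment (i) for fixed $u$, and the identical computation with $u^k \weakstar u$ gives lower semicontinuity (ii).

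For iii) I would take $L := N = \sum_{\hat e}\alpha_{\hat e}\Phi_{\hat e}(M_{\hat e})$. Invariance of $R_\alpha$ on $L$ follows from a substitution argument: replacing $w_{\hat e}$ by $w_{\hat e} + m_{\hat e}$ with $m_{\hat e}\in M_{\hat e}$ leaves each $R^{\hat e}(\Theta_{\hat e} w_{\hat e})$ unchanged (invariance on $L_{\hat e}$) and shifts the $\Psi_{\hat n}$-argument by exactly the corresponding element of $L$, so it is a bijection of the feasible set in \eqref{eq:recursiveformula} for $u$ onto the one for $u+\ell$. For the estimate, I would evaluate at a minimizer $(w_{\hat e})$ (which exists by i)): bounding $\|u - \sum_{\hat e} \alpha_{\hat e}\Phi_{\hat e} w_{\hat e}\|$ by $R_\alpha(u)$ via $\Psi_{\hat n}$-coercivity, and each $\|w_{\hat e} - P_{M_{\hat e}} w_{\hat e}\|$ by $R^{\hat e}(\Theta_{\hat e} w_{\hat e}) \leq R_\alpha(u)$ via \eqref{eq:coercivity_composition}, shows that $u - \sum_{\hat e}\alpha_{\hat e}\Phi_{\hat e} P_{M_{\hat e}} w_{\hat e}$ has norm controlled by $C R_\alpha(u) + D$, while $\sum_{\hat e}\alpha_{\hat e}\Phi_{\hat e} P_{M_{\hat e}} w_{\hat e} \in N$. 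This exhibits a (possibly nonlinear) map $G: X_{\hat n} \to L$ with $\|u - G(u)\| \leq C R_\alpha(u) + D$, and Lemma \ref{lem:projection_transfer} (with $K = X_{\hat n}$) upgrades this to the stated estimate for any bounded linear projection $P_L$.

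I expect the main obstacle to be the coupling between the edges through the shared node space $X_{\hat n}$: the objective only controls the sum $\sum_{\hat e}\alpha_{\hat e}\Phi_{\hat e} w_{\hat e}$, not the individual terms, so individual minimizing components need not be bounded. The insight that makes the argument go through is that Lemma \ref{lem:general_existence} confines all unbounded directions of each $w_{\hat e}^k$ to the finite dimensional space $M_{\hat e}$, hence the uncontrolled part of the sum to the single finite dimensional space $N$, where weak* compactness and strong convergence can be invoked; the same space $N$ then serves simultaneously as the invariance subspace $L$ in iii).
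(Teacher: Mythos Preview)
Your proposal is correct and follows essentially the same inductive strategy as the paper: induction on the height, recursive representation via Lemma \ref{lem:primal_graph_recursion}, Lemma \ref{lem:general_existence} as the compactness/lsc workhorse for the inductive step, the invariance space $L = \sum_{\hat e}\alpha_{\hat e}\Phi_{\hat e}(M_{\hat e})$, and Lemma \ref{lem:projection_transfer} to upgrade the nonlinear map $G(u)$ to a linear projection. The only tactical difference is in how you invoke Lemma \ref{lem:general_existence}: the paper vectorizes over $\hat E$ and applies it with $K w = \sum_{\hat e}\alpha_{\hat e}\Phi_{\hat e} w_{\hat e}$, so the modified sequence automatically satisfies $K\tilde w^k = K w^k$ and no correction needs to be tracked; you instead apply it edge-by-edge with $K=0$ and separately bound the residual $n^k = \sum_{\hat e}\alpha_{\hat e}\Phi_{\hat e} m_{\hat e}^k$ using that $\sum_{\hat e}\alpha_{\hat e}\Phi_{\hat e} w_{\hat e}^k$ is bounded (via $\Psi_{\hat n}$-coercivity) and $\sum_{\hat e}\alpha_{\hat e}\Phi_{\hat e}\tilde w_{\hat e}^k$ is bounded (via boundedness of $\tilde w_{\hat e}^k$), then pass to a strongly convergent subsequence in the finite dimensional space $N$. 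Both routes are valid; the paper's is slightly more streamlined, yours makes the role of the finite dimensional ``defect'' space $N$ more explicit.
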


\begin{proof}
We prove the result via induction over the height of the graph. Assume that the claimed assertions hold true for any regularization graph of height less than $h$ and let $\Gc_\alpha $ be a regularization graph of height $h$ with associated functional $R_\alpha = R(\Gc_\alpha)$. If $h=0$, the results hold trivially with $L = \{0\}$ thanks to Assumptions \ref{ass:lsc_functional}, \ref{ass:coerc_functional} and the definition of trivial regularization graphs. Otherwise, using Lemma \ref{lem:primal_graph_recursion} we write $R_\alpha$ as 
  \begin{equation} \label{eq:recursion_tmp}
R_\alpha(u) = \inf\Big
  \{ \Psi_{\hat{n}} \big(u-  \sum_{ \hat{e} \in \hat{E}} \alpha_{\hat{e}} \Phi_{\hat{e}} w_{\hat{e}} \big) + \sum_{\hat{e} \in \hat{E}}  R_{\alpha^{\hat{e}}}^{\hat{e}}(\Theta_{\hat{e}} w_{\hat{e}}) \Bigst w_{\hat{e}} \in \domain(\Theta_{\hat{e}})   \text{ for all } \hat{e} \in \hat{E}
 \Big\},
 \end{equation} 
 where $\hat E$ are the edges connected to $\hat n$ and for each $\hat e \in \hat E$, $R_{\alpha^{\hat{e}}}^{\hat{e}} : X_{n^{\hat{e}}} \rightarrow [0,\infty]$ is a functional associated to a regularization graph $\Gc_{\alpha^{\hat{e}}}^{\hat{e}}$ with root node $\hat{n}^{\hat{e}}$. %
Also, remember that by \ref{ass:ker_fwd_operator} and \ref{ass:coerc_fwd_operator} (see also Remark \ref{rem:initialremark}) each $\Theta_{\hat{e}}:X_{\hat{e}} \rightarrow X_{\hat{n}^{\hat{e}}}$ has closed range, finite dimensional kernel and satisfies 
\begin{equation} \label{eq:thmeq1}
  \| w - P_{\ker(\Theta_{\hat{e}})}w \|_{X_{\hat{e}}} \leq B^{\hat{e}} \|\Theta_{\hat{e}} w\|_{X_{\hat{n}^{\hat{e}}}} 
\end{equation}
 for $B^{\hat{e}} >0$ and all $w \in \domain(\Theta_{\hat{e}})$.

 Applying the induction hypothesis, each $R_{\alpha^{\hat{e}}}^{\hat{e}}$ is weak* lower-semicontinuous and there exists a finite dimensional subspace $L^{\hat{e}}$ where $R_{\alpha^{\hat{e}}}^{\hat{e}}$ is invariant such that for 
 $P_{L^{\hat{e}}}$ a bounded linear projection there exist constants $C^{\hat{e}} > 0$ and $D^{\hat{e}}\geq 0$ with
 \begin{equation}\label{eq:inductiveessential}
\|v - P_{L^{\hat{e}}}v\|_{X_{\hat{n}^{\hat{e}}}} \leq C^{\hat{e}} R_{\alpha^{\hat{e}}}^{\hat{e}}(v) + D^{\hat{e}} \qquad  \forall v\in X_{\hat{n}^{\hat{e}}}.
\end{equation}
 Moreover, applying Lemma \ref{lem:projection_transfer} with $L = L^{\hat{e}}$, $G =P_{L^{\hat{e}}}$, $R = R_{\alpha^{\hat{e}}}^{\hat{e}}$ and $K = \rg(\Theta_{\hat{e}})$ (that is closed thanks to \ref{ass:ker_fwd_operator}, \ref{ass:coerc_fwd_operator} and Remark \ref{rem:initialremark})  yields that for $P_{\rg(\Theta_{\hat{e}}) \cap L^{\hat{e}}}$ a linear, continuous projection there exists $\tilde C^{\hat{e}} > 0$ and $\tilde D^{\hat{e}} \geq 0$ with
\begin{equation}\label{eq:formulaessential}
\|v - P_{\rg(\Theta_{\hat{e}}) \cap L^{\hat{e}}} v\|_{X_{\hat{n}^{\hat{e}}}} \leq \tilde C^{\hat{e}} R_{\alpha^{\hat{e}}}^{\hat{e}}(v) + \tilde D^{\hat{e}} \qquad  \forall v\in X_{\hat{n}^{\hat{e}}}.
\end{equation}

Now proceeding as in the proof of Lemma \ref{lem:general_existence}, we define $\ker(\Theta_{\hat{e}})^\perp:= \rg (\Id - P_{\ker(\Theta_{\hat{e}})}) \cap \domain(\Theta_{\hat{e}})$. It is easy to see that $\ker(\Theta_{\hat{e}})^\perp$ is a complement of $\ker(\Theta_{\hat{e}})$ in $\domain (\Theta_{\hat{e}})$ and that $\Theta_{\hat{e}}$ is injective on $\ker(\Theta_{\hat{e}})^\perp$. 
Hence, with 
\begin{equation} \label{eq:kernel_Rl_definition} 
M^{\hat{e}}:= \Theta_{\hat{e}}^{-1}( L^{\hat{e}}) ,
\end{equation} we can define 
$P^{\hat{e}}:\domain(\Theta_{\hat{e}}) \rightarrow M^{\hat{e}}$ as
\begin{equation}\label{eq:projection_kernel_Rl_definition}
 P^{\hat{e}}w := \Theta_{\hat{e}}^{-1} P_{\rg(\Theta_{\hat{e}}) \cap L^{\hat{e}}} \Theta_{\hat{e}} w + P^{\hat{e}}_{\ker(\Theta_{\hat{e}})} w, 
 \end{equation}
where  $\Theta_{\hat{e}}^{-1}$ is the inverse of $\Theta_{\hat{e}}:\ker(\Theta_{\hat{e}})^\perp \rightarrow \rg(\Theta_{\hat{e}})$. Then, we observe that $R_{\alpha^{\hat{e}}}^{\hat{e}} \circ \Theta_{\hat{e}}$ is invariant on $M^{\hat{e}}$ and that $M^{\hat{e}}$ is a finite dimensional vector space. 
Then, estimating as in \eqref{eq:coercivity_composition}, we obtain that for each $w \in X_{\hat{e}}$ that
\begin{equation} \label{eq:coercivitychosen}
 \| w - P^{\hat{e}}w\| _{X_{\hat{e}}}  \leq B^{\hat{e}} \tilde{C}^{\hat{e}} R_{\alpha^{\hat{e}}}^{\hat{e}} (\Theta_{\hat{e}} w) + B^{\hat{e}}\tilde{D}^{\hat{e}}.
\end{equation}

Now we write the recursive representation \eqref{eq:recursion_tmp} in a vectorized form as follows: With $w = (w_{\hat{e}})_{\hat{e} \in \hat E} \in X:= \bigtimes_{\hat{e} \in \hat{E}} X_{\hat{e}}$, define $K :X\rightarrow X_{\hat{n}}$ and $\Theta : \bigtimes_{\hat{e} \in \hat{E}} \domain(\Theta_{\hat{e}}) \subset X  \rightarrow \bigtimes _{\hat{e}\in \hat{E}}  X_{{\hat n}^{\hat{e}}}$ as  
\begin{equation}
Kw := \sum_{\hat{e} \in \hat{E}} \alpha_{\hat{e}} \Phi_{\hat{e}} w_{\hat{e}} \quad \text{and} \quad \Theta w := (\Theta_{\hat{e}}w_{\hat e})_{\hat e \in \hat E}.
\end{equation}
Further, with $v = (v_{{\hat n}^{\hat{e}}})_{\hat e \in \hat E} \in \bigtimes _{\hat{e}\in \hat{E}} X_{{\hat n}^{\hat{e}}}$ define $F : \bigtimes _{\hat{e}\in \hat{E}} X_{{\hat n}^{\hat{e}}} \rightarrow [0,+\infty]$ as 
\begin{equation}
\quad F(v) = \sum_{\hat e \in \hat E} R_{\alpha^{\hat{e}}}^{\hat{e}} (v_{{\hat n}^{\hat{e}}}).
\end{equation}
Finally, define $F\circ \Theta:X \rightarrow [0,+\infty]$ and $P_M :  X \rightarrow M$, where  $M :=  \bigtimes _{\hat{e}\in \hat{E}} M^{\hat{e}} $ is a finite dimensional subspace of $X$, as
\begin{equation}
(F\circ \Theta)(w) = \begin{cases}
\sum_{\hat{e} \in \hat{E}} R_{\alpha^{\hat{e}}}^{\hat{e}} (\Theta_{\hat{e}} w_{\hat{e}}) & \text{if } w \in \bigtimes_{\hat{e} \in \hat{E}} \domain(\Theta_{\hat{e}}),  \\ \infty & \text{else,}
\end{cases}
\quad  \mbox{and }\quad P_M w = (P^{\hat{e}} w_{\hat{e}})_{\hat{e} \in \hat{E}}.
\end{equation}
We observe that $F\circ \Theta$ is invariant on $M = \Theta^{-1}(L)$ where $L = \bigtimes_{\hat e \in \hat E} L^{\hat e}$. With these definitions we can write the recursive representation \eqref{eq:recursion_tmp} in the vectorized form
\begin{align*}
R_\alpha(u) =  \inf\Big
  \{ \Psi_{\hat{n}} \big(u-  Kw \big) +  (F\circ \Theta)(w) \Bigst w \in \bigtimes_{\hat{e} \in \hat{E}} \domain(\Theta_{\hat{e}})
 \Big\}.
\end{align*}

Now we first show weak* lower semi-continuity of $R_\alpha$ on $X_{\hat{n}}$. To this aim, take $(u^k)_k$ to be a sequence in $X_{\hat{n}}$ converging weakly* to some $u \in X_{\hat{n}}$. Without loss of generality, we can assume  that $\liminf_k R_\alpha (u^k) < \infty$ and, up to extracting a subsequence, we can assume that $(u^k)_k$  realizes the $\liminf$.
Next, take $(w^k)_k$ to be a sequence in $X$ such that
\[  \Psi_{\hat{n}} (u^k - Kw^k) + (F\circ \Theta)(w^k) \leq R_\alpha(u^k) + 1/k .\]
Together with assumption \ref{ass:coerc_functional}, this implies boundedness of $(Kw^k)_k$ and $(F\circ \Theta)(w^k)$.  We now want to apply Lemma \ref{lem:general_existence} choosing $\bigtimes_{\hat e \in \hat E} L^{\hat e}$ for $L$ and $v\mapsto (P_{L^{\hat e}}v_{\hat{n}^{\hat{e}}})_{\hat e \in \hat E}$ with $v = (v_{{\hat n}^{\hat{e}}})_{\hat e \in \hat E} \in \bigtimes _{\hat{e}\in \hat{E}} X_{{\hat n}^{\hat{e}}}$ for $P_L$. Note that $F$ is weak* lower semicontinuous,  it is invariant on $\bigtimes_{\hat e \in \hat E} L^{\hat e}$ and the estimate \eqref{eq:poincforlemma} holds thanks to the inductive assumption \eqref{eq:inductiveessential} applied to each $R_{\alpha^{\hat{e}}}^{\hat{e}}$. Moreover, it can be readily verified that the operator $\Theta$ is weak* to weak* closed, has finite dimensional kernel and the estimate \eqref{eq:coerctheta} holds as a direct consequence of \eqref{eq:thmeq1} applied to each $\Theta_{\hat e}$. So, applying  Lemma \ref{lem:general_existence} and using the weak* to weak* closedness of $\Theta$ we can select, up to a non-relabeled subsequence, $(\tilde{w}^k)_k$ such that $\tilde{w}^k - w^k \in \ker(K) \cap M$, $\tilde{w}^k $ converges weak* to some $w \in X_{\hat{n}}$ and $\Theta \tilde{w}^k$ converges weak* to $\Theta w$. Weak* lower semi-continuity of $\Psi_{\hat{n}}$ and $F\circ \Theta$ together with weak* continuity of $K$ and the invariance of $F \circ \Theta$ on $M$ then imply
\[R_{\alpha}(u) \leq  \Psi_{\hat{n}} (u - Kw) + F(\Theta w)\leq  \liminf_k  \Psi_{ \hat{n}} (u^k - Kw^k) + F(\Theta  w^k) \leq  \liminf_k R_\alpha(u^k),
\]
thus weak* lower semi-continuity of $R_\alpha$ on $X_{\hat{n}}$ follows. In addition, given any $u \in X_{\hat{n}}$, choosing $u^k = u$ for every $k$ implies existence of minimizers in \eqref{eq:recursion_tmp} as claimed.

Now we note that $R_\alpha$ is invariant on the finite dimensional space
 \[ L:= \bigplus_{\hat{e} \in \hat{E}} \alpha_{\hat{e}}\Phi_{\hat{e}}(M^{\hat{e}}).\]
Indeed, if $u= \sum_{\hat{e} \in \hat{E}} \alpha_{\hat{e}}\Phi_{\hat{e}} u_{\hat{e}}$ with $u_{\hat{e}} \in M^{\hat{e}}$ it follows from $(F\circ \Theta)((\alpha_{\hat e}u_{\hat{e}})_{\hat{e} \in \hat{E}})=0$ that $R_\alpha(\tilde{u} + u) = R_\alpha(\tilde{u} )$ for all $\tilde{u} \in X_{\hat{n}}$.
 
Next we show the coercivity estimate. To this aim, for any given $u \in X_{\hat{n}}$, we select $(w_{\hat{e}})_{\hat{e}\in \hat{E}}$ to be minimizers in \eqref{eq:recursion_tmp} and define $v = v(u):= \sum_{\hat{e} \in \hat{E}}\alpha_{\hat{e}} \Phi_{\hat{e}} P^{\hat{e}} w_{\hat{e}}  $. Using \eqref{eq:coercivitychosen}, the coercivity of $\Psi_{\hat n}$ (see Remark \ref{rem:equivcoercivity}) and the continuity of $\Phi_{\hat{e}}$ we  estimate
\begin{align*}
\|u - v(u)\|_{X_{\hat{n}}} & = \left\|u - \sum_{\hat{e} \in \hat{E}}  \alpha_{\hat{e}}\Phi_{\hat{e}} P^{\hat{e}} w_{\hat{e}}\right\|_{X_{\hat{n}}} \\ 
&= \left\|u -\sum_{\hat{e} \in \hat{E}}\alpha_{\hat{e}}  \Phi_{\hat{e}} w_{\hat{e}} + \sum_{\hat{e} \in \hat{E}} \alpha_{\hat{e}}  \Phi_{\hat{e}} w_{\hat{e}} - \sum_{\hat{e} \in \hat{E}}  \alpha_{\hat{e}}\Phi_{\hat{e}} P^{\hat{e}} w_{\hat{e}} \right\|_{X_{\hat{n}}} \\
&  \leq C_1 \Psi_{\hat{n}} \left(u - \sum_{\hat{e}\in \hat{E}} \alpha_{\hat{e}}  \Phi_{\hat{e}} w_{\hat{e}}\right)  + D_1 + C_2\sum_{\hat{e} \in \hat{E}} \alpha_{\hat{e}}\| w_{\hat{e}} - P^{\hat e} w_{\hat{e}}\|_{X_{\hat{e}}} \\ 
&\leq C_1 \Psi_{\hat{n}} \left(u - \sum_{\hat{e} \in \hat{E}} \alpha_{\hat{e}}  \Phi_{\hat{e}} w_{\hat{e}} \right)  + D_1  +C_2 \sum_{\hat{e} \in \hat{E}} ( \alpha_{\hat{e}} B^{\hat e}  \tilde C^{\hat e} R_{\alpha^{\hat{e}}}^{\hat{e}} ( \Theta_{\hat{e}} w_{\hat{e}}) +  \alpha_{\hat{e}} B^{\hat e} \tilde D^{\hat{e}})\\
& \leq C   R_\alpha(u) + D,
\end{align*}
with suitable $C >0$ and $D \geq 0$.
Using Lemma \ref{lem:projection_transfer} with $G(u) = v(u)$ and $K = X_{\hat{n}}$, this implies the claimed coercivity.
\end{proof}

\begin{rem}[Domain and invariant subspace]\label{rem:domain_invariant_subspace} With $R_\alpha :X_{\hat n} \rightarrow [0,\infty]$ a regularization graph functional with recursive representation as in Lemma \ref{lem:primal_graph_recursion}, we have that
\[ \domain(R_\alpha) =  \domain(\Psi_{\hat{n}}) + \bigplus_{\hat{e} \in \hat{E}} \alpha_{\hat{e}}\Phi_{\hat{e}}\Big(\Theta_{\hat{e}}^{-1} (\domain(R_{\alpha^{\hat{e}}}^{\hat{e}}) ) \Big),\]
where equality follows since the infimum in the recursive represenation of Lemma \ref{lem:primal_graph_recursion} is attained.
Further, a simple contradiction argument shows that the finite dimensional subspace $L$ where $R_\alpha$ is invariant and coercive in the sense of \eqref{eq:graph_coercive} is unique (and will henceforth be called \emph{the invariant subspace of $R_\alpha$}). We also have the recursive representation
\[ L =   \bigplus_{\hat{e}\in \hat{E}} \alpha_{\hat{e}}\Phi_{\hat{e}}\Big(\Theta_{\hat{e}}^{-1} (L^{\hat{e}}) )  \Big),\]
with $L_{\hat{e}}$ the invariant subspace of $R_{\alpha_{\hat{e}}}^{\hat{e}}$. Via induction, this implies in particular that $L$ only depends on the support of $(\alpha_e)_e$, i.e., the set of edges where $\alpha_e \neq 0$, but not on their values.
\end{rem}
Existence for the infimum over edge variables associated with edges connected to the root node in the recursive representation of $R_\alpha$, as stated in Theorem \ref{thm:graphessentiallycoercive}, immediately implies, via induction, existence of infimizing edge variables in the definition of the regularization  graph for all edges. This is stated in the following corollary.
\begin{cor}[Existence of infimizing edge variables]\label{cor:existenceedges}
Let $\Gc_\alpha$ be a regularization graph with root node $\hat{n}$ and $R_\alpha = R(\Gc_\alpha)$ be the associated functional. Then, for each $u \in X_{\hat{n}}$, there exists $(w_e)_{e \in E}$ such that
\[ R_{\alpha}(u) =  \sum_{n \in V} \Psi_{n} 
\Big( \Theta_{(n^-,n)} w_{(n^-,n)} - \sum_{(n,m) \in E} \alpha_{(n,m)} \Phi_{(n,m)} w_{(n,m)}
\Big) ,
\] i.e., the infimum in the definition of the regularization graph functional is attained.
\end{cor}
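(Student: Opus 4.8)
The plan is to prove the statement by induction on the height $h$ of the regularization graph $\Gc_\alpha$, using part i) of Theorem \ref{thm:graphessentiallycoercive} together with the recursive representation of Lemma \ref{lem:primal_graph_recursion}. For the base case $h=0$ the graph is trivial, $E = \emptyset$ and $R_\alpha(u) = \Psi_{\hat{n}}(u)$, so there is nothing to optimize and the assertion holds vacuously. For the inductive step, I would assume that infimizing edge variables exist for every regularization graph of height less than $h$ and let $\Gc_\alpha$ have height $h \geq 1$.

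First I would invoke Theorem \ref{thm:graphessentiallycoercive} i) to obtain edge variables $(w_{\hat{e}})_{\hat{e} \in \hat{E}}$, with $w_{\hat{e}} \in \domain(\Theta_{\hat{e}})$, attaining the infimum in the recursive representation \eqref{eq:recursiveformula}, i.e.,
\[ R_\alpha(u) = \Psi_{\hat{n}}\Big(u - \sum_{\hat{e} \in \hat{E}} \alpha_{\hat{e}} \Phi_{\hat{e}} w_{\hat{e}}\Big) + \sum_{\hat{e} \in \hat{E}} R_{\alpha^{\hat{e}}}^{\hat{e}}(\Theta_{\hat{e}} w_{\hat{e}}). \]
Since by Lemma \ref{lem:primal_graph_recursion} each subgraph $\Gc_{\alpha^{\hat{e}}}^{\hat{e}}$ has height at most $h-1$, the induction hypothesis applied to $\Gc_{\alpha^{\hat{e}}}^{\hat{e}}$ with input $\Theta_{\hat{e}} w_{\hat{e}}$ furnishes, for each $\hat{e} \in \hat{E}$, edge variables $(w_e)_{e \in E^{\hat{e}}}$ that attain the infimum defining $R_{\alpha^{\hat{e}}}^{\hat{e}}(\Theta_{\hat{e}} w_{\hat{e}})$. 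Assembling these with the $(w_{\hat{e}})_{\hat{e} \in \hat{E}}$ yields a full family $(w_e)_{e \in E}$, using that the tree structure makes $E$ the disjoint union of $\hat{E}$ and the subtree edge sets $E^{\hat{e}}$.

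It then remains to verify that this assembled family attains the infimum in the original definition \eqref{defregularizer}, which is a term-by-term comparison of the node sum in \eqref{defregularizer} with the attained recursive representation. The $\hat{n}$-term reproduces $\Psi_{\hat{n}}(u - \sum_{\hat{e}} \alpha_{\hat{e}} \Phi_{\hat{e}} w_{\hat{e}})$ exactly, since the edges leaving $\hat{n}$ are precisely $\hat{E}$; and for each $\hat{e}$, the terms over $n \in V^{\hat{e}}$ reproduce the sum defining $R_{\alpha^{\hat{e}}}^{\hat{e}}(\Theta_{\hat{e}} w_{\hat{e}})$. The one point that requires care is the subtree-root node $\hat{n}^{\hat{e}}$: in the full graph its incoming edge is $\hat{e}$, so its argument is $\Theta_{\hat{e}} w_{\hat{e}} - \sum_{(\hat{n}^{\hat{e}},m) \in E^{\hat{e}}} \alpha_{(\hat{n}^{\hat{e}},m)} \Phi_{(\hat{n}^{\hat{e}},m)} w_{(\hat{n}^{\hat{e}},m)}$, which coincides with the $\hat{n}^{\hat{e}}$-term in the subtree's defining sum under the conventions $\Theta_{((\hat{n}^{\hat{e}})^-, \hat{n}^{\hat{e}})} = \Id$ and $w_{((\hat{n}^{\hat{e}})^-, \hat{n}^{\hat{e}})} = \Theta_{\hat{e}} w_{\hat{e}}$ employed in Lemma \ref{lem:primal_graph_recursion}. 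Granting this identification, the total node sum evaluates exactly to the right-hand side of the attained recursive representation, hence to $R_\alpha(u)$; since \eqref{defregularizer} is an infimum over all admissible families, it is therefore attained. The only genuine obstacle is this index bookkeeping, ensuring that the subtree decomposition of $E$ and the node-argument conventions match up; everything else is a direct application of Theorem \ref{thm:graphessentiallycoercive} i) and the induction hypothesis.
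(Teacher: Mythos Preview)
Your proposal is correct and follows precisely the approach indicated in the paper: the paper states (immediately before the corollary) that existence of infimizing edge variables follows ``via induction'' from Theorem~\ref{thm:graphessentiallycoercive}~i), and your argument spells out exactly that induction, including the necessary bookkeeping at the subtree root nodes.
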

\begin{rem}[Regularity]
Let us observe how an infimal-based combination and an extension of regularization graphs affect the coercivity of regularization graph functionals as in Theorem \ref{thm:graphessentiallycoercive}.
\begin{itemize}
\item When combining two different regularization graph functionals defined on two different normed spaces via infimal convolution, the norm for underlying joint space and hence the norm for the coercivity estimate needs to be the weaker of the two norms. In the construction of Proposition \ref{prop:infcon_sum_reggraphs}, this is reflected in the assumption that the embeddings $\emb^1_{X} :X_{\hat{n}^1} \rightarrow X$ and $\emb^2_{X} :X_{\hat{n}^2} \rightarrow X$ need to be weak* continuous. 
An example here is the infimal convolution of $\TV$ and $\TV^2$, where $\TV$ and $\TV^2$ are coercive up to their kernels on $L^{d/(d-1)}$ and $L^{d/(d-2)}$, respectively (here the exponents are set to $\infty$ for $d=1$ and $d\leq 2$, respectively). The infimal-convolution-based combination of the regularization graphs corresponding to $\TV$ and $\TV^2$, according to Proposition \ref{prop:infcon_sum_reggraphs}, is then coercive on the weaker space $L^{d/(d-1)}$, see \cite[Section 4.2]{Holler19_ip_review_paper} for details.
\item When extending a given regularization graph with a further edge, stronger norms can be chosen. A particular example is the composition of two gradient operators $\nabla _1 ,\, \nabla _2$ to obtain $\TV^2 = \|(\nabla _2 \circ \nabla _1 )\cdot \|_M$. Given that $\nabla _2$ is coercive up to its kernel on $L^{d/(d-1)}$, we can define $\nabla _1 $ as operator from $L^{d/(d-2)}$ to $L^{d/(d-1)}$ and again obtain coercivity up to constant functions between those spaces by standard Sobolev embeddings. In this case, the overall regularization graph functional corresponding to $\TV^2$ is coercive up to affine functions with respect to the norm in $L^{d/(d-2)}$, which is the improved regularity of $\TV^2$, see \cite[Section 3]{Holler19_ip_review_paper}.
\end{itemize}
\end{rem}

The following proposition deals with the dependence of the coercivity estimate for a regularization graph functional $R_\alpha$ on the weights $\alpha$.
\begin{prop}[Dependence on the weights]\label{prop:coercivity_dependence_on_weights} Let $\Gc_\alpha$ be a regularization graph with weights $\alpha$ and root node $\hat{n}$, let $L$ be the invariant subspace of $R_\alpha = R(\Gc_\alpha)$ that only depends on the structure of the regularization graph $\Gc$ and the support of $(\alpha_e)_{e \in E}$, let $K \subset X_{\hat{n}} $ be a closed subspace and let $P_{K \cap L}:X_{\hat{n}} \rightarrow L$ be a bounded, linear projection.
Then, for any $A \geq \max\{ \alpha_e \st e \in E\}$ there exist $C>0$, $D\geq 0$ that only depend on $A$ such that for any $u \in X_{\hat{n}}$ we have
\[ \|u- P_{L} u\|_{X_{\hat{n}}} \leq C_{\alpha} C  R_\alpha(u) + D ,\]
where
\[ C_{\alpha}:= \max \big \{ \prod_{e \in F} \alpha_e \st F \subset E \text{ is either empty or a chain with $\hat{n} \in V$ as root } \big \},\]
using the same conventions as in Lemma \ref{lem:weight_equivalence}.
\begin{proof}
This follows from first applying Theorem \ref{thm:graphessentiallycoercive} and, subsequently, Lemma \ref{lem:projection_transfer} to $R_{\tilde{\alpha}} = R(\Gc_{\tilde{\alpha}})$, where $\tilde{\alpha}_e = A$ if $\alpha_e >0$ and $\tilde{\alpha}_e = 0$ else, and then using the estimate of Lemma \ref{lem:weight_equivalence}.
\end{proof}
\end{prop}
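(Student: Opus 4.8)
The plan is to dominate $R_\alpha$ from below by a rescaled copy of the functional $R_{\tilde\alpha}$ obtained from the same graph by setting every nonzero weight equal to the common bound $A$: the coercivity constants of $R_{\tilde\alpha}$ will then depend on $A$ only, while the passage from $R_{\tilde\alpha}$ back to $R_\alpha$ via Lemma~\ref{lem:weight_equivalence} produces precisely the factor $C_\alpha$. Concretely, I would first set $\tilde\alpha_e = A$ whenever $\alpha_e > 0$ and $\tilde\alpha_e = 0$ otherwise. Since $A \geq \max_e \alpha_e$ we have $\tilde\alpha_e \geq \alpha_e$ for every $e \in E$, and $\tilde\alpha$ has the same support as $\alpha$; hence, by the recursive description of the invariant subspace in Remark~\ref{rem:domain_invariant_subspace}, the functional $R_{\tilde\alpha} = R(\Gc_{\tilde\alpha})$ has the very same invariant subspace $L$ as $R_\alpha$.

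Next I would apply Theorem~\ref{thm:graphessentiallycoercive}(iii) to $R_{\tilde\alpha}$ and then Lemma~\ref{lem:projection_transfer} (with $R = R_{\tilde\alpha}$, $G$ the projection supplied by the theorem and the given closed subspace $K$) to produce constants $\bar C > 0$, $\bar D \geq 0$ with
\[
\|u - P_{K\cap L}u\|_{X_{\hat n}} \leq \bar C\, R_{\tilde\alpha}(u) + \bar D \qquad \text{for all } u \in K,
\]
which for $K = X_{\hat n}$ is the estimate with $P_L$ appearing in the statement. These constants are assembled from the fixed Poincaré constants of the $\Theta_e$, the operator norms of the $\Phi_e$ and the entries of $\tilde\alpha \in \{0,A\}^E$, so they depend only on $A$ and on the support of $\alpha$; taking the maximum over the finitely many subsets of $E$ that can occur as a support for the fixed structure $\Gc$, they may be chosen to depend on $A$ alone.

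Finally, since $\tilde\alpha_e \geq \alpha_e$, Lemma~\ref{lem:weight_equivalence} yields $R_{\tilde\alpha}(u) \leq C_{\tilde\alpha,\alpha}\, R_\alpha(u)$, and the main computation is to compare $C_{\tilde\alpha,\alpha}$ with $C_\alpha$. For a chain $F$ with root $\hat n$ along which all $\alpha_e > 0$ one has $\prod_{e\in F} \alpha_e/\tilde\alpha_e = A^{-|F|}\prod_{e\in F}\alpha_e$, whereas chains meeting a zero weight contribute $0$ (using the convention $0/0 = 0$) and the empty chain contributes $1$; since $|F| \leq \height(\Gc) =: N$ and $C_\alpha \geq 1$, this gives $C_{\tilde\alpha,\alpha} \leq \max(1, A^{-N})\, C_\alpha$. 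Chaining the three inequalities establishes the claim with $C := \bar C \max(1, A^{-N})$ and $D := \bar D$, both depending only on $A$.

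The step I expect to be the main obstacle is the final bookkeeping: reconciling the weight-comparison constant $C_{\tilde\alpha,\alpha}$ of Lemma~\ref{lem:weight_equivalence} with the target constant $C_\alpha$ --- in particular keeping the $A^{-N}$ factor that appears when $A < 1$ under control --- together with the argument that the coercivity constants of $R_{\tilde\alpha}$ can be made uniform over the finitely many admissible supports, so that they genuinely depend on $A$ only.
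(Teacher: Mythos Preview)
Your proposal is correct and follows exactly the same strategy as the paper's proof: define $\tilde\alpha_e = A$ on the support of $\alpha$ and $0$ elsewhere, apply Theorem~\ref{thm:graphessentiallycoercive} and Lemma~\ref{lem:projection_transfer} to $R_{\tilde\alpha}$ to obtain coercivity constants depending only on $A$, and then pass back to $R_\alpha$ via Lemma~\ref{lem:weight_equivalence}. Your additional bookkeeping (the bound $C_{\tilde\alpha,\alpha}\le\max(1,A^{-N})\,C_\alpha$ and the uniformity over the finitely many possible supports) is correct and simply makes explicit what the paper's one-line proof leaves implicit.
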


The next proposition deals with extending a regularization graph functional $R_\alpha:X_{\hat{n}} \rightarrow [0,\infty]$ by infinity to a Banach space $X$ not satisfying hypothesis \ref{ass:weak_star_compactness}, but with $X_{\hat{n}} \hookrightarrow X$. The prototypical application of this result would be, e.g., to extend $R_\alpha$ from $L^p(\Omega)$ with $p>1$ to $L^1(\Omega)$, where $\Omega \subset \R^d$ is a bounded domain. Note that directly choosing $X_{\hat{n}} = L^1(\Omega)$ is not feasible since, in general, bounded sequences in $L^1(\Omega)$ do not admit weak* convergent subsequences (or weakly convergent subsequences since $L^1(\Omega)$ is generally not a dual space).
\begin{prop}[Extended domain] \label{prop:reg_graph_extension_to_X}
Let $\Gc_\alpha$ be a regularization graph with weights $\alpha$ and root node $\hat{n}$. Let $L$  be its invariant space. Suppose that $X$ is a Banach space such that $X_{\hat{n}} \hookrightarrow X$ and $X_{\hat{n}}$ is reflexive. Then, with $R_\alpha= R(\Gc_\alpha)$ extended to $X$ via $R_\alpha(x) = \infty$ for $x \in X\setminus X_{\hat{n}}$, $R_\alpha$ is convex and lower semi-continuous w.r.t. weak convergence in $X$, and for any continuous, linear projection $\tilde P_L:X \rightarrow L$, there exists $C >0$, $D \geq 0$ such that
\begin{equation}\label{eq:coercextension}
\|u-\tilde  P_Lu\|_X \leq C R_\alpha(u) + D \quad \text{for all } u \in X.
\end{equation}
\begin{proof}
Convexity is immediate and the coercivity estimate follows directly from the continuous embedding $X_{\hat{n}} \hookrightarrow X$ and Theorem \ref{thm:graphessentiallycoercive} by defining $P_L$ as the restriction of $\tilde P_L$ to $X_{\hat{n}}$. Regarding weak lower semi-continuity, take $(u^k)_k$ to be a sequence in $X$ converging weakly to some $u \in X$. Without loss of generality, we can assume  that $\liminf_k R_\alpha (u^k) < \infty$ and, up to extracting a subsequence, we can choose $u^k$  such that it realizes the $\liminf$ and $u^k \in X_{\hat{n}}$ for every $k$.
With $P_L:X_{\hat{n}} \rightarrow L$ a continuous, linear projection, from the coercivity estimate of Theorem \ref{thm:graphessentiallycoercive} applied to  $R_\alpha : X_{\hat{n}} \rightarrow [0,\infty]$ we obtain that $v^k := u^k - P_L u^k $ is 
bounded in $X_{\hat{n}}$ such that we may assume weak convergence of the latter to $v \in X_{\hat{n}}$. 
Also, by the embedding $X_{\hat{n}} \hookrightarrow X $, $P_L u^k = u^k - v^k$ is bounded in $X$ and hence, by finite dimensionality of $L$, admits a subsequence converging to some $z \in X_{\hat{n}}\cap L$ with respect to $\|\cdot \|_{X_{\hat{n}}}$. Again by the embedding $X_{\hat{n}} \hookrightarrow X $, weak convergence in $X_{\hat{n}}$ implies weak convergence in $X$ such that, by uniqueness of the weak limit, $u=v+z\in X_{\hat{n}}$. Lower semi-continuity of $R_\alpha$ with respect to weak convergence in $X_{\hat{n}}$ finally implies
\[ R_\alpha(u) = R_\alpha(v+z) \leq \liminf_k R_\alpha(v^k + P_L u^k) = \liminf_k R_\alpha (u^k)
\]
implying the lower semi-continuity of $R_\alpha$ with respect to weak convergence in $X$. 
\end{proof}
\end{prop}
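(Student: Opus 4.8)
The plan is to establish the three claims — convexity, the coercivity estimate \eqref{eq:coercextension}, and weak lower semi-continuity on $X$ — in increasing order of difficulty. Convexity is immediate: since $X_{\hat n}$ is a subspace of $X$ and $R_\alpha$ is convex on $X_{\hat n}$ by Proposition~\ref{prop:convexineq}, any convex combination $\lambda u + (1-\lambda)v$ with $u,v \in X_{\hat n}$ stays in $X_{\hat n}$ and obeys the convexity inequality there, while if either argument lies in $X \setminus X_{\hat n}$ the right-hand side is $+\infty$ and the inequality is trivial. For the coercivity estimate, I would use that $L \subset X_{\hat n}$ is finite dimensional, so the restriction $P_L := \tilde P_L|_{X_{\hat n}}$ is a bounded linear projection of $X_{\hat n}$ onto $L$; Theorem~\ref{thm:graphessentiallycoercive}~iii) then yields $C',D'$ with $\|u - P_L u\|_{X_{\hat n}} \leq C' R_\alpha(u) + D'$ for $u \in X_{\hat n}$, and multiplying by the embedding constant of $X_{\hat n} \hookrightarrow X$ gives \eqref{eq:coercextension}, the case $u \in X \setminus X_{\hat n}$ being trivial since then $R_\alpha(u) = \infty$.

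The substantive part is weak lower semi-continuity. I would take $(u^k)_k \rightharpoonup u$ in $X$, reduce to the case $\liminf_k R_\alpha(u^k) < \infty$, pass to a subsequence realizing the $\liminf$, and discard the terms with $R_\alpha(u^k) = \infty$ so that $u^k \in X_{\hat n}$ for all $k$ (a subsequence converges to the same weak limit $u$). Fixing a bounded linear projection $P_L : X_{\hat n} \to L$ and applying the coercivity estimate of Theorem~\ref{thm:graphessentiallycoercive}~iii) on $X_{\hat n}$, the sequence $v^k := u^k - P_L u^k$ is bounded in $X_{\hat n}$; by reflexivity of $X_{\hat n}$ it admits, along a further subsequence, a weak limit $v \in X_{\hat n}$.

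The crux is to show that the weak limit $u$ in fact lies in $X_{\hat n}$ and that $u^k \rightharpoonup u$ \emph{in $X_{\hat n}$}, so that weak lower semi-continuity of $R_\alpha$ on $X_{\hat n}$ — this is Theorem~\ref{thm:graphessentiallycoercive}~ii), where weak and weak* coincide by reflexivity — can be invoked. The difficulty is that a priori the sequence only converges in the coarser weak topology of $X$, and I would bridge the two topologies through the finite-dimensional part. Since $(u^k)_k$ is weakly convergent in $X$ it is bounded in $X$, and $(v^k)_k$ is bounded in $X$ via the embedding, so $P_L u^k = u^k - v^k$ is bounded in $X$; because $L$ is finite dimensional all norms on it are equivalent, whence $(P_L u^k)_k$ is bounded in $X_{\hat n}$ and has a subsequence converging \emph{strongly} in $X_{\hat n}$ to some $z \in L$. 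Combined with $v^k \rightharpoonup v$ in $X_{\hat n}$, this gives $u^k = v^k + P_L u^k \rightharpoonup v + z$ in $X_{\hat n}$, hence also in $X$ by continuity of the embedding, and uniqueness of weak limits in $X$ forces $u = v + z \in X_{\hat n}$. Weak lower semi-continuity on $X_{\hat n}$ then yields $R_\alpha(u) \leq \liminf_k R_\alpha(u^k)$. I expect the main obstacle to be exactly this transfer of convergence from the weak topology of $X$ to that of $X_{\hat n}$, which is why the decomposition into the coercive (reflexive, weakly compact) part $v^k$ and the finite-dimensional (strongly compact) part $P_L u^k$ is essential.
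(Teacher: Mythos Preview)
Your proposal is correct and follows essentially the same route as the paper's proof: convexity is trivial, the coercivity estimate is pulled back from $X_{\hat n}$ via the embedding, and weak lower semi-continuity is obtained by splitting $u^k$ into a part $v^k = u^k - P_L u^k$ that is bounded in $X_{\hat n}$ (hence weakly compact by reflexivity) and a part $P_L u^k$ that is bounded in $X$ and therefore, by finite dimensionality of $L$, strongly compact in $X_{\hat n}$, then identifying the limit via uniqueness of weak limits in $X$. Your write-up is in fact slightly more explicit than the paper's in justifying why boundedness of $P_L u^k$ in $X$ transfers to $X_{\hat n}$ and why weak and weak* lower semi-continuity on $X_{\hat n}$ coincide.
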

\begin{rem}\label{rem:reg_graph_extension_to_X}  It can be observed that, in the above result, reflexivity of $X_{\hat{n}}$ (instead of just requiring that bounded sequences admit weak* convergent subsequences) is only needed to conclude from (weak) convergence of $(v^k + P_Lu^k)_k$ to $v+z$ in $X_{\hat{n}}$ and the weak convergence of  $(v^k + P_Lu^k)_k$ to $u$ in $X$ that, by uniqueness of limits, $v+z=u$ follows. The same could be achieved for weak* convergence of $(v^k + P_Lu^k)_k$ in $X_{\hat{n}}$, thus not requiring reflexivity, if, for instance, $X_{\hat{n}} = L^\infty(\Omega)$ and $X = L^1(\Omega)$.
\end{rem}

\section{Predual formulation of regularization graphs} \label{sec:predual}
The goal of this section is to provide an equivalent, predual reformulation of regularization graphs. Remember that a regularization graph functional $R_\alpha:X_{\hat{n}} \rightarrow [0,\infty]$ can be written in a vectorized form as
\[
R_\alpha(u) = \inf \left\{ \Psi_u(v) \st v \in \rg(\Lambda_\alpha) \right\}
\]
with $\Lambda_\alpha $ and $\Psi_u$ given in \eqref{eq:vectorized_operator_graph} and \eqref{eq:vectorized psi_reduced}, respectively. With $\Lambda^\#_\alpha$ and $\Psi^\#_u$ predual versions of $\Lambda_\alpha $ and $\Psi_u$, respectively, our goal is to show that every regularization graph functional $R_\alpha$ can be written equivalently as
\[
R_\alpha(u)  =  -\inf \left\{  \Psi_u^\#(v)  \st v  \in \ker (\Lambda_\alpha^\#)\right\}. 
\]

To this aim, we need in particular that the functionals $\Psi_n$ and the operators $\Theta_e$ and $\Phi_e$ admit predual versions. By an application of  the Fenchel--Moreau theorem \cite[Proposition 4.1]{CAAV} it is easy to see that there exist
convex, proper, lower semicontinuous functionals $\Psi^\#_n:X_n^\# \rightarrow [0,\infty]$ such that their convex conjugates are $\Psi_n$.

\begin{lemma} \label{lem:predual_node_functional}
For each $n \in V$, $ \Psi_n:X_n \rightarrow [0,\infty]$ is the convex conjugate of a convex, proper, lower semicontinuous functional $\Psi^\#_n:X_n^\# \rightarrow [0,\infty]$.
\end{lemma}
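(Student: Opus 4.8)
The plan is to realize $\Psi_n^\#$ explicitly as the convex conjugate of $\Psi_n$ taken in the dual pairing between $X_n$ and its predual $X_n^\#$, and then to invoke the Fenchel--Moreau theorem to recover $\Psi_n$ as the conjugate of $\Psi_n^\#$. The first thing I would do is fix the correct duality: since $X_n$ admits the predual $X_n^\#$, we identify $X_n = (X_n^\#)^*$, so that the weak* topology on $X_n$ is precisely $\sigma(X_n, X_n^\#)$. Equipped with this locally convex topology, the continuous dual of $X_n$ is exactly $X_n^\#$, and hence $(X_n, X_n^\#)$ is a dual pair with pairing $\langle \xi, v\rangle$ for $\xi \in X_n^\#$, $v \in X_n$. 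Conjugation with respect to this pairing maps functions on $X_n$ to functions on $X_n^\#$, rather than on the potentially larger bidual $X_n^*$.

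Next I would define
\[
\Psi_n^\#(\xi) := \sup_{v \in X_n} \big( \langle \xi, v\rangle - \Psi_n(v) \big), \qquad \xi \in X_n^\#.
\]
As a pointwise supremum of continuous affine functions on $X_n^\#$, the functional $\Psi_n^\#$ is automatically convex and lower semicontinuous. Evaluating the supremum at $v=0$ and using Assumption \ref{ass:psi_0_is_0}, i.e.\ $\Psi_n(0)=0$, gives $\Psi_n^\#(\xi) \geq \langle \xi, 0\rangle - \Psi_n(0) = 0$ for every $\xi$, so $\Psi_n^\#$ indeed takes values in $[0,\infty]$ and in particular never equals $-\infty$. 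Finally I would apply the Fenchel--Moreau theorem in the pairing $(X_n, X_n^\#)$: the functional $\Psi_n$ is proper (since $\Psi_n(0)=0<\infty$ by \ref{ass:psi_0_is_0}), convex by assumption, and lower semicontinuous with respect to $\sigma(X_n, X_n^\#)$ by \ref{ass:lsc_functional}. Hence Fenchel--Moreau yields both that $\Psi_n^\#$ is proper, convex and lower semicontinuous and that $\Psi_n = (\Psi_n^\#)^*$, which is exactly the claimed statement. (Properness of $\Psi_n^\#$ can also be seen directly: were $\Psi_n^\# \equiv +\infty$, its conjugate would be $\equiv -\infty$, contradicting $\Psi_n \geq 0$.)

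The only genuinely delicate point is the bookkeeping of the duality in the second step: one must ensure that the conjugate of $\Psi_n$ lives on the predual $X_n^\#$ and not merely on the bidual $X_n^* \supseteq X_n^\#$. This is exactly what weak* lower semicontinuity in \ref{ass:lsc_functional}, as opposed to merely weak or norm lower semicontinuity, guarantees, since it is lower semicontinuity with respect to the topology $\sigma(X_n, X_n^\#)$ whose continuous dual is precisely $X_n^\#$. Everything else reduces to the standard properties of the Legendre--Fenchel conjugate.
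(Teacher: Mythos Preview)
Your proof is correct and follows essentially the same approach as the paper: you define $\Psi_n^\#$ as the conjugate of $\Psi_n$ with respect to the dual pairing $(X_n, X_n^\#)$, use Assumption~\ref{ass:psi_0_is_0} to obtain nonnegativity, and invoke Fenchel--Moreau (using weak* lower semicontinuity from~\ref{ass:lsc_functional}) to recover $\Psi_n = (\Psi_n^\#)^*$. Your write-up is in fact more explicit than the paper's about why the conjugate lands on the predual rather than the bidual and about properness, but the underlying argument is the same.
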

\begin{proof}
Consider the dual pair $(V,V^*)$ for $V = (X_n,\text{\textsf{w*}})$ and $V^* = (X^\#_n, \text{\textsf{w}})$. Note that $\Psi_n$ is convex, proper and lower semicontinuous on $V$. Therefore by the Fenchel--Moreau theorem \cite[Proposition 4.1]{CAAV} there holds that $\Psi_n^{**} = \Psi_n$. In particular, defining $\Psi^\#_n = \Psi_n^*$, we have that $\Psi^\#_n : X^\#_n \rightarrow [-\infty,+\infty]$ is proper, convex and strongly lower semicontinuous and its convex conjugate is $\Psi_n$. The positivity of $\Psi^\#_n$ follows from Assumption \ref{ass:psi_0_is_0}.
\end{proof}

Moreover, Remark \ref{rem:initialremark} ensures the existence of a bounded predual of $\Phi_{(n,m)}$ that we are going to denote by $\Phi^\#_{(n,m)} : X_{n}^\# \rightarrow  X_{(n,m)}^\# $.
Finally, we suppose that the operators $\Theta_e$ admit closed, densely defined preadjoints as stated in the following additional assumption. 
\begin{enumerate}[label=(H\arabic*)]
\setcounter{enumi}{8}
\item\label{ass:predual} For each $e= (n,m)\in E$, $\Theta_{(n,m)}$ is the adjoint of a closed, densely defined operator $\Theta_{(n,m)}^{\#} : \domain(\Theta_{(n,m)}^{\#}) \subset X^\#_m \rightarrow X^\#_{(n,m)}$.
\end{enumerate}

Define the following predual spaces of $X_V$ and $X_E$: 
 \begin{equation}
 X^\#_V = \bigtimes_{n\in V} X^\#_n  \quad \mbox{and} \quad  X^\#_E = \bigtimes_{e\in E} X^\#_e.
 \end{equation}

Now we characterize the predual of the linear operator $\Lambda_\alpha: X_E \rightarrow X_V$ from \eqref{eq:vectorized_operator_graph}.

\begin{lemma} Let $\Gc_\alpha$ be a regularization graph with root node $\hat{n}$ and corresponding operator $\Lambda_\alpha$ as in \eqref{eq:vectorized_operator_graph} such that \ref{ass:predual} holds. Then $\Lambda_\alpha$ is the dual of the linear operator $\Lambda_\alpha^\#: \domain(\Lambda_\alpha^\#)  \subset X^\#_V \rightarrow X^\#_E$ with
\begin{align*}
\domain(\Lambda_\alpha^\#) = \{v \in X^\#_V \, \st \forall (n,m) \in E,\ v_m \in \domain(\Theta^\#_{(n,m)})\} 
\end{align*}
given as
\begin{equation}\label{eq:preduallambda}
(\Lambda^\#_\alpha v)_{(n,m)} = \Theta^{\#}_{(n,m)} v_m - \alpha_{(n,m)}\Phi^{\#}_{(n,m)} v_n \quad \text{for all} \quad v \in \domain(\Lambda_\alpha^\#).
\end{equation}
\end{lemma}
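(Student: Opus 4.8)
The plan is to identify $\Lambda_\alpha$ with the Banach-space adjoint $(\Lambda_\alpha^\#)^*$ by proving the two standard ingredients: the adjoint (duality) identity on the relevant domains, and the coincidence $\domain((\Lambda_\alpha^\#)^*) = \domain(\Lambda_\alpha)$. As a preliminary I would check that $\Lambda_\alpha^\#$ is well defined and densely defined, so that $(\Lambda_\alpha^\#)^*$ is single-valued. Since the root $\hat n$ carries no incoming edge while every other node $m$ carries exactly one incoming edge $(m^-,m)$, the domain factors as $\domain(\Lambda_\alpha^\#) = X_{\hat n}^\# \times \bigtimes_{m \neq \hat n} \domain(\Theta_{(m^-,m)}^\#)$, a product of dense subspaces by \ref{ass:predual}, hence dense in $X_V^\#$.

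The first main step is the adjoint identity: for $v \in \domain(\Lambda_\alpha^\#)$ and $w \in \domain(\Lambda_\alpha) = \bigtimes_{e} \domain(\Theta_e)$, I would expand $\langle \Lambda_\alpha^\# v, w\rangle_{X_E^\#, X_E}$ edgewise via \eqref{eq:preduallambda}, then apply the adjoint relation $\langle \Theta_{(n,m)}^\# v_m, w_{(n,m)}\rangle = \langle v_m, \Theta_{(n,m)} w_{(n,m)}\rangle$ from \ref{ass:predual} and the predual relation $\langle \Phi_{(n,m)}^\# v_n, w_{(n,m)}\rangle = \langle v_n, \Phi_{(n,m)} w_{(n,m)}\rangle$ from Remark \ref{rem:initialremark}. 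The resulting double sum over edges is reorganized into a sum over nodes $p \in V$: by the bijection between edges and non-root nodes, the $\Theta^\#$-terms regroup as $\langle v_p, \Theta_{(p^-,p)} w_{(p^-,p)}\rangle$ (one per non-root node, none at the root), while the $\Phi^\#$-terms regroup as $-\langle v_p, \sum_{(p,m)\in E}\alpha_{(p,m)}\Phi_{(p,m)} w_{(p,m)}\rangle$ (one per node). Comparing with \eqref{eq:vectorized_operator_graph}, this collapses to $\langle v, \Lambda_\alpha w\rangle_{X_V^\#, X_V}$, which yields $\Lambda_\alpha \subseteq (\Lambda_\alpha^\#)^*$.

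The second step, which I expect to be the main obstacle, is the reverse domain inclusion $\domain((\Lambda_\alpha^\#)^*) \subseteq \domain(\Lambda_\alpha)$, since here the global continuity condition must be decoupled into a single membership $w_e \in \domain(\Theta_e)$ per edge. Given $w \in \domain((\Lambda_\alpha^\#)^*)$ with $(\Lambda_\alpha^\#)^* w = y \in X_V$, the device is to test against vectors supported on a single node: fix a non-root node $m_0$ with unique incoming edge $e_0 = (m_0^-,m_0)$ and take $v$ with $v_p = 0$ for $p \neq m_0$ and $v_{m_0} \in \domain(\Theta_{e_0}^\#)$ arbitrary, which lies in $\domain(\Lambda_\alpha^\#)$. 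Then $\langle \Lambda_\alpha^\# v, w\rangle = \langle v, y\rangle$ reduces, after converting the $\Phi^\#$-terms, to
\[
\langle \Theta_{e_0}^\# v_{m_0}, w_{e_0}\rangle = \langle v_{m_0}, y_{m_0}\rangle + \sum_{(m_0,m)\in E}\alpha_{(m_0,m)} \langle v_{m_0}, \Phi_{(m_0,m)} w_{(m_0,m)}\rangle .
\]
Because the unbounded operator appears only on the left, the right-hand side is a bounded linear functional of $v_{m_0}$ in the $X_{m_0}^\#$-norm, so the defining property of the adjoint $\Theta_{e_0} = (\Theta_{e_0}^\#)^*$ forces $w_{e_0} \in \domain(\Theta_{e_0})$. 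Running this over all non-root nodes $m_0$ — equivalently, by the tree bijection, over all edges $e_0 \in E$ — gives $w \in \bigtimes_{e} \domain(\Theta_e) = \domain(\Lambda_\alpha)$, completing the identification $(\Lambda_\alpha^\#)^* = \Lambda_\alpha$.
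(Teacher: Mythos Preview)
Your proposal is correct and follows essentially the same approach as the paper: both establish the adjoint identity by regrouping the edge-indexed pairing into a node-indexed sum via the tree bijection between non-root nodes and edges, and both decouple the domain condition by testing against vectors concentrated at a single node $m_0$. The only cosmetic difference is that the paper phrases the reverse domain inclusion as a contradiction (assuming $w_{(n,m)} \notin \domain(\Theta_{(n,m)})$ and producing an unbounded sequence $v_m^k$), whereas you argue directly that the right-hand side of the reduced identity defines a bounded functional of $v_{m_0}$; these are two sides of the same coin.
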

\begin{proof}
First, let us verify that $\domain((\Lambda_\alpha^\#)^*) = \domain(\Lambda_\alpha)$, where $\domain(\Lambda_\alpha) = \bigtimes_{e\in E} \domain (\Theta_e)$. Note that for every $v \in \domain(\Lambda_\alpha^\#)$ and $w \in X_E$ it holds
\begin{align}
\langle w ,\Lambda_\alpha^\# v \rangle & =  \sum_{(n,m)\in E} \langle w_{(n,m)} ,  
 \Theta^\#_{(n,m)} v_m - \alpha_{(n,m)}\Phi^{\#}_{(n,m)} v_n \rangle
  \nonumber \\ 
  & =  \sum_{m\in V} \sum_{(n,m) \in E}  \langle w_{(n,m)}, \Theta^\#_{(n,m)} v_m \rangle -\sum_{n\in V}  \sum_{(n,m) \in E}  \langle  w_{(n,m)}, \alpha_{(n,m)}\Phi^{\#}_{(n,m)} v_n \rangle. \label{eq:predomain}
\end{align} 
By the definition of the domain of the adjoint we have
\begin{align}\label{eq:defadj}
\domain((\Lambda_\alpha^\#)^*) & = \{w \in X_E \st \exists C>0 \ \text{ such that }\ \langle w,\Lambda_\alpha^\# v \rangle \leq C \|v\|_{X_V^\#} \quad \forall v \in \domain(\Lambda_\alpha^\#) \}.
\end{align}
Therefore, using \eqref{eq:predomain}, the boundedness of $\Phi_{(n,m)}$ and the fact that $\Theta_{(n,m)}$ is the adjoint of $\Theta_{(n,m)} ^{\#}$ (see Assumption \ref{ass:predual}), we immediately deduce that $\domain(\Lambda_\alpha)\subset \domain((\Lambda_\alpha^\#)^*)$. To prove the opposite inclusion, consider $w  = (w_e)_{e\in E} \notin \domain(\Lambda_\alpha)$. Then, there exists $(n,m) \in E$ such that $w_{(n,m)} \notin \domain(\Theta_{(n,m)})$.
Using that 
\begin{align*}
\domain(\Theta_{(n,m)})& = \domain((\Theta_{(n,m)}^\#)^*)\\
& = \{w \in X_{(n,m)} \st \exists C>0 \ \text{ such that }\ \langle w,\Theta^\#_{(n,m)} v \rangle \leq C \|v\|_{X_m^\#} \, \forall v \in \domain(\Theta_{(n,m)}^\#) \}
\end{align*}
(see Assumption \ref{ass:predual}), we can find a sequence $(v_m^k)_k$ with   $v^k_m \in \domain(\Theta_{(n,m)}^\#) \setminus \{0\}$ for each $k$ such that 
\begin{align}\label{eq:infin}
\lim_{k \rightarrow \infty}\frac{1}{\|v_m^k\|_{X_m^\#}}\langle w_{(n,m)},\Theta^\#_{(n,m)} v_m^k \rangle  = +\infty\,.
\end{align}
Then, with $v^k \in X_V^\#$ defined as $(v^k)_{\tilde{m}} = v^k_m$ if $\tilde{m}=m$ and $(v^k)_{\tilde{m}} = 0$ else, one notices that, using the boundedness of $\Phi_{e}^\#$ for every $e \in E$, there exists a positive constant $C$ independent on $k$ such that
\begin{align}
\frac{1}{\|v^k\|_{X_V^\#}}\langle w,\Lambda_\alpha^\# v^k \rangle & = \frac{1}{\|v_m^k\|_{X_m^\#}}\langle w_{(n,m)},\Theta_{(n,m)}^\# v_m^k \rangle - \sum_{\tilde{n}\, :\,  (m,\tilde{n}) \in E}\frac{\alpha_{(m,\tilde{n})}}{\|v_m^k\|_{X_m^\#}}\langle w_{(m,\tilde{n})},\Phi_{(m,\tilde{n})}^\# v_m^k \rangle  \nonumber\\
& \geq \frac{1}{\|v_m^k\|_{X_m^\#}}\langle w_{(n,m)},\Theta_{(n,m)}^\# v_m^k \rangle - C\label{eq:infin1}.
\end{align}
Therefore, combining \eqref{eq:infin1} and \eqref{eq:infin} with \eqref{eq:defadj}, we deduce that $w \notin \domain((\Lambda_\alpha^\#)^*)$ showing that    $\domain((\Lambda_\alpha^\#)^*) \subset \domain(\Lambda_\alpha)$.

It remains to prove \eqref{eq:preduallambda}. For every $v \in \domain(\Lambda_\alpha^\#)$ and $w \in \domain(\Lambda_\alpha)$, using \eqref{eq:predomain} we have
\begin{align*}
\langle(\Lambda_\alpha^\#)^*w,v \rangle &  = \langle w ,\Lambda_\alpha^\# v \rangle \\
 &   =  \sum_{m\in V \setminus \{\hat{n}\}}  \langle \Theta_{(m^-,m)} w_{(m^-,m)}, v_m \rangle - \sum_{m \in V}\sum_{(m,n) \in E}  \langle  \alpha_{(m,n)}\Phi_{(m,n)} w_{(m,n)}, v_m \rangle \\
&  =  \sum_{m\in V \setminus \{\hat{n}\}} \bigg\langle  \Theta_{(m^-,m)} w_{(m^-,m)} - \sum_{(m,n) \in E}   \alpha_{(m,n)}\Phi_{(m,n)} w_{(m,n)}, v_m  \bigg \rangle\\
& \quad  +  \bigg \langle - \sum_{(\hat{n},n) \in E}    \alpha_{(\hat n,n)}\Phi_{(\hat n,n)} w_{(\hat n,n)} , v_{\hat n} \bigg \rangle \\
& = \langle \Lambda_\alpha w, v\rangle
\end{align*} 
ending the proof. \qedhere
\end{proof}

With this, we define a predual regularization graph functional as follows.
\begin{dfnz}[Predual regularization graph functional] \label{def:predual_graph}
Given a regularization graph $\Gc_\alpha $ with root node $\hat{n}$ and associated functional $R_\alpha = R(\Gc_\alpha)$  such that \ref{ass:predual} holds, we define the predual regularization graph functional $R^\#_\alpha = R^\#(\Gc_\alpha) :  X_{\hat{n}} \rightarrow [0,+\infty]$ as
\begin{align*} 
R_\alpha^\#(u) 
& =  -\inf \left\{  \Psi^\#_{\hat{n}}(v_{\hat{n}}) - \langle u, v_{\hat{n}}\rangle + \sum_{n \in V \setminus \{\hat{n}\}} \Psi^\#_{n}(v_n)  \Biggst  (v_n)_{n \in V} \in  \domain (\Lambda_\alpha^\#),\,  \right. \\
& \left.\qquad \qquad \ \, \Theta^{\#}_{(n,m)} v_m = \alpha_{(n,m)}\Phi^{\#}_{(n,m)} v_n \, \text{ for all } \,  (n,m) \in  E\right\} \\
& =  -\inf \left\{  \Psi_u^\#(v)   \bigst v  \in \ker (\Lambda_\alpha^\#)\right\} 
\end{align*}
with $\Psi^\#_u(v) :=  \Psi^\#_{\hat{n}}(v_{\hat{n}}) - \langle u, v_{\hat{n}}\rangle + \sum_{n \in V \setminus \{\hat{n}\}} \Psi^\#_{n}(v_n) $, and $\Theta^\#_e$, $\Phi^\#_e$ and $\Psi_n^\#$ being the predual operators and functionals of $\Theta_e$, $\Phi_e$ and $\Psi_n$, respectively, according to hypothesis \ref{ass:predual}, Remark \ref{rem:initialremark} and Lemma \ref{lem:predual_node_functional}.
\end{dfnz}

Our goal is now to show that $R_\alpha^\# = R_\alpha$. As first step, we obtain the following proposition.
\begin{prop}\label{prop:duality_essentially_coercive_seminorms}
Assuming again that hypothesis \ref{ass:predual} holds, any predual regularization graph functional $R^\#_\alpha = R^\#(\Gc_\alpha)$ according to Definition \ref{def:predual_graph}, where $\Gc_\alpha$ is a regularization graph with root node $\hat{n}$, can be written as
\begin{equation}
R_\alpha^\#(u) = \min \left\{ \Psi_u(v) \bigst  v \in \overline{\rg(\Lambda_\alpha)}^{w^*} \right\},
\end{equation}
where $\Psi_u$ is defined as in \eqref{eq:vectorized psi_reduced}.
\end{prop}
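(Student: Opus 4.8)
The plan is to realize $R_\alpha^\#(u)$ as a Fenchel dual of the constrained minimization of $\Psi_u$ over the subspace $\overline{\rg(\Lambda_\alpha)}^{w^*}$, and to use coercivity to close the duality gap through a biconjugation argument, so that no constraint qualification is required.

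First I would record the two structural ingredients that make the pairing $(X_V,X_V^\#)$ work. On the one hand, combining Lemma \ref{lem:predual_node_functional} with the Fenchel--Moreau theorem node-wise (so that $(\Psi_n)^*=\Psi_n^\#$) and accounting for the shift by $u$ in the $\hat n$-component, one checks directly that $\Psi_u^\#$ is precisely the conjugate of $\Psi_u$ in the pairing $(X_V,X_V^\#)$, i.e.\ $\Psi_u^\#(v)=\sup_{w\in X_V}\langle w,v\rangle-\Psi_u(w)$; since $\Psi_u$ is convex, proper and weak* lower semicontinuous, this also gives $\Psi_u=(\Psi_u^\#)^*$. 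On the other hand, by the preceding lemma $\Lambda_\alpha=(\Lambda_\alpha^\#)^*$, and $\Lambda_\alpha^\#$ is densely defined (its domain only constrains the pairwise distinct components $v_m$, $m\neq\hat n$, to lie in the dense sets $\domain(\Theta_{(m^-,m)}^\#)$, the root component $v_{\hat n}$ being free) and closed (being assembled from the closed operators $\Theta_e^\#$ and the bounded operators $\Phi_e^\#$). Hence the standard annihilator identity for a closed, densely defined operator and its adjoint yields $M:=\overline{\rg(\Lambda_\alpha)}^{w^*}=(\ker \Lambda_\alpha^\#)^\perp$, with annihilator taken in $X_V=(X_V^\#)^*$; by the bipolar theorem and weak closedness of $\ker\Lambda_\alpha^\#$ one also has that the pre-annihilator satisfies ${}^\perp M=\ker\Lambda_\alpha^\#$.

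Next I would introduce the marginal functional $h:X_V\to[0,\infty]$, $h(w):=\inf\{\Psi_u(w+z)\st z\in M\}$, so that $h(0)=\inf\{\Psi_u(v)\st v\in M\}$ is exactly the right-hand side of the claim. The heart of the proof, and what I expect to be the main obstacle, is to show that $h$ is convex, proper and weak* lower semicontinuous, \emph{and} that the infimum defining it is attained; this is where coercivity and weak* compactness enter and replace a constraint qualification. Convexity and properness are immediate ($\Psi_u\ge 0$ and $h(w)\le \Psi_u(w)<\infty$ at the point where $u+v_{\hat n}=0$ and $v_n=0$, using \ref{ass:psi_0_is_0}). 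For lower semicontinuity I would argue by the direct method: $\Psi_u$ is weak* lower semicontinuous as a finite sum of the weak* lower semicontinuous $\Psi_n$, and by Remark \ref{rem:equivcoercivity} together with \ref{ass:coerc_functional} its sublevel sets are bounded. Given $w^k\weakstar w$ with $\liminf_k h(w^k)<\infty$, I select near-optimal $z^k\in M$ with $(\Psi_u(w^k+z^k))_k$ bounded; boundedness of the sublevel sets forces $(w^k+z^k)_k$, hence $(z^k)_k$, to be bounded, so by \ref{ass:weak_star_compactness} and weak* closedness of $M$ I may extract $z^k\weakstar z\in M$ and $w^k+z^k\weakstar w+z$, and conclude $h(w)\le \Psi_u(w+z)\le\liminf_k\Psi_u(w^k+z^k)\le\liminf_k h(w^k)$. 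Taking $w^k\equiv w$ in the same argument shows that the infimum is attained, so in particular $h(0)=\min\{\Psi_u(v)\st v\in M\}$.

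Finally I would conclude by biconjugation in the pairing $(X_V,X_V^\#)$. Since $h$ is convex, proper and weak* lower semicontinuous, Fenchel--Moreau gives $h=h^{**}$. A direct computation, substituting $y=w+z$ and using the preconjugate identity $\Psi_u=(\Psi_u^\#)^*$ together with the fact that the support function of the subspace $M$ equals the indicator of the pre-annihilator ${}^\perp M=\ker(\Lambda_\alpha^\#)$, yields
\[ h^*(v)=\Psi_u^\#(v)+\mI_{\ker(\Lambda_\alpha^\#)}(v).\]
Evaluating the biconjugate at the origin then gives
\[ h(0)=h^{**}(0)=\sup_{v\in X_V^\#}\big(-h^*(v)\big)=-\inf\{\Psi_u^\#(v)\st v\in\ker \Lambda_\alpha^\#\}=R_\alpha^\#(u),\]
and combining this with $h(0)=\min\{\Psi_u(v)\st v\in M\}=\min\{\Psi_u(v)\st v\in\overline{\rg(\Lambda_\alpha)}^{w^*}\}$ from the previous step yields the asserted identity, with the minimum attained. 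The only genuinely nontrivial point is the weak* lower semicontinuity of the marginal $h$; everything else is bookkeeping with conjugates and the annihilator relation.
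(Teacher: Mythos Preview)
Your proof is correct and complete. It takes a genuinely different route from the paper's argument, though both ultimately exploit the same hypothesis (coercivity of the node functionals) and the same annihilator identity $(\ker \Lambda_\alpha^\#)^\perp = \overline{\rg(\Lambda_\alpha)}^{w^*}$.

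The paper works on the predual side: coercivity of each $\Psi_n$ is translated, via Rockafellar's duality between coercivity and local boundedness of the conjugate, into continuity of $\Psi_n^\#$ at the origin. This makes $\domain(\Psi_u^\#)$ absorbing, which verifies the Attouch--Br\'ezis constraint qualification $\bigcup_{\lambda\ge 0}\lambda(\domain(\Psi_u^\#)+\ker\Lambda_\alpha^\#)=X_V^\#$ and yields strong duality together with attainment of the minimum in one stroke. Your argument stays on the primal side: you use coercivity of $\Psi_u$ directly to obtain bounded sublevel sets, and then weak* compactness and weak* closedness of $M$ give you weak* lower semicontinuity of the marginal $h$ and attainment of its infimum by the direct method; biconjugation of $h$ then closes the gap. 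Your approach is more self-contained in that it avoids citing the Attouch--Br\'ezis theorem, at the cost of having to establish lower semicontinuity of $h$ by hand. The paper's approach is shorter once one is willing to invoke that theorem, and it illustrates the standard mechanism by which coercivity on one side becomes a qualification condition on the other.
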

\begin{proof}
Since $\Psi_n^\#$ is proper, convex and lower semicontinuous according to Lemma \ref{lem:predual_node_functional}, the Fenchel--Moreau theorem \cite[Proposition 4.1]{CAAV} yields $\Psi_n^\# = (\Psi_n^\#)^{**}$. 
Hence, since $\Psi_n$ is coercive thanks to hypothesis \ref{ass:coerc_functional},  using \cite[Theorem 10]{rockafellar}, we deduce that $\Psi_n^\#$ is continuous in zero. Consequently, also $v \mapsto \Psi^\#_{\hat{n}}(v) - \langle u, v\rangle$ is continuous in zero, such that $\domain(\Psi_u)$ contains a neighborhood of zero and hence is absorbing.

In order to apply the Attouch--Br\'ezis theorem \cite[Corollary 2.2]{attouchbrezis}, we note that
\begin{align*}
 \bigcup_{\lambda \geq 0} \lambda (\text{dom}(\Psi_u^\#) + \ker (\Lambda_\alpha^\#)) &  = \bigcup_{\lambda \geq 0} \lambda \text{dom}(\Psi_u^\#)  + \ker (\Lambda_\alpha^\#) =  X_V.
\end{align*}
Therefore, noting that $(\Psi_u^\#)^* = \Psi_u$ we deduce that
\begin{align*}
-\inf \left\{ \Psi_u^\#(v) \bigst  v \in \ker (\Lambda_\alpha^\#) \right\} = \min \left\{ \Psi_u(v) \bigst  v \in (\ker (\Lambda_\alpha^\#))^\perp \right\},
\end{align*}
where the minimum is attained and $(\ker (\Lambda_\alpha^\#))^\perp$ denotes the annihilator of $\ker (\Lambda_\alpha^\#)$.
As $\Lambda_\alpha^\#$ is closed and densely defined we apply Remark 17 in \cite{brezisfunctionalanalysis} to deduce that $(\ker (\Lambda_\alpha^\#))^\perp = \overline{\rg(\Lambda_\alpha)}^{w*}$ and conclude.
\end{proof}

In the next proposition we use Theorem \ref{thm:graphessentiallycoercive} to prove that $\text{rg}(\Lambda_\alpha^\#)$ is weak*-closed, and hence obtain the desired duality formulation.
\begin{prop} Let $\Lambda_\alpha :X_E \rightarrow X_V$ be as in \eqref{eq:vectorized_operator_graph} corresponding to a regularization graph $\Gc_\alpha$. Then, $\rg(\Lambda_\alpha)$ is weak*-closed in $X_V$. If in addition \ref{ass:predual} holds and $R_\alpha = R(\Gc_\alpha) $ and $R_\alpha^\# = R^\#(\Gc_\alpha)$ are the primal and predual regularization graph functionals according to Definitions \ref{def:reg_graph} and \ref{def:predual_graph}, respectively, then 
\[R_\alpha^\# = R_\alpha.\]
\end{prop}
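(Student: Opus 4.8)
The plan is to reduce the entire statement to the weak*-closedness of $\rg(\Lambda_\alpha)$ — which is precisely the first assertion — and then to obtain $R_\alpha^\# = R_\alpha$ as an immediate corollary. Indeed, by \eqref{eq:reg_graph_compact} we have $R_\alpha(u) = \inf\{\Psi_u(v) \st v \in \rg(\Lambda_\alpha)\}$, and Corollary \ref{cor:existenceedges} guarantees that this infimum is attained, so that $R_\alpha(u) = \min\{\Psi_u(v) \st v \in \rg(\Lambda_\alpha)\}$. On the other hand, under \ref{ass:predual}, Proposition \ref{prop:duality_essentially_coercive_seminorms} gives $R_\alpha^\#(u) = \min\{\Psi_u(v) \st v \in \overline{\rg(\Lambda_\alpha)}^{w*}\}$. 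Since the inclusion $\rg(\Lambda_\alpha) \subseteq \overline{\rg(\Lambda_\alpha)}^{w*}$ always holds, once I know it is an equality, i.e.\ that $\rg(\Lambda_\alpha)$ is weak*-closed, the two minimization problems coincide and $R_\alpha^\# = R_\alpha$ follows. Note that weak*-closedness itself does not require \ref{ass:predual}, so the two claims separate exactly as stated.

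To prove that $\rg(\Lambda_\alpha)$ is weak*-closed, I would first invoke the Krein--Smulian theorem, by which it suffices to show that $\rg(\Lambda_\alpha) \cap B_R$ is weak*-closed for every closed ball $B_R \subset X_V$. On bounded sets the compactness furnished by \ref{ass:weak_star_compactness} applies, so the task reduces to the following: given $v^k = \Lambda_\alpha w^k$ with $(v^k)_k$ bounded and $v^k \weakstar v$ in $X_V$, produce $w \in \domain(\Lambda_\alpha)$ with $\Lambda_\alpha w = v$. The difficulty is that the preimages $w^k$ need not be bounded even though $(v^k)_k$ is, because each $\Theta_e$ is unbounded with a nontrivial, finite-dimensional kernel; hence $(w^k)_k$ must first be replaced by a better-behaved family without changing its image.

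The heart of the argument is then to propagate boundedness through the graph and to renormalize the edge variables — exactly the mechanism already encapsulated in Lemma \ref{lem:general_existence} and used in the proof of Theorem \ref{thm:graphessentiallycoercive}. Concretely, I would argue by induction on the height, peeling off the root as in Lemma \ref{lem:primal_graph_recursion}: for a leaf node $m$ the component $v_m^k = \Theta_{(m^-,m)} w^k_{(m^-,m)}$ is bounded, so by the Poincaré-type estimate \ref{ass:coerc_fwd_operator} and the finite-dimensionality of $\ker(\Theta_{(m^-,m)})$ (\ref{ass:ker_fwd_operator}) the edge variable into $m$ is bounded modulo its kernel; continuity of the backward operators (\ref{ass:cont_bkw_operator}) then transfers this boundedness one level towards the root, where the corresponding component of $v^k$ is again bounded, and so on. In this way I obtain modified variables $\tilde w^k$ with $\tilde w^k - w^k \in \ker(\Lambda_\alpha)$ (so $\Lambda_\alpha \tilde w^k = v^k$) such that both $(\tilde w^k)_k$ and $(\Theta_e \tilde w^k_e)_k$ are bounded for every $e$. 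Using \ref{ass:weak_star_compactness} I extract weak*-convergent subsequences $\tilde w^k_e \weakstar w_e$ and $\Theta_e \tilde w^k_e \weakstar z_e$; weak*-closedness of each $\Theta_e$ (\ref{ass:closed_fwd_operator}) forces $w_e \in \domain(\Theta_e)$ and $z_e = \Theta_e w_e$, while weak*-to-weak* continuity of each $\Phi_e$ (\ref{ass:cont_bkw_operator}) lets me pass to the limit in every component of $\Lambda_\alpha \tilde w^k$. This yields $\Lambda_\alpha w = v$, so $v \in \rg(\Lambda_\alpha)$, completing the proof of weak*-closedness and hence of the proposition.

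I expect the main obstacle to be the bookkeeping in the boundedness propagation: because each edge variable $w_e$ with $e=(n,m)$ enters two components of $\Lambda_\alpha w$ simultaneously — through $\Theta_e$ in component $m$ and through $\alpha_e\Phi_e$ in component $n$ — the variables cannot be controlled independently, and the modification by kernel elements must be organized so that it does not spoil boundedness already gained at deeper levels. This is precisely the role of the finite-dimensional spaces $M^{\hat e} = \Theta_{\hat e}^{-1}(L^{\hat e})$ and the projections $P^{\hat e}$ constructed in the proof of Theorem \ref{thm:graphessentiallycoercive}, and I would reuse that construction rather than re-derive it.
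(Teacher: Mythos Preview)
Your proposal is essentially the same as the paper's: both reduce the identity $R_\alpha^\#=R_\alpha$ to weak*-closedness of $\rg(\Lambda_\alpha)$ via Proposition~\ref{prop:duality_essentially_coercive_seminorms}, and both establish this closedness by induction on the height, modifying the root-edge variables through the finite-dimensional projections of Lemma~\ref{lem:general_existence}/Theorem~\ref{thm:graphessentiallycoercive}. Two points of comparison are worth recording. First, your invocation of Krein--Smulian is a genuine refinement: the paper argues directly with weak*-convergent sequences, which strictly speaking shows only sequential weak*-closedness; your reduction to bounded balls, combined with \ref{ass:weak_star_compactness}, closes that gap cleanly for the subspace $\rg(\Lambda_\alpha)$. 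Second, the paper is more explicit on the correction step you flag as the ``main obstacle'': it replaces the node functionals by norms to build auxiliary subtree functionals $R^{\hat e}_{\alpha^{\hat e}}$ (so that boundedness of $(\Lambda_\alpha w^k)_k$ directly controls $R^{\hat e}_{\alpha^{\hat e}}(\Theta_{\hat e}w^k_{\hat e})$), and then uses Corollary~\ref{cor:existenceedges} to manufacture subtree corrections $z^k_{E^{\hat e}}$ ensuring $\Lambda_\alpha\tilde w^k=\Lambda_\alpha w^k$ after modifying the root-edge variables. Your plan to ``reuse the construction'' from Theorem~\ref{thm:graphessentiallycoercive} leads to the same place, but note that the paper does \emph{not} make all $(\tilde w^k_e)_e$ bounded simultaneously as you suggest; it only bounds the root-edge variables and then applies the induction hypothesis (closedness of $\rg(\Lambda^{\hat e}_{\alpha^{\hat e}})$) on the subtrees.
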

\begin{proof}
We are only going to show weak* closedness of $\text{rg}(\Lambda_\alpha)$ since, under assumption \ref{ass:predual}, the assertion $R_\alpha^\# = R_\alpha$ then follows as immediate consequence of Proposition \ref{prop:duality_essentially_coercive_seminorms} and the definition of $R_\alpha$. Assume that the result holds true for all operators corresponding to a regularization graph of height less than $h$ and let $\Gc_\alpha$ be a regularization graph of height $h$ with corresponding operator $\Lambda_\alpha$ and associated directed graph $G = (V,E)$. The case $h=0$ is immediate since $\text{rg}(\Lambda_\alpha) = \{0\}$.

Denote by $\hat{n}$ the root node of $\Gc_\alpha$  and let $\hat{E}$ be the edges connected to the root node and $\hat{V}$ their corresponding endpoints. 
Further, for $\hat{e} = (\hat{n},\hat{n}^{\hat{e}}) \in \hat{E}$, denote by $G^{\hat{e}} = (V^{\hat{e}},E^{\hat{e}})$ the subtree of $G$ with root node $\hat{n}^{\hat{e}}$ and by $\Lambda^{\hat{e}}_{\alpha^{\hat{e}}}$ the operator corresponding to the subtree $G^{\hat{e}}$. Then, we define $\Gc^{\hat{e}}_{\alpha^{\hat{e}}}$ to be the regularization graph with structure $\Gc^{\hat{e}} = (G^{\hat{e}},(\|\cdot \|_{X_{n}})_{n \in V^{\hat{e}}},(\Theta_e)_{e \in E ^{\hat{e}}},(\Phi_e)_{e \in E^{\hat{e}}})$ and weights $\alpha^{\hat{e}} = (\alpha_{e})_{e \in E^{\hat{e}}}$. It follows that $\Gc^{\hat{e}}_{\alpha^{\hat{e}}}$ is indeed a regularization graph and that the associated functional $R^{\hat{e}}_{\alpha^{\hat{e}}} = R(\Gc^{\hat{e}}_{\alpha^{\hat{e}}}):X_{n^{\hat{e}}} \rightarrow [0,\infty)$ is given as 
\begin{equation} \label{eq:duality_rl_vec_definition}
 R^{\hat{e}}_{\alpha^{\hat{e}}}(u) = \inf \{ \Upsilon^{\hat{e}}_u(v) \st v \in \rg(\Lambda ^{\hat{e}}_{\alpha^{\hat{e}}}) \} 
 \end{equation}
with $\Upsilon^{\hat{e}}_u(v):= \| u + v_{\hat{n}^{\hat{e}}}\|_{X_{\hat{n}^{\hat{e}}}} + \sum_{n \in V^{\hat{e}} \setminus \{ \hat{n}^{\hat{e}}\}}  \|v_n\|_{X_n}$.

Now take $(\Lambda_{\alpha}w^k)_k $ with $(w^k)_k$ in $\domain(\Lambda_\alpha)$ to be a sequence in $\rg(\Lambda_\alpha)$ weak*-converging to some $y \in X_V  = \bigtimes_{n\in V} X_n$. Then, we note that by the definition of $R^{\hat{e}}_{\alpha^{\hat{e}}}$ and $\Lambda ^{\hat{e}}_{\alpha^{\hat{e}}}$ we have that
\[R^{\hat{e}}_{\alpha^{\hat{e}}} (\Theta_{\hat{e}} w^k_{\hat{e}}) \leq 
   \sum_{n \in V^{\hat{e}}} \|(\Lambda_\alpha w^k)_{n}\|_{X_n} < \infty.
\]
Further, defining $w^k_{\hat{E}}  = (w^k_{\hat{e}})_{\hat{e} \in \hat E}$ and, for $w_{\hat{E}}= (w_{\hat{e}})_{\hat{e} \in \hat{E}}$, $ \Theta_{\hat{E}} w_{\hat{E}} = (\Theta_{\hat{e}}w_{\hat{e}})_{\hat{e} \in \hat{E}}$,
\[K w_{\hat{E}}:= \sum_{\hat{e} \in \hat{E}} \alpha_{\hat{e}} \Phi_{\hat{e}}w_{\hat{e}},\quad \text{and}\quad F((v_{\hat{n}^{\hat{e}}})_{\hat{e}\in \hat{E}}) := \sum_{\hat{e} \in \hat{E}}R^{\hat{e}}_{\alpha^{\hat{e}}} ( v_{\hat{n}^{\hat{e}}}),\]
	we observe that both $(K w^k_{\hat{E}})_k$ and $((F\circ \Theta_{\hat{E}})(w^k_{\hat{E}}))_k$ are bounded. Hence, using Lemma \ref{lem:general_existence}, we can choose $(\tilde{w}^k_{\hat{E}})_k$ such that $w^k_{\hat{E}}-\tilde{w}^k_{\hat{E}} \in \ker(K) \cap \Theta_{\hat{E}}^{-1}(L_{\hat{E}})$ and both $(\tilde{w}^k_{\hat{E}})_k$ and $(\Theta_{\hat{E}} \tilde{w}^k_{\hat{E}})_k$ are bounded, where $L_{\hat{E}}$ is the invariant subspace of $F$ is given as $L_{\hat{E}} = \bigtimes_{\hat{e} \in \hat{E}} L_{\hat{e}} $ with $L_{\hat{e}}$ the invariant subspace of $R^{\hat{e}}_{\alpha^{\hat{e}}}$ and the respective projection onto $L_{\hat E}$ is given as $P_{L_{\hat E}}w_{\hat E} = (P_{L_{\hat e}}w_{\hat e})_{\hat e \in \hat E}$, with $P_{L_{\hat e}}$ the projection onto $L_{\hat e}$. Hence, up to taking a further non-relabeled subsequence, we can assume that both $(\tilde{w}^k_{\hat{E}})_k$ and $(\Theta_{\hat{E}} \tilde{w}^k_{\hat{E}})_k$ are weak* converging, such that, by weak* closedness of $\Theta_{\hat{E}}$, for $w_{\hat{E}}:=\text{\textsf{w*}-}\lim_k \tilde{w}^k_{\hat{E}}$ we have that $\Theta_{\hat{E}} w_{\hat{E}}=\text{\textsf{w*}-}\lim_k \Theta_{\hat{E}} \tilde{w}^k_{\hat{E}}$.

Now since the infimum in the definition of $R^{\hat{e}}_{\alpha^{\hat{e}}}$ as in \eqref{eq:duality_rl_vec_definition} is attained thanks to Corollary \ref{cor:existenceedges}, and since $R^{\hat{e}}_{\alpha^{\hat{e}}} ( \Theta_{\hat{e}}( w^k_{\hat{e}}-\tilde{w}^k_{\hat{e}}) ) = 0$, there exist  minimizers $z_{E^{\hat{e}}}^k \in \domain(\Lambda^{\hat{e}}_{\alpha^{\hat{e}}})$ such that $0 = \Theta_{\hat{e}}( \tilde{w}^k_{\hat{e}}-w^k_{\hat{e}}) - (\Lambda^{\hat{e}}_{\alpha^{\hat{e}}}z_{E^{\hat{e}}}^k)_{\hat{n}^{\hat{e}}}$ and $0 = (\Lambda^{\hat{e}}_{\alpha^{\hat{e}}}z_{E^{\hat{e}}}^k)_{n}$ for all $n \in V^{\hat{e}}\setminus\{\hat{n}^{\hat{e}}\}$.
Defining, with $w^k_{E^{\hat e}} = (w^k_{e})_{e \in E^{\hat{e}}}$, 
\[ 
\tilde{w}^k := (\tilde{w}^k_{\hat{E}} ,  (w^k_{E^{\hat{e}}} -  z_{E^{\hat{e}}}^k) _{\hat e \in \hat E})
\]
we observe that $(\Lambda_\alpha \tilde{w}^k)_{\hat{n}} =(\Lambda_\alpha w^k)_{\hat{n}} $ since $w^k_{\hat E} - \tilde w^k_{\hat E} \in \ker(K)$, and
\[ (\Lambda_\alpha \tilde{w} ^k)_{\hat n^{\hat{e}}} = \Theta_{\hat{e}} \tilde{w}_{\hat{e}}^k   - (\Lambda^{\hat{e}}_{\alpha^{\hat{e}}} (w^k_{E^{\hat{e}}} -  z_{E^{\hat{e}}}^k))_{\hat n^{\hat{e}}} = \Theta_{\hat{e}} w_{\hat{e}}^k   - (\Lambda^{\hat{e}}_{\alpha^{\hat{e}}} w^k_{E^{\hat{e}}}  )_{\hat n^{\hat{e}}} = (\Lambda_\alpha w^k)_{\hat n^{\hat{e}}},
\]
where in the first equality we used the definition of $\tilde w^k$ and in the second equality the fact that $0 = \Theta_{\hat{e}}( \tilde{w}^k_{\hat{e}}-w^k_{\hat{e}}) - (\Lambda^{\hat{e}}_{\alpha^{\hat{e}}}z_{E^{\hat{e}}}^k)_{\hat{n}^{\hat{e}}}$.
Also, since $0 = (\Lambda^{\hat{e}}_{\alpha^{\hat{e}}}z_{E^{\hat{e}}}^k)_{n}$ for all $n \in V^{\hat{e}}\setminus\{\hat{n}^{\hat{e}}\}$ we have
\[ (\Lambda_\alpha \tilde{w}^k)_{n} = (\Lambda^{\hat{e}}_{\alpha^{\hat{e}}} (w_{E^{\hat{e}}}^k - z_{E^{\hat{e}}}^k) )_{n} = (\Lambda^{\hat{e}}_{\alpha^{\hat{e}}} w_{E^{\hat{e}}}^k )_n = (\Lambda_\alpha w^k)_{n}
\]
for $n \in V^{\hat{e}} \setminus \{ \hat n^{\hat{e}}\}$, ${\hat{e}} \in \hat{E}$.

This implies that also $\Lambda_\alpha \tilde{w}^k \weakstar y$ in $X_V$ and, since $(\Theta_{\hat{E}} \tilde{w}^k_{\hat{E}})_k$ is weakly* convergent, that also $(\Lambda^{\hat{e}}_{\alpha^{\hat{e}}} \tilde w^k_{E^{\hat{e}}})_k$, with $ \tilde w^k_{E^{\hat{e}}} := w^k_{E^{\hat{e}}} -  z_{E^{\hat{e}}}^k$,
  is weakly* convergent for each $\hat{e} \in \hat{E}$.
 By induction hypothesis, there hence exist $w_{E^ {\hat{e}}} \in \domain (\Lambda^{\hat{e}}_{\alpha^{\hat{e}}} )$ such that $\text{\textsf{w*}-}\lim_k \Lambda^{\hat{e}}_{\alpha^{\hat{e}}}  \tilde w^k_{E^{\hat{e}}} = \Lambda^{\hat{e}}_{\alpha^{\hat{e}}}  w_{E^{\hat{e}}}$. Defining $w=\left(w_{\hat{E}} , (w_{E^ {\hat{e}}})_{\hat{e}\in \hat{E}}  \right) $ we finally see that $\Lambda_\alpha w = y$, since, from $\Lambda_\alpha \tilde{w}^k \weakstar y$ it follows that
 \[ (\Lambda_\alpha w)_{\hat{n}} = Kw_{\hat{E}} = \text{\textsf{w*}-}\lim_k K \tilde w^k_{\hat{E}} = y_{\hat{n}},
 \]
 \[(\Lambda_\alpha w)_{n^{\hat{e}}} = \Theta_{\hat{e}}w_{\hat{e}}  - (\Lambda^{\hat{e}}_{\alpha^{\hat{e}}} w^k_{E^{\hat{e}}} )_{\hat n^{\hat{e}}} = \text{\textsf{w*}-}\lim_k \left( \Theta_{\hat{e}}\tilde	w^k_{\hat{e}} - (\Lambda^{\hat{e}}_{\alpha^{\hat{e}}}  \tilde w^k_{E^{\hat{e}}})_{n^{\hat{e}}} \right) 
 =  y_{n^{\hat{e}}}
 \]
 for each $\hat{e}\in \hat{E}$, and
 \[(\Lambda_\alpha w)_{n} = (\Lambda^{\hat{e}}_{\alpha^{\hat{e}}} w_{E^{\hat{e}}} )_{n} = \text{\textsf{w*}-}\lim_k  (\Lambda^{\hat{e}}_{\alpha^{\hat{e}}}  \tilde w^k_{E^{\hat{e}}})_{n} = y_{n}
 \]
 for each $n \in V^ {\hat{e}} \setminus \{n^{\hat{e}}\}$, $\hat{e} \in \hat{E}$. This completes the proof.
\end{proof}

\subsection{Examples of predual regularization graph functionals} 
Here we provide predual regularization graph functionals for several examples introduced in Section \ref{sec:examples} by verifying the additional assumption \ref{ass:predual}. We represent such predual regularization graphs as in Figure \ref{fig:predualreggraphs}.  In this context, we denote by $\mI_{B_p}$ the indicator function of the $L^p$ unit ball for $p \in [1,\infty]$. Note that, for sake of clarity, in the root node of each predual regularization graph we write the corresponding functional $v_{\hat n} \mapsto \Psi^\#_{\hat{n}}(v_{\hat{n}}) - \langle u, v_{\hat{n}}\rangle$ and not just $ \Psi^\#_{\hat{n}}$. Moreover, nodes represented by an empty circle are associated with the zero functional.

\medskip

\noindent \textbf{Total variation.} Figure \ref{fig:regtvdual} shows a predual regularization graph for $\TV$. We refer to Section \ref{sec:examples} for the construction of the regularization graph realizing $\TV$. We remind also that $X_1 = L^p(\Omega)$ and $X_2 = \mathcal{M}(\Omega, \R^d)$ where $1< p \leq d'$ with $d'=d/(d-1)$ in case $d>1$ and $d'=\infty$ else.
Therefore the predual Banach spaces associated with the nodes are $X_1^\# = L^{p'}(\Omega)$ and $X_1^\# = C_0(\Omega, \R^d)$ where $p'$ satisfies $1/p + 1/p' =1$. 
Moreover, it is easy to see that the convex conjugate of $\mI_{B_\infty}$ on $C_0(\Omega,\R^d)$ is $\|\cdot \|_\M$, %
and the convex conjugate of the zero function is $\mI_{\{0\}}$.
To show \ref{ass:predual}, we further claim that  $\nabla : \BV(\Omega) \subset L^{d'}(\Omega) \rightarrow \mathcal{M}(\Omega, \R^d)$ is the adjoint of the negative divergence operator $-{\rm div} : \domain(-{\rm div}) \subset C_0(\Omega, \R^d) \rightarrow L^d(\Omega)$ defined in the weak sense as
\begin{align}\label{eq:weakdiv}
\langle -\div \varphi , \psi \rangle = \int_\Omega \varphi \cdot \nabla \psi\, {\rm d}x \quad \forall \psi \in C^1(\Omega)
\end{align}
with domain
\begin{equation}
\domain(-{\rm div}) = \{\varphi \in C_0(\Omega, \R^d) \st \div \varphi \in L^d(\Omega)\}.
\end{equation}
Note that from \eqref{eq:weakdiv}, using a simple density argument we obtain that 
 $\domain(-{\rm div})$ is densely defined and closed in $C_0(\Omega, \R^d)$.
To show that $\nabla$ is the adjoint of $-\div$ according to the definitions above, it is enough to observe that 
\begin{displaymath}
\domain(-\text{div}^*) = \left\{u \in L^{d'}(\Omega) \Bigst \exists c>0 \text{ such that } \int_\Omega u\div \varphi\, {\rm d}x \leq c \|\varphi\|_\infty \ \forall \varphi \in \domain(-\text{div})\right\}
\end{displaymath}
implying that $\domain(-\text{div}^*) = \BV(\Omega)$ and $(-{\rm div})^* = \nabla$.
The predual regularization graph functional $R_\alpha^\#  : L^p(\Omega) \rightarrow [0,+\infty]$ is given as
\begin{align*}
R_\alpha^\#(u) 
& = -\inf \left\{ - \langle u, v_1\rangle +  \mI_{B_\infty} (v_2)   \bigst v_1 \in L^{p'} (\Omega), \,v_2 \in \domain(-{\rm div}),\,  v_1 = -\div v_2\right\}\\
& = \sup \left\{\int_\Omega u\, \div v\, {\rm d}x  \Bigst v\in C_0(\Omega, \R^d), \,\div v \in L^{p'}(\Omega), \, \|v\|_\infty \leq 1\right\}.
\end{align*}
\medskip

\noindent \textbf{Infimal convolution of $\TV^{k_1}-\TV^{k_2}$.} Figure \ref{fig:reginfconvdual} shows a predual regularization graph corresponding to the infimal convolution of $\TV^{k_1}$ and $\TV^{k_2}$ with $k_1,k_2 \in \N$. We refer to Section \ref{sec:examples} for the construction of the regularization graph realizing the infimal convolution of $\TV^{k_1}-\TV^{k_2}$. We remind also that $p$ is chosen such that $1< p \leq \min\{d'_1,d'_2\}$, where $d_i'=d/(d-k_i)$ in case $d_i>k_i$ and $d_i'=\infty$ else.
Similarly to the $\TV$ predual regularization graph, the pre-adjoint of each linear operator $\nabla^{k_i} : L^{d_i'}(\Omega) \rightarrow \mathcal{M}(\Omega,\Sym^{k_i}(\R^d))$ can be seen to be the (possibly negative) higher-order divergence $(-1)^{k_i}\div^{k_i} : \domain((-1)^{k_i}{\rm div}^{k_i}) \subset C_0(\Omega,\Sym^{k_i}(\R^d)) \rightarrow L^{d_i}(\Omega)$ 
defined in the weak sense as
\begin{align}\label{eq:weakdivh}
\langle (-1)^{k_i}\div^{k_i} \varphi , \psi \rangle = (-1)^{k_i+1}\int_\Omega \varphi \cdot \nabla^{k_i} \psi\, {\rm d}x \quad \forall \psi \in C^{k_i}(\Omega)
\end{align}
with domain
\begin{equation}
\domain((-1)^{k_i}{\rm div}^{k_i}) = \{\varphi \in C_0(\Omega, \Sym^{k_i}(\R^d)) \st  \div^{k_i} \varphi \in L^{d_i}(\Omega)\}
\end{equation}
which is again closed and densely defined, showing \ref{ass:predual}.
The predual regularization graph functional $R_\alpha^\# : L^p(\Omega) \rightarrow [0,+\infty]$ is given as
\begin{align*}
R_\alpha^\#(u) &= -\inf \left\{- \langle u, v_1\rangle + \mI_{B_\infty}(v_2) + \mI_{B_\infty}(v_3)  \bigst v_1 \in L^{p'}(\Omega),\, v_2 \in \domain((-1)^{k_1}{\rm div}^{k_1}),\, \right.\\
&   \qquad \qquad \qquad v_3 \in \domain((-1)^{k_2}{\rm div}^{k_2}),\, 
 \left.v_1 = (-1)^{k_1}\div^{k_1} v_2,\,  \alpha v_1 = (-1)^{k_2}\div^{k_2} v_3 \right\}  \\
& = \sup \bigg\{\int_\Omega u \div^{k_1} u_1 \,{\rm d}x  \Bigst u_i\in C_0(\Omega, \Sym^{k_i}(\R^d)),\, \div^{k_i} u_i \in L^{p'}(\Omega)\\
& \qquad \qquad \qquad \qquad  \qquad  \qquad  \quad \div^{k_2} u_2 = \alpha \div^{k_1} u_1,\,  \|u_i\|_\infty \leq 1,\, i=1,2  \bigg\}.
\end{align*}

\medskip

\textbf{Total generalized variation of order $2$.}
Figure \ref{fig:regtgv2dual} shows a predual regularization graph for $\TGV_\alpha^2$. We refer to Section \ref{sec:examples} for the construction of the regularization graph realizing the total generalized variation of order $2$. We remind also that $p$ is chosen such that $1< p \leq d'$ where $d'=d/(d-1)$ in case $d>1$ and $d'=\infty$ else.
The pre-adjoint of $\nabla : \BV(\Omega) \subset L^{d'}(\Omega) \rightarrow \mathcal{M}(\Omega, \R^d)$ is given as in the $\TV$ example.
Moreover, a pre-adjoint of $\mathcal{E} : \BD(\Omega) \subset L^{d'}(\Omega, \R^d) \rightarrow \mathcal{M}(\Omega, \Sym^2(\R^d))$ is the negative divergence operator $-{\rm div} :\domain(-{\rm div})\subset  C_0(\Omega, \Sym^2(\R^d)) \rightarrow L^d(\Omega, \R^d)$ defined in the weak sense as in \eqref{eq:weakdiv} with domain
\begin{equation}
\domain(-{\rm div}) = \{\varphi \in C_0(\Omega, \Sym^2(\R^d)) \st  \div \varphi \in L^d(\Omega, \R^d)\},
\end{equation}
which is again densely defined and closed, showing \ref{ass:predual}. 
The predual regularization graph functional $R_\alpha^\# : L^p(\Omega) \rightarrow [0,+\infty]$ is for $\alpha>0$ given as
\begin{align*}
R_\alpha^\#(u) &= -\inf \left\{- \langle u, v_1\rangle +  \mI_{\{\|v\|_\infty \leq 1\}}(v_3) + \mI_{\{\|v\|_\infty \leq 1\}}(v_4)  \bigst v_1 \in L^{p'}(\Omega),\, v_2 \in \domain (-{\rm div}),\right.\\
& \left.\qquad \qquad v_3 \in C_0(\Omega, \R^d),\, v_4 \in \domain (-{\rm div}),\, v_1 = -\div v_2,\,  v_2 = v_3,\,  \alpha  v_2 = -\div v_4 \right\}  \\
& = \sup \bigg\{\int_\Omega u\,  {\rm div}^2 v\, {\rm d}x  \Bigst v\in C_0(\Omega, \Sym^2(\R^d)),\  \div v\in C_0(\Omega, \R^d),\, {\rm div}^2 v\in L^{p'}(\Omega, \R^d)\\
& \qquad \qquad  \|\div v\|_\infty \leq 1,\, \|\alpha v\|_\infty \leq 1 \bigg\}.
\end{align*}

\medskip

\noindent \textbf{$\TGV^2$-shearlet infimal convolution.}
Figure \ref{fig:shearletdual} shows a predual regularization graph for the $\TGV^2$-shearlet infimal convolution model.
We refer to Section \ref{sec:examples} for the construction of the regularization graph realizing for the $\TGV^2$-shearlet infimal convolution. We also remind that the exponent $p$ is chosen as $1< p \leq 2$.
Note that a predual of the extension to infinity of the $\ell^1$ norm is the indicator function of the unit ball of $c_0$, denoted by $\mathcal{I}_{c_0} : \ell^2(\Z^4) \rightarrow [0,+\infty]$. Thanks to the closedness of $c_0 \cap \ell^2$ in $\ell^2$, such indicator function is lower semicontinuous. 
Moreover, as $\mathcal{SH} : L^2(\R^2) \rightarrow \ell^2(\Z^4)$ defined according to \eqref{eq:sheardef} is a bounded operator between Hilbert spaces, its pre-adjoint exists and is bounded, showing \ref{ass:predual}. Further, it can be easily characterized for $v \in \ell^2(\Z^4)$ as
\begin{align*}
\mathcal{SH}^\# v = \sum_{j,k \in \Z , \,m \in \Z^2} v_{j,k,m} \Psi_{j,k,m}.
\end{align*}
Finally, noticing the the pre-adjoint of the restriction operator $r_\Omega$ is the extension to zero outside $\Omega$ (denoted by $r_\Omega^0$), the predual regularization graph functional $R_\alpha^\# : L^p(\Omega) \rightarrow [0,+\infty]$ is for $\alpha_0, \alpha_1>0$ given as
\begin{align*}
R_\alpha^\#(u) &= -\inf \left\{- \langle u, v_1\rangle +  \mI_{\{\|v\|_\infty \leq 1\}}(v_3) + \mI_{\{\|v\|_\infty \leq 1\}}(v_4) + \mathcal{I}_{c_0}(v_5) \bigst v_1 \in L^{p'}(\Omega), \right. \\
& \qquad \qquad\left. v_2 \in \domain (- {\rm div}),\, v_3 \in C_0(\Omega, \R^2),\, v_4 \in   \domain (- {\rm div}),\, v_5 \in \ell^2(\Z^4),\, v_1 = -\div v_2, \right.\\
& \left. \qquad \qquad \qquad \qquad\alpha_0 r_\Omega^0 v_1  = \mathcal{SH}^\# v_5  ,\,  v_2 = v_3,\,  \alpha_1  v_2 = -\div v_4 \right\}  \\
& = \sup \bigg\{\int_\Omega u\,  {\rm div}^2 v\, {\rm d}x  \Bigst v\in C_0(\Omega, \Sym(\R^2)),\  \div v\in C_0(\Omega, \R^2),\, {\rm div}^2 v\in L^{p'}(\Omega, \R^2), \\
& \qquad \qquad  z\in c_0(\Z^4),\, \|\div v\|_\infty \leq 1,\, \|\alpha_1 v\|_\infty \leq 1, \,\alpha_0 r_\Omega^0 {\rm div}^2 v  = \mathcal{SH}^\# z ,\, \|z\|_\infty \leq 1 \bigg\}.
\end{align*}

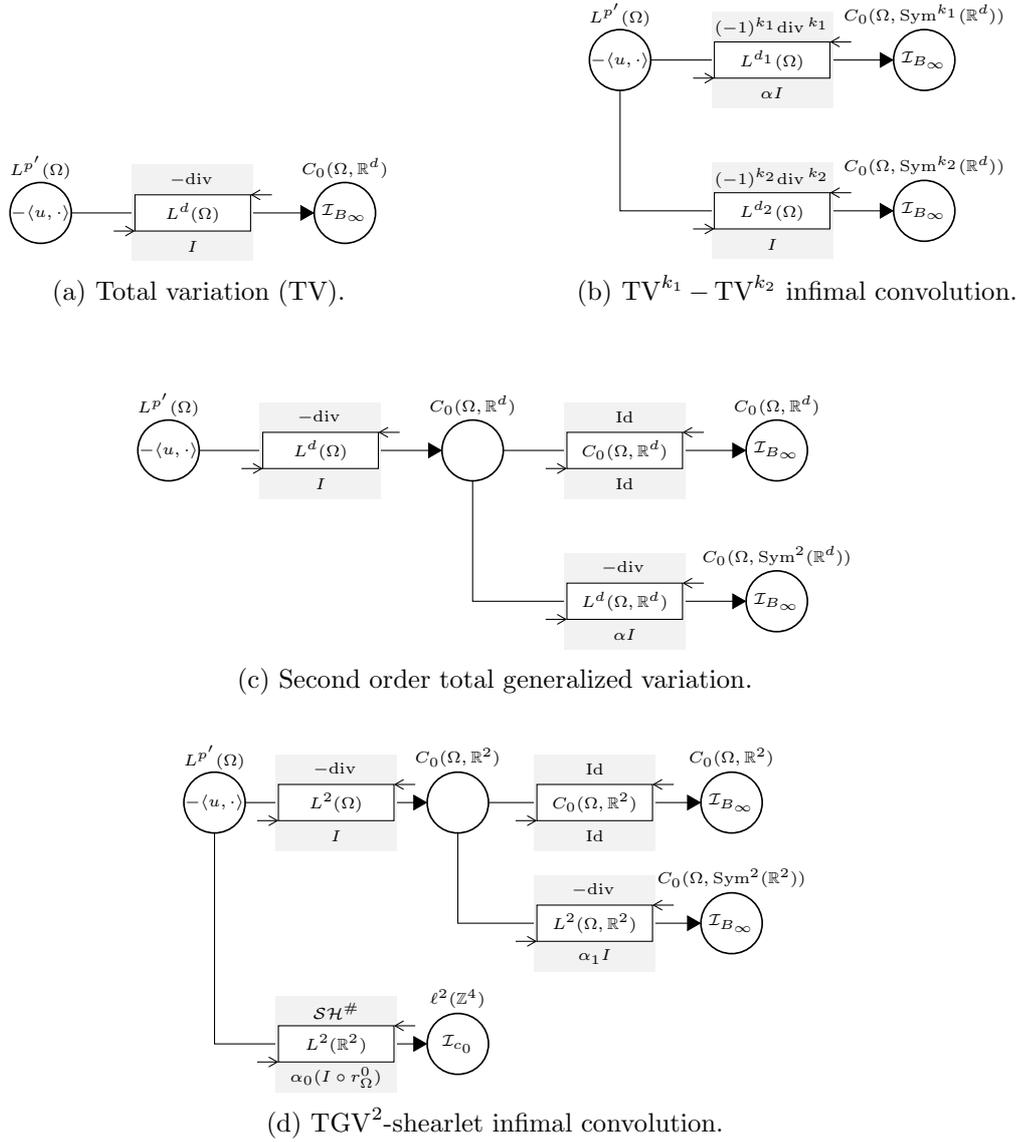
\begin{figure}
\begin{subfigure}[b]{0.5\textwidth}

\centering
\begin{tikzpicture}[predualrgraph]

\draw (0,0) pic (p1) {node={$L^{p'}(\Omega)$,$-\langle u\comma \cdot \rangle$}};
\draw (1,0) pic (p2) {node={$C_0(\Omega\comma \R^d)$,$\mI_{B_\infty}$}};

\draw[predge] (p1) -- pic{predge={$I$,$L^{d}(\Omega)$,$-\div$}} (p2);

\end{tikzpicture}
\subcaption{Total variation (TV).\label{fig:regtvdual}}
\end{subfigure}
\begin{subfigure}[b]{0.5\textwidth}
\centering
\begin{tikzpicture}[predualrgraph]

\draw (0,0) pic (p1) {node={$L^{p'}(\Omega)$,$-\langle u\comma \cdot \rangle$}};
\draw (1,0) pic (p2) {node={$C_0(\Omega\comma \Sym^{k_1}(\R^d))$,$\mI_{B_\infty}$}};

\draw (1,-1) pic (p3) {node={$C_0(\Omega\comma \Sym^{k_2}(\R^d))$,$\mI_{B_\infty}$}};

\draw[predge] (p1) -- pic{predge={$\alpha I$,$L^{d_1}(\Omega)$,$(-1)^{k_1}\div^{k_1}$}} (p2);
\draw[predge] (p1) -- (0,-1) --  pic{predge={$I$,$L^{d_2}(\Omega)$,$(-1)^{k_2}\div^{k_2}$}} (p3);

\end{tikzpicture}
\subcaption{$\TV^{k_1}-\TV^{k_2}$ infimal convolution.\label{fig:reginfconvdual}}
\end{subfigure}
\vspace{\baselineskip}

\begin{subfigure}[c]{\textwidth}
\centering
\begin{tikzpicture}[rgraph]

\draw (0,0) pic (p1) {node={$L^{p'}(\Omega)$,$-\langle u\comma \cdot \rangle$}};
\draw (1,0) pic (p2) {node={$C_0(\Omega\comma \R^d)$,}};

\draw (2,0) pic (p3) {node={$C_0(\Omega\comma \R^d)$,$\mI_{B_\infty}$}};
\draw (2,-1) pic (p4) {node={$C_0(\Omega\comma \Sym^2(\R^d))$,$\mI_{B_\infty}$}};

\draw[predge] (p1) -- pic{predge={$I$,$L^{d}(\Omega)$,$-\div$}} (p2);
\draw[predge] (p2) -- pic{predge={$\Id$,$C_0(\Omega\comma\R^d)$,$\Id$}} (p3);
\draw[predge] (p2) -- (1,-1) -- pic{predge={$\alpha I$,$L^{d}(\Omega\comma \R^d)$,$-\div$}} (p4);

\end{tikzpicture}
\subcaption{Second order total generalized variation.\label{fig:regtgv2dual}}
\end{subfigure}

\vspace{\baselineskip}

\begin{subfigure}[c]{\textwidth}
\centering

\begin{tikzpicture}[predualrgraph]

\draw (0,0) pic (p1) {node={$L^{p'}(\Omega)$,$-\langle u\comma \cdot \rangle$}};
\draw (0.8,0) pic (p2) {node={$C_0(\Omega\comma\R^2)$,}};

\draw (1.7,0) pic (p3) {node={$C_0(\Omega\comma\R^2)$,$\mI_{B_\infty}$}};
\draw (1.7,-0.8) pic (p4) {node={$C_0(\Omega\comma \Sym^2(\R^2))$,$\mI_{B_\infty}$}};

\draw[predge] (p1) -- pic{predge={$I$,$L^{2}(\Omega)$,$-\div$}} (p2);
\draw[predge] (p2) -- pic{predge={$\Id$,$C_0(\Omega\comma\R^2)$,$\Id$}} (p3);
\draw[predge] (p2) -- (0.8,-0.8) -- pic{predge={$\alpha_1 I$,$L^{2}(\Omega\comma\R^2)$,$-\div$}} (p4);

\draw (0.8,-1.6) pic (p3) {node={$\ell^{2}(\Z^4)$,$\mathcal{I}_{c_0}$}};

\draw[predge] (p1) --(0,-1.6) --  pic{predge={$\alpha_0 (I\circ r_\Omega^0)$,$L^{2}(\R^2)$,$\mathcal{S}\mathcal{H}^\#$}} (p3);
\end{tikzpicture}

\subcaption{$\TGV^2$-shearlet infimal convolution.\label{fig:shearletdual}}

\end{subfigure}

\caption{Examples of predual regularization graphs.}\label{fig:predualreggraphs}
\end{figure}

\section{Regularization of linear inverse problems}\label{sec:regularizationinverse}

\subsection{Setting and well-posedness}
We now consider the application of regularization graphs to the regularization of linear inverse problems. That is, with $K:X_{\hat{n}} \rightarrow Y$ a bounded linear operator (the forward model), $S_f:Y \rightarrow [0,\infty)$ a discrepancy functional associated with the data $f$ and $\beta>0$ a regularization parameter, we consider the minimization problem
 \begin{equation} \label{eq:linear_inverse_general}
 \min _{u \in X_{\hat{n}} } S_f(Ku) + \beta R_\alpha(u),
 \end{equation}
where $R_\alpha = R(\Gc_\alpha):X_{\hat{n}} \rightarrow [0,\infty]$ with $\Gc_\alpha$ a regularization graph with root node $\hat{n}$.
\begin{rem}[Forward operator with general domain $X$]
Note that considering only forward operators being defined on $X_{\hat{n}}$, where bounded sequences need to admit weak* convergent subsequences according to \ref{ass:weak_star_compactness}, is not a restriction compared to considering general operators $\tilde{K}:X \rightarrow Y$ with $X$ a Banach space such that $X_{\hat n} \hookrightarrow X$ and $R_\alpha$ being extended by $\infty$ to $X$ as in Proposition \ref{prop:reg_graph_extension_to_X}, since one can always recover this setting by choosing $K = \tilde{K} \circ \emb_{X_{\hat{n}},X}$, with $\emb_{X_{\hat{n}},X}$ the continuous embedding of $X_{\hat n} $ to $ X$.
\end{rem}

In order to study convergence in the data space for general discrepancies $S_f$, we introduce the following notion of convergence: We say the functionals $(S_{f^k})_k$ converge to $S_f$ if 
\begin{equation}
\left\{
\begin{aligned}
S_f(v) \leq \liminf_{k} S_{f^k}(v^k) \quad & \text{whenever } v^k \rightharpoonup v \text{ in } Y,\\
S_f(v) \geq \limsup_{k} S_{f^k}(v)  \quad & \text{for each } v \in  Y.\\
\end{aligned}
\right.
\end{equation}
Further, we say that $(S_{f^k})_k$ is \emph{equi-coercive} if there exists a coercive function $S_0:Y \rightarrow [0,\infty)$ such that $S_{f^k} \geq S_0$ in $Y$ for all $k$.
Note that this always holds true if $S_{f^k}(v):= \frac{1}{q}\|v-f^k\|_Y^q$ and $f^k \rightarrow f$ in $Y$, but the more general assumptions allow us to capture, for instance, also the situation when $S_f$ is the Kullback--Leibler divergence \cite[Example 2.16]{Holler19_ip_review_paper}.

Now, under weak assumptions, the previously established properties of $R_\alpha$ allow to obtain a standard well-posedness result for \eqref{eq:linear_inverse_general}. 

\begin{teo}  \label{thm:general_reg_existence_linear}
Let $R_\alpha = R(\Gc_\alpha) $ with $\Gc_\alpha$ being a regularization graph with weights $\alpha$ and root node $\hat{n}$ such that $X_{\hat{n}}$ is reflexive, $\beta >0$, and let $Y$ be a Banach space, $K: X_{\hat{n}}\to Y$ be linear and continuous and
  $S_f: Y \to {[{0,\infty}]}$ be a proper, convex, weakly lower semi-continuous and coercive discrepancy functional. 
  Then, the Tikhonov minimization problem \eqref{eq:linear_inverse_general}
  is well-posed, i.e., there exists a solution and the solution mapping is stable in sense
  that, if $S_{f^k}$ converges to $S_f$ and $(S_{f^k})_k$ is
  equi-coercive, then for each sequence of minimizers $(u^k)_k$
  of~\eqref{eq:linear_inverse_general} with discrepancy $S_{f^k}$,
  \begin{itemize}
  \item[i)]
    either $S_{f^k}(Ku^k) + \beta R_\alpha (u^k) \to \infty$ as
    $k \to \infty$ and~\eqref{eq:linear_inverse_general} with discrepancy
    $S_f$ does not admit a finite solution,
  \item[ii)] or
    $S_{f^k}(Ku^k) + \beta R_\alpha (u^k) \to \min_{u \in
      X_{\hat n}} S_f(Ku) + \beta R_\alpha (u)$
    as $k \to \infty$ and there exists, possibly up to shifts by functions in $\ker(K) \cap L$, with $L$ the invariant subspace of $R_\alpha$, a
    weak accumulation point $u \in X_{\hat n}$ of $(u^k)_k$ that
    minimizes~\eqref{eq:linear_inverse_general} with discrepancy $S_f$.
  \end{itemize}  
  Further, in case \eqref{eq:linear_inverse_general} with discrepancy $S_f$ admits a 
  finite solution, for each subsequence $(u^{k_i})_i$ weakly converging to some $u \in X_{\hat{n}}$,
  it holds that $R_\alpha (u^{k_i}) \to R_\alpha (u)$ as $i \to \infty$.
  Also, if $S_f$ is strictly convex and $K$ is injective, finite solutions $u$ of \eqref{eq:linear_inverse_general}
  are unique and $u^k \rightharpoonup u$ in $X_{\hat n}$.

\end{teo}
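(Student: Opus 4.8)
The plan is to run the direct method of the calculus of variations, the only non-routine ingredient being the treatment of the finite-dimensional invariant subspace $L$ of $R_\alpha$ furnished by Theorem \ref{thm:graphessentiallycoercive}. I first record the structural facts I will use throughout: $R_\alpha$ is convex with $R_\alpha(0)=0$ (Proposition \ref{prop:convexineq}); since $X_{\hat n}$ is reflexive, the weak* lower semi-continuity of Theorem \ref{thm:graphessentiallycoercive} coincides with weak lower semi-continuity, and the estimate $\|u-P_Lu\|_{X_{\hat n}}\le C R_\alpha(u)+D$ holds. As $K$ is bounded and linear it is weak-to-weak continuous, so $u^k\rightharpoonup u$ forces $Ku^k\rightharpoonup Ku$, and hence $S_f\circ K$ is weakly lower semi-continuous. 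For existence, take a minimizing sequence $(u^k)_k$ for $J:=S_f(K\,\cdot)+\beta R_\alpha$ (the case $\inf J=\infty$ being trivial). Fix a complement $L'$ of $\ker(K)\cap L$ in $L$; since shifts by $\ker(K)\cap L$ leave both $Ku^k$ and, by invariance, $R_\alpha(u^k)$ unchanged, I may assume $P_Lu^k\in L'$. Boundedness of $R_\alpha(u^k)$ and the coercivity estimate bound $u^k-P_Lu^k$, while coercivity of $S_f$ bounds $Ku^k$ and hence $KP_Lu^k=Ku^k-K(u^k-P_Lu^k)$; as $K$ is injective on the finite-dimensional $L'$ it is bounded below there, so $(P_Lu^k)_k$ and therefore $(u^k)_k$ are bounded. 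Reflexivity then yields a weakly convergent subsequence whose limit minimizes $J$ by weak lower semi-continuity.

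For stability I abbreviate $J^k:=S_{f^k}(K\,\cdot)+\beta R_\alpha$, $m^k:=\min J^k=J^k(u^k)$ and $m:=\inf J$. The limsup property of $S_{f^k}\to S_f$ gives $\limsup_k J^k(u)\le J(u)$ for every $u$, whence $\limsup_k m^k\le m$. For the reverse bound, suppose $\liminf_k m^k<\infty$ and pass to a subsequence along which $m^k$ is bounded; here equi-coercivity ($S_{f^k}\ge S_0$ with $S_0$ coercive) replaces the coercivity of $S_f$ in the boundedness argument above, so after the $\ker(K)\cap L$-shift a further subsequence satisfies $u^{k_i}\rightharpoonup u$, and the liminf property of $S_{f^k}\to S_f$ together with weak lower semi-continuity of $R_\alpha$ gives $J(u)\le\liminf_i m^{k_i}$. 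Combined with $J(u)\ge m$ this shows $\liminf_k m^k\ge m$, so $m^k\to m$. The dichotomy is then governed by finiteness of $m$: if $m=\infty$ the bounded-subsequence argument is vacuous, no finite solution exists and $m^k\to\infty$; if $m<\infty$ the limit $u$ above satisfies $J(u)=m$, i.e.\ it is a finite minimizer obtained as a weak accumulation point up to shifts in $\ker(K)\cap L$.

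Assume now $m<\infty$, so $m^k\to m$, and let $u^{k_i}\rightharpoonup u$ be any weakly convergent subsequence of minimizers; as above $u$ minimizes $J$. Writing $S_{f^{k_i}}(Ku^{k_i})=m^{k_i}-\beta R_\alpha(u^{k_i})$ and comparing with the splitting $m=S_f(Ku)+\beta R_\alpha(u)$, any subsequential limit of $R_\alpha(u^{k_i})$ strictly above $R_\alpha(u)$ would force $\liminf_i S_{f^{k_i}}(Ku^{k_i})<S_f(Ku)$, contradicting the liminf property; together with weak lower semi-continuity of $R_\alpha$ this yields $R_\alpha(u^{k_i})\to R_\alpha(u)$. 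Finally, if $S_f$ is strictly convex and $K$ is injective, then $\ker(K)\cap L=\{0\}$, the map $S_f\circ K$ is strictly convex and hence so is $J$, giving a unique finite minimizer $u$; since $m^k\to m<\infty$ the whole sequence $(u^k)_k$ is bounded and each of its weak accumulation points minimizes $J$ and thus equals $u$, so a standard subsequence argument gives $u^k\rightharpoonup u$.

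The \emph{main obstacle} is precisely the loss of coercivity of $R_\alpha$ along $L$: in contrast to the textbook Tikhonov setting one cannot bound minimizing (or minimizer) sequences from the regularizer alone. The crux is the algebraic splitting $L=(\ker(K)\cap L)\oplus L'$, which lets me discard the directions $\ker(K)\cap L$ that are invisible to both $R_\alpha$ and $K$ by shifting, and control the remaining finite-dimensional part $L'$ through the forward operator $K$, exploiting that $K$ is bounded below on $L'$. Everything else is the classical interplay of coercivity, weak compactness, and weak lower semi-continuity, here adapted to the generalized convergence $S_{f^k}\to S_f$ via its liminf and limsup properties.
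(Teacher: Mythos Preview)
Your proof is correct and follows essentially the same approach as the paper. The paper's proof packages your boundedness argument---splitting $L=(\ker(K)\cap L)\oplus L'$, shifting into $L'$, and using injectivity of $K$ on the finite-dimensional complement $L'$---into an invocation of Lemma~\ref{lem:general_existence} with $F=R_\alpha$, $\Theta=\Id$, and defers the stability argument to a reference; you write both out explicitly, but the ideas are identical.
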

\begin{proof}
Existence follows by the application of the direct method of calculus of variations in $X_{\hat n}$. More precisely, given a minimizing sequence $(u^k)_k$ for \eqref{eq:linear_inverse_general} we can apply Lemma \ref{lem:general_existence} with $W =X= X_{\hat{n}}$,
$F = R_\alpha$, $\Theta = \Id$ and $L$ being the finite dimensional invariant space of $R_\alpha$ provided by Theorem \ref{thm:graphessentiallycoercive}, to obtain the existence of another minimizing sequence $(\tilde u^k)_k$  for \eqref{eq:linear_inverse_general} that is bounded in $X_{\hat n}$. Note that the assumptions of Lemma \ref{lem:general_existence} are fulfilled since the weak lower semi-continuity of $R_\alpha$ (which is equivalent to weak* lower semi-continuity of $R_\alpha$ by reflexivity of $X_{\hat n}$) and Assumption $i)$ of Lemma \ref{lem:general_existence} hold as a consequence of Theorem \ref{thm:graphessentiallycoercive}, and the boundedness of $(K u^k)_k$ follows from the coercivity of $S_f$.
Therefore, thanks to the weak lower semi-continuity of $R_\alpha$ and the boundedness of $K$ we can apply the direct method to the sequence $(\tilde u^k)_k$ and conclude existence of minimizers for \eqref{eq:linear_inverse_general}.

The claimed stability follows with standard arguments. 
For instance, it can be proven adapting straightforwardly \cite[Theorem 2.14]{Holler19_ip_review_paper}.
\end{proof}
\begin{rem} \label{rem:well_posed_non_reflexive}
Note that the results of Theorem \ref{thm:general_reg_existence_linear} 
can also be modified to hold without assuming reflexivity of $X_{\hat{n}}$ but assuming, for instance, that $K$ is weak*-to-weak continuous. Indeed, in this setting, Lemma \ref{lem:general_existence} applies the same way and existence follows from the coercivity statement of Lemma \ref{lem:general_existence} using weak*-to-weak continuity of $K$ and weak lower semi-continuity of $S_f$. Likewise, also the claimed stability can be shown by straightfoward adaptions.
\end{rem}

\subsection{Convergence and stability for varying parameters}
In this section we study the stability of solutions of \eqref{eq:linear_inverse_general} for varying parameters $\alpha$ and for vanishing noise. To this aim, we first define a variant of regularization graphs for vanishing weights.

\begin{dfnz} \label{def:extended_graph_weight_zero}
For $\Gc_\alpha$ a regularization graph with underlying graph $G = (V,E)$ and $\alpha \in [0,\infty)^{E}$ weights, define $\tilde{\alpha} = (\tilde{\alpha}_e)_{e \in E}$ as $\tilde{\alpha}_e = \alpha_e$ for $\alpha_e >0$ and $\tilde{\alpha}_e = 1$ else. Further, for $n \in V$, set $\tilde \Psi_n = \mI_{\{0\}}$ in case that $n \neq \hat n$ and $\alpha_{(n^-,n)} = 0$,
 and $\tilde{\Psi}_n = \Psi_n $ else. 
Then, we define $\hat{\Gc}_\alpha = \tilde{\Gc}_{\tilde{\alpha}}$ with $\tilde{\Gc} = (G, (\tilde{\Psi}_n)_{n\in V}, (\Theta_e)_{e\in E}, (\Phi_e)_{e\in E})$, that is, $\hat{\Gc}_\alpha$ is the regularization graph $\Gc_\alpha$ with zero weights being replaced by $1$ and node functionals associated with tails of zero-weight edges being replaced by $\mI_{\{0\}}$. Finally, we set $\hat{R}_\alpha = R(\hat{\Gc}_\alpha)$, being the regularization graph functional associated to $\hat{\Gc}_\alpha$.
\end{dfnz}
This definition is required to deal with lower semi-continuity with respect to weights $(\alpha_e)_e$ converging to zero. An example of $\hat{R}_\alpha$ in case $R_\alpha(u)=R(\Gc_\alpha)(u) = \min_{ w \in \BD(\Omega,\R^d)} \| \nabla u - \alpha_0  w \|_\M + \|\symgrad w\|_\M$ and $\alpha_0 = 0$ is given as
\[ \hat{R}_\alpha(u) = \min_{ w \in \BD(\Omega,\R^d)} \| \nabla u - w \|_\M \quad \text{s.t. } \symgrad w = 0.
\]
It is easy to see that for any regularization graph $\Gc_\alpha$ and any choice of weights $\alpha$, $\hat{\Gc}_\alpha$ is again a regularization graph such that all previous results apply. Moreover, the following lemma holds.
\begin{lemma} \label{lem:rhat_r_estimate}
Let $\Gc_\alpha $ be a regularization graph with root node $\hat{n}$ and weights $\alpha \in [0,\infty)^{E}$, $R_\alpha = R(\Gc_\alpha)$ and $\hat R_\alpha = R(\hat{\Gc}_\alpha)$. Then for every $u \in X_{\hat n}$ we have that $ \hat{R}_{\alpha}(u)\leq R_\alpha(u)$. 
\end{lemma}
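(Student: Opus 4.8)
The plan is to argue by induction on the height $h$ of the graph, using the recursive representation of Lemma \ref{lem:primal_graph_recursion}. For $h=0$ the graph is trivial, so $R_\alpha(u)=\Psi_{\hat n}(u)$, and since the root functional is never altered by the construction of Definition \ref{def:extended_graph_weight_zero}, also $\hat R_\alpha(u)=\Psi_{\hat n}(u)$; the claim then holds with equality. For the inductive step I would assume $\hat R_{\alpha'}\le R_{\alpha'}$ for every regularization graph of height at most $h-1$ and let $\Gc_\alpha$ have height $h\ge 1$.

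For the step I would split the edges $\hat E$ emanating from the root into $\hat E_+=\{\hat e: \alpha_{\hat e}>0\}$ and $\hat E_0=\{\hat e: \alpha_{\hat e}=0\}$. Given any family $(w_{\hat e})_{\hat e\in\hat E}$ with $w_{\hat e}\in\domain(\Theta_{\hat e})$ admissible in the recursive formula for $R_\alpha$, I would feed into the recursive formula for $\hat R_\alpha$ the family defined by $\hat w_{\hat e}=w_{\hat e}$ for $\hat e\in\hat E_+$ and $\hat w_{\hat e}=0$ for $\hat e\in\hat E_0$. Since the weights in $\hat\Gc_\alpha$ are $\tilde\alpha_{\hat e}=\alpha_{\hat e}$ on $\hat E_+$ and $\tilde\alpha_{\hat e}=1$ on $\hat E_0$, and since $\Phi_{\hat e}\hat w_{\hat e}=\Phi_{\hat e}0=0$ for the zero-weight edges, the argument of the root functional $\tilde\Psi_{\hat n}=\Psi_{\hat n}$ is exactly $u-\sum_{\hat e\in\hat E_+}\alpha_{\hat e}\Phi_{\hat e}w_{\hat e}$, which coincides with the argument appearing in $R_\alpha$. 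The $\hat E_0$-subtree terms each contribute $\bar R^{\hat e}(\Theta_{\hat e}0)=\bar R^{\hat e}(0)=0$ by Proposition \ref{prop:convexineq}, using that each subtree of $\hat\Gc_\alpha$ is again a regularization graph so that its functional vanishes at $0$; the $\hat E_+$-subtree terms are of the form $\hat R^{\hat e}_{\alpha^{\hat e}}(\Theta_{\hat e}w_{\hat e})\le R^{\hat e}_{\alpha^{\hat e}}(\Theta_{\hat e}w_{\hat e})$ by the induction hypothesis. Adding back the nonnegative $\hat E_0$-terms $R^{\hat e}_{\alpha^{\hat e}}(\Theta_{\hat e}w_{\hat e})$ on the $R_\alpha$ side then shows that the value of the $\hat R_\alpha$-objective at $(\hat w_{\hat e})$ is bounded by the value of the $R_\alpha$-objective at $(w_{\hat e})$. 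Taking the infimum over all admissible $(w_{\hat e})$ yields $\hat R_\alpha(u)\le R_\alpha(u)$.

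The step I would treat most carefully — and the main potential obstacle — is the bookkeeping needed to apply the induction hypothesis, namely identifying, for $\hat e\in\hat E_+$, the subtree functional $\bar R^{\hat e}$ of $\hat\Gc_\alpha$ rooted at $\hat n^{\hat e}$ with the functional $\hat R^{\hat e}_{\alpha^{\hat e}}$ obtained by applying Definition \ref{def:extended_graph_weight_zero} to the subtree $\Gc^{\hat e}_{\alpha^{\hat e}}$ in isolation. This requires verifying that the two node-functional-replacement rules agree on the subtree: the root $\hat n^{\hat e}$ is relabelled in neither construction because its incoming edge $\hat e$ has positive weight, while each interior node $m$ of the subtree has its unique incoming edge $(m^-,m)$ inside $E^{\hat e}$, so the triggering condition $\alpha_{(m^-,m)}=0$ is evaluated identically in the full graph and in the subtree; the weight renormalisation $\tilde\alpha$ likewise agrees on $E^{\hat e}$. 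For $\hat e\in\hat E_0$ the functional $\bar R^{\hat e}$ does differ from $\hat R^{\hat e}_{\alpha^{\hat e}}$, as its root functional is forced to $\mI_{\{0\}}$, but there only the weaker fact $\bar R^{\hat e}(0)=0$ is needed, which holds unconditionally.
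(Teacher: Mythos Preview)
Your proposal is correct and follows essentially the same approach as the paper: induction on the height, recursive representation via Lemma~\ref{lem:primal_graph_recursion}, setting the edge variables at zero-weight root edges to zero in the $\hat R_\alpha$ formula, applying the induction hypothesis on the positive-weight subtrees, and re-adding the nonnegative zero-weight subtree terms on the $R_\alpha$ side. Your explicit verification that the subtree functional of $\hat\Gc_\alpha$ at a positive-weight root edge coincides with $\hat R^{\hat e}_{\alpha^{\hat e}}$ is in fact more detailed than the paper's treatment, which simply asserts this identification.
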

\begin{proof}
Arguing by induction we suppose that the claimed assertion holds for any regularization graph of height less than $h$ and we assume that the height of $\Gc_\alpha$ is $h$. For $h=0$, the result holds since $\hat R_{\alpha} = R_{\alpha}$, so we assume $h\geq 1$.
Using the recursive representation of $R_{\alpha}$ and the notation from Lemma \ref{lem:primal_graph_recursion} it holds that
\begin{equation}  \label{eq:recursive_rep_tmp_proof}
R_{\alpha}(u) =  \inf \Bigg\{\Psi_{\hat{n}} \left(u-  \sum_{\hat{e} \in \hat{E}: \, \alpha_{\hat{e}}>0} \alpha_{\hat{e}} \Phi_{\hat{e}} w_{\hat{e}} \right) + \sum_{\hat{e}\in \hat{E}}  R^{\hat{e}}_{\alpha^{\hat{e}}}(\Theta_{\hat{e}}w_{\hat{e}})  \Biggst w_{{\hat{e}}} \in \domain(\Theta_{{\hat{e}}}) \text{ for all } {\hat{e}} \in \hat{E} \Bigg\}
 \end{equation} 
where, for $\hat{e}\in \hat{E}$, $R^{\hat{e}}_{\alpha^{\hat{e}}} = R(\Gc_{\alpha^{\hat{e}}}^{\hat{e}})$ and $\Gc^{\hat{e}}_{\alpha^{\hat{e}}} = (\Gc^{\hat{e}},\alpha^{\hat{e}})$, with $\Gc^{\hat{e}} = (G^{\hat{e}}, (\Psi_n)_{n\in V^{\hat{e}}}, (\Theta_e)_{e\in E^{\hat{e}}},$ $(\Phi_e)_{e\in E^{\hat{e}}})$ and $\alpha^{\hat{e}} = (\alpha_e)_{e\in E^{\hat{e}}}$, is a regularization graph of height at most $h-1$ with associated graph $G^{\hat{e}} = (V^{\hat{e}},E^{\hat{e}})$ that is a subtree of $G=(V,E)$ with $\hat{n}^{\hat{e}}$ as root node. 
Similarly, using Lemma \ref{lem:primal_graph_recursion}, we write $\hat{R}_{\alpha}(u)$ as
\begin{align}\label{eq:ineqp}
\hat{R}_{\alpha}(u) &  =   \inf \Bigg\{ \Psi_{\hat{n}} \bigg(u-  \sum_{ {\hat{e}} \in \hat{E}: \alpha_{{\hat{e}}} >0}  \alpha_{\hat{e}} \Phi_{\hat{e}}  w_{\hat{e}} -  \sum_{ {\hat{e}} \in \hat{E} : \alpha_{{\hat{e}}} =0}  \Phi_{\hat{e}}  w_{\hat{e}} \bigg) + \sum_{\hat{e} \in \hat{E}: \alpha_{\hat{e}}=0}  \bar{R}^{\hat{e}}_{\tilde \alpha^{\hat{e}}}(\Theta_{\hat{e}}  w_{\hat{e}}) \nonumber\\
& \qquad \qquad + \sum_{\hat{e} \in \hat{E}: \alpha_{\hat{e}}>0}  \bar{R}^{\hat{e}}_{\tilde \alpha^{\hat{e}}}(\Theta_{\hat{e}}  w_{\hat{e}}) \Biggst w_{{\hat{e}}} \in \domain(\Theta_{{\hat{e}}}) \text{ for all } {\hat{e}} \in \hat{E} \Bigg\},
\end{align}
where, for $\hat{e}\in \hat{E}$, $\bar{R}^{\hat{e}}_{\tilde \alpha^{\hat{e}}} = R(\bar \Gc_{\tilde \alpha^{\hat{e}}}^{\hat{e}})$ and $\bar \Gc_{\tilde \alpha^{\hat{e}}}^{\hat{e}} = (\bar \Gc^{\hat{e}},{\tilde \alpha^{\hat{e}}})$, with $\bar \Gc^{\hat{e}} =  (G^{\hat{e}}, (\tilde \Psi_n)_{n\in V^{\hat{e}}}, (\Theta_e)_{e\in E^{\hat{e}}},$ $ (\Phi_e)_{e\in E^{\hat{e}}})$ and $(\tilde \alpha_e)_{e\in E^{\hat{e}}}$ according to Definition \ref{def:extended_graph_weight_zero}, is a regularization graph of height at most $h-1$ with associated graphs $G^{\hat{e}} = (V^{\hat{e}},E^{\hat{e}})$.
Note that  $\tilde \Psi_{\hat n^{\hat e}} =  \Psi_{\hat n^{\hat e}}$  for every $\hat e \in \hat E$ such that  $\alpha_{\hat{e}}>0$. Therefore, by the way the modified regularization graph $\hat \Gc_{\alpha^{\hat{e}}}^{\hat{e}}$ is obtained from $\Gc_{\alpha^{\hat{e}}}^{\hat{e}}$ according to Definition \ref{def:extended_graph_weight_zero}, it follows that, for $\hat e \in \hat E$ with $\alpha_{\hat{e}}>0$, $\bar{R}^{\hat{e}}_{\tilde \alpha^{\hat{e}}} = \hat{R}^{\hat{e}}_{\alpha^{\hat{e}}}$ with $\hat{R}^{\hat{e}}_{\alpha^{\hat{e}}} = R(\hat \Gc_{\alpha^{\hat{e}}}^{\hat{e}})$.

Thus, choosing $w_{\hat e} = 0$ for every $\hat e \in \hat E$ such that  $\alpha_{\hat{e}}=0$ in  \eqref{eq:ineqp} and using the induction hypothesis as well as \eqref{eq:recursive_rep_tmp_proof}, we estimate for every $u \in X_{\hat n}$ as follows
\begin{multline*}
\hat{R}_{\alpha}(u)   \leq \inf \Bigg\{ \Psi_{\hat{n}} \bigg(u-  \sum_{ {\hat{e}} \in \hat{E}: \alpha_{{\hat{e}}} >0}  \alpha_{\hat{e}} \Phi_{\hat{e}}  w_{\hat{e}}\bigg) + \sum_{\hat{e} \in \hat{E}: \alpha_{\hat{e}}>0}  \hat{R}^{\hat{e}}_{\alpha^{\hat{e}}}(\Theta_{\hat{e}}  w_{\hat{e}}) \Biggst w_{{\hat{e}}} \in \domain(\Theta_{{\hat{e}}}),\\
 \qquad \qquad  \forall {\hat{e}} \in \hat{E} \text{ such that } \alpha_{\hat{e}}>0\Bigg\}
 \leq R_\alpha(u).
\end{multline*}
\end{proof}

We now prove a weak* lower semi-continuity result for regularization graph functionals with respect to the parameters. 
\begin{teo}\label{thm:l.s.c}
Let $\Gc_\alpha $ be a regularization graph with root node $\hat{n}$ and weights $\alpha \in [0,\infty)^{E}$, $R_\alpha = R(\Gc_\alpha)$, and $(\alpha^k)_k $ be a sequence of weights in $(0,\infty)^{E}$ such that $(\alpha^k)_k \rightarrow \alpha$.

Then, for every sequence $(u^k)_k$ in  $X_{\hat{n}}$ such that $u^k \weakstar u\in X_{\hat{n}}$
it holds that
\begin{equation}\label{eq:lscweights_extended_r}
\hat{R}_{\alpha}(u) \leq \liminf_{k\rightarrow \infty} R_{\alpha^k}(u^k) .
\end{equation}

Moreover, for $u \in X_{\hat{n}}$ and
\begin{align}\label{eq:gamma_definition}
\gamma_k & :=  \min \Big \{ \prod_{e \in F} \alpha^k_e/\alpha_e \Big |  F \subset E \text{ is either empty or} \\
&  \quad \quad \quad \quad \quad \quad \text{a chain with root }\hat{n} \in V \text{ and } \alpha_e >0 \ \forall e \in F \Big \},
\end{align}
using again the conventions that for $F=\emptyset$, $ \prod_{e \in \emptyset} \frac{\alpha_e^2}{\alpha_e^1} = 1$, we have that $\gamma_k \leq 1$, $\gamma_k \rightarrow 1$, 
\begin{equation}\label{eq:convergenceweights}
R_{\alpha^k}(\gamma_k u) \leq  \hat{R}_{\alpha}(u) \ \ \text{for all } k\in \N \quad \text{and}\quad 
\lim_{k\rightarrow \infty} R_{\alpha^k}(\gamma_k u) = \hat{R}_{\alpha}(u).
\end{equation}

\end{teo}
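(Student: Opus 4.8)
Throughout I would proceed by induction on the height $h$ of the graph, using the recursive representation of Lemma \ref{lem:primal_graph_recursion}, and treat the two displayed estimates separately before combining them. The elementary properties of $\gamma_k$ come first: the empty set is admissible in the minimum defining $\gamma_k$ and contributes the value $1$, so $\gamma_k \le 1$; and since $\alpha^k_e/\alpha_e \to 1$ for every $e$ with $\alpha_e>0$ while there are only finitely many chains $F\subset E$, each finite product $\prod_{e\in F}\alpha^k_e/\alpha_e$ tends to $1$, hence so does their minimum, giving $\gamma_k\to 1$. Finally, the limit statement in \eqref{eq:convergenceweights} will follow by sandwiching: applying \eqref{eq:lscweights_extended_r} to the sequence $\gamma_k u \to u$ yields $\hat R_\alpha(u)\le \liminf_k R_{\alpha^k}(\gamma_k u)$, and together with the upper bound $R_{\alpha^k}(\gamma_k u)\le \hat R_\alpha(u)$ this forces $\lim_k R_{\alpha^k}(\gamma_k u)=\hat R_\alpha(u)$.

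For the lower semi-continuity bound \eqref{eq:lscweights_extended_r} the base case $h=0$ is exactly weak* lower semi-continuity of $\Psi_{\hat n}$ (assumption \ref{ass:lsc_functional}), since then $R_{\alpha^k}=\Psi_{\hat n}=\hat R_\alpha$. For $h\ge 1$ I write $R_{\alpha^k}(u^k)$ through its recursion, take attained minimizers $(w^k_{\hat e})_{\hat e\in\hat E}$, and assume w.l.o.g.\ that $\liminf_k R_{\alpha^k}(u^k)<\infty$ is realized along the whole sequence. Coercivity of $\Psi_{\hat n}$ (Remark \ref{rem:equivcoercivity}) and boundedness of each $R^{\hat e}_{(\alpha^k)^{\hat e}}(\Theta_{\hat e}w^k_{\hat e})$ let me invoke Lemma \ref{lem:general_existence} — noting that the invariant subspaces $L^{\hat e}$ of $R^{\hat e}_{(\alpha^k)^{\hat e}}$ are independent of $k$ (the $\alpha^k$ have full support) and that the coercivity constants are uniform in $k$ by Proposition \ref{prop:coercivity_dependence_on_weights} — to replace $(w^k_{\hat e})_k$ by bounded sequences (up to kernel corrections) with $(\Theta_{\hat e}w^k_{\hat e})_k$ bounded, and then pass to weak* limits using weak* closedness of $\Theta_{\hat e}$ and weak* continuity of $\Phi_{\hat e}$. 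For each $\hat e$ with $\alpha_{\hat e}>0$ the induction hypothesis bounds $\liminf_k R^{\hat e}_{(\alpha^k)^{\hat e}}(\Theta_{\hat e}w^k_{\hat e})$ below by $\hat R^{\hat e}_{\alpha^{\hat e}}$ of the limit; for each $\hat e$ with $\alpha_{\hat e}=0$ the factor $\alpha^k_{\hat e}\to 0$ makes the backward contribution $\alpha^k_{\hat e}\Phi_{\hat e}w^k_{\hat e}$ vanish in the limit, so choosing the trivial edge variable $0$ in $\hat R_\alpha$ (which contributes $\bar R^{\hat e}(0)=0$) and using that the subtree penalties are nonnegative shows that the limiting configuration is admissible for $\hat R_\alpha(u)$ with value at most $\liminf_k R_{\alpha^k}(u^k)$; weak* lower semi-continuity of $\Psi_{\hat n}$ then closes the induction.

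For the upper bound $R_{\alpha^k}(\gamma_k u)\le \hat R_\alpha(u)$ I argue constructively, again by induction on $h$. Given near-optimal edge variables $(w_e)$ for $\hat R_\alpha(u)$, I rescale them edge-by-edge, $w'_e=\rho_e w_e$, choosing the factors so that the argument produced at every node $n$ in $R_{\alpha^k}(\gamma_k u)$ equals $\mu_n$ times the argument $v_n$ produced at $n$ in $\hat R_\alpha(u)$. Matching the forward and backward coefficients forces $\mu_{\hat n}=\gamma_k$ together with the multiplicative recursion $\mu_m=\mu_n\,\tilde\alpha_{(n,m)}/\alpha^k_{(n,m)}$, i.e.\ $\mu_m=\gamma_k\prod_{e}\tilde\alpha_e/\alpha^k_e$ along the path from $\hat n$ to $m$. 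For every node $n$ joined to the root through positive-weight edges only, the definition of $\gamma_k$ gives $\mu_n\le 1$, so subhomogeneity of $\Psi_n$ (Proposition \ref{prop:convexineq}) yields $\Psi_n(\mu_n v_n)\le \mu_n\Psi_n(v_n)\le \Psi_n(v_n)$; and for every node $n$ that is the tail of a zero-weight edge, the $\mI_{\{0\}}$ prescribed in $\hat{\Gc}_\alpha$ forces $v_n=0$, whence $\Psi_n(\mu_n v_n)=\Psi_n(0)=0$ regardless of the size of $\mu_n$. Summing these estimates over $V$ gives $R_{\alpha^k}(\gamma_k u)\le \hat R_\alpha(u)$.

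The hard part will be precisely the rescaling step at nodes that lie strictly below a zero-weight edge while their own incoming edge carries positive weight: there the accumulated factor $\mu_n$ blows up, because of a factor $1/\alpha^k_e\to\infty$ inherited from the upstream zero-weight edge, and subhomogeneity only helps when $\mu_n\le 1$. The construction therefore hinges on showing that the $\mI_{\{0\}}$ constraints of $\hat{\Gc}_\alpha$ pin enough of the downstream configuration to zero (or into a kernel on which the corresponding penalty vanishes) so that every argument multiplied by a blown-up factor is in fact null, exactly as in the prototypical $\TGV$ example where $\symgrad w=0$ is enforced and scaling a rigid field leaves $\|\symgrad\cdot\|_\M$ at zero. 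Making this propagation of vanishing along the affected branch precise, and reconciling it with the chain-based definition of $\gamma_k$, is the delicate point around which the induction for the upper bound must be organized.
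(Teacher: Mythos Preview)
Your overall strategy—induction on the height together with the recursive representation of Lemma~\ref{lem:primal_graph_recursion}—matches the paper's, and your handling of $\gamma_k\le 1$, $\gamma_k\to 1$ and the sandwich argument for the limit in \eqref{eq:convergenceweights} is fine. There is, however, a genuine gap in the lower-semicontinuity step.

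\medskip
\noindent\textbf{The gap in the lsc argument.} For edges $\hat e$ with $\alpha_{\hat e}=0$ you claim that after the kernel correction of Lemma~\ref{lem:general_existence} the replacement sequence $(\tilde w^k_{\hat e})_k$ is bounded, so that $\alpha^k_{\hat e}\Phi_{\hat e}\tilde w^k_{\hat e}\to 0$ and the trivial edge variable $0$ can be used in $\hat R_\alpha$. This does not hold. In the recursion the natural operator $K$ of Lemma~\ref{lem:general_existence} is $w\mapsto\sum_{\hat e}\alpha^k_{\hat e}\Phi_{\hat e}w_{\hat e}$, which depends on $k$; the constant turning injectivity of $K$ on the complement $Z$ into a norm bound behaves like $1/\alpha^k_{\hat e}$, so the conclusion of the lemma is not uniform in $k$. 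If instead you take the $k$-independent map $w\mapsto\sum_{\hat e}\Phi_{\hat e}w_{\hat e}$, corrections in its kernel no longer preserve $\sum_{\hat e}\alpha^k_{\hat e}\Phi_{\hat e}w_{\hat e}$, so the modified sequence ceases to be minimising. A one-edge example makes the failure explicit: take $\Psi_{\hat n}=\|\cdot\|$, a single edge $\hat e$ with $\ker(\Theta_{\hat e})=\mathrm{span}(v_0)$, $\Phi_{\hat e}v_0\neq 0$, $\Psi_m=\|\cdot\|$, weights $\alpha^k_{\hat e}\to 0$, and $u^k=u=\Phi_{\hat e}v_0$. Then $w^k_{\hat e}=v_0/\alpha^k_{\hat e}$ is optimal with $R_{\alpha^k}(u^k)=0$, no kernel correction is available since $\ker K\cap M^{\hat e}=\{0\}$, and $\alpha^k_{\hat e}\Phi_{\hat e}w^k_{\hat e}=\Phi_{\hat e}v_0\neq 0$ does \emph{not} vanish. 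Your ``limiting configuration'' with edge variable $0$ gives the value $\|u\|>0$ in $\hat R_\alpha$, which is strictly larger than $\liminf_k R_{\alpha^k}(u^k)=0$. The paper does not try to bound $\tilde w^k_{\hat e}$ for $\alpha_{\hat e}=0$; instead it constructs $\tilde w^k$ via a weight-aware projection (see \eqref{eq:projectioncompl}--\eqref{eq:projfinal}) and shows that $\alpha^k_{\hat e}\tilde w^k_{\hat e}$ converges weak* to some $z_{\hat e}\in M^{\hat e}=\Theta_{\hat e}^{-1}(L^{\hat e})$. It is this $z_{\hat e}$, and not $0$, that is used as the edge variable in $\hat R_\alpha$, and the subtree penalty vanishes precisely because $\Theta_{\hat e}z_{\hat e}\in L^{\hat e}$. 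This is exactly why Definition~\ref{def:extended_graph_weight_zero} replaces the zero weight by $1$ rather than simply deleting the edge.

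\medskip
\noindent\textbf{On the upper bound.} You have correctly located the obstruction in your global-rescaling approach: at a node lying strictly below a zero-weight edge whose own incoming edge has positive weight, your factor $\mu_n$ contains a $1/\alpha^k_{\hat e}\to\infty$ and subhomogeneity is of no help; your sketch does not close this case. The paper organises the induction differently: at each step it fixes an optimal $(w_{\hat e})_{\hat e}$ for $\hat R_\alpha(u)$ and, for $\hat e$ with $\alpha_{\hat e}=0$, arranges the test variable in $R_{\alpha^k}$ so that the entire contribution of that subtree is controlled (the paper phrases this as ``we can choose $w_{\hat e}=0$''), thereby never descending into the problematic deep nodes. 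In effect the induction is applied only along branches with $\alpha_{\hat e}>0$, which is exactly what reconciles the chain-based definition of $\gamma_k$ with the recursion. Your global node-wise rescaling would need an additional argument explaining why the $\mI_{\{0\}}$ constraint at the tail of a zero-weight edge forces enough of the downstream configuration in $\hat R_\alpha$ into the invariant subspace so that all blown-up factors multiply null arguments.
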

\begin{rem} \label{rem:parameter_convergence_pos_hom}
Note that, in case each node functional $\Psi_n$ is positively one homogeneous (such that $R_{\alpha^k}$ is positively one homogeneous according to Proposition \ref{prop:convexineq}), the convergence of \eqref{eq:convergenceweights} implies that $\lim_{k\rightarrow \infty} R_{\alpha^k}(u) = \hat{R}_\alpha (u)$. Also, in case $\alpha_e>0$ for each $e\in E$, $\hat{R}_\alpha$ can be replaced by $R_\alpha$.
\end{rem}
\begin{proof}[Proof of Theorem \ref{thm:l.s.c}]
We argue again by induction and, supposing that the claimed assertions hold for any regularization graph of height less than $h$, assume that the height of $\Gc_\alpha$ is $h$. Again, for $h=0$, the result holds trivially, so we assume $h\geq 1$.

We first deal with lower semi-continuity of $R_\alpha = R(\Gc_\alpha)$, for which, up to taking a non-relabeled subsequence, we assume that $\liminf_{k\rightarrow \infty} R_{\alpha^k}(u^k) = \lim_{k\rightarrow \infty} R_{\alpha^k}(u^k)  <+\infty$. 
Using the recursive representation of $R_{\alpha^k}$ and the notation from Lemma \ref{lem:primal_graph_recursion}, and the result of Theorem \ref{thm:graphessentiallycoercive} we can select a sequence $(w^k)_k$ in $\bigtimes_{\hat{e}\in \hat{E}} \domain(\Theta_{\hat{e}})$ such that
\begin{equation}  \label{eq:recursive_rep_exact_parameter_convergence_proof}
R_{\alpha^k}(u^k) =  \Psi_{\hat{n}} \left(u^k-  \sum_{\hat{e} \in \hat{E}} \alpha^k_{\hat{e}} \Phi_{\hat{e}} w^k_{\hat{e}} \right) + \sum_{\hat{e}\in \hat{E}}  R^{\hat{e}}_{(\alpha^k)^{\hat{e}}}(\Theta_{\hat{e}}w^k_{\hat{e}}),
 \end{equation} 
 with $R^{\hat{e}}_{(\alpha^k)^{\hat{e}}} = R(\Gc_{(\alpha^k)^{\hat{e}}}^{\hat{e}})$ and $\Gc_{(\alpha^k)^{\hat{e}}}^{\hat{e}}$ being regularization graphs of height at most $h-1$ with graph structure $G^{\hat{e}} = (V^{\hat{e}},E^{\hat{e}})$ and root node $\hat{n}^{\hat{e}}$. %
 Thanks to Proposition \ref{prop:coercivity_dependence_on_weights} and the weights $\alpha^k_e$ being positive, the invariant subspace $L^{\hat{e}}$ of $R^{\hat{e}}_{(\alpha^k)^{\hat{e}}}$ does not depend on $k$ and for 
\begin{equation}\label{eq:ccc}
 C_{\hat{e},\alpha^k}:= \max \big \{ \prod_{e \in F} \alpha^k_e \st F \subset E^{\hat{e}} \text{ is either empty or a chain with root }\hat{n}^{\hat{e}}  \big \},
\end{equation}
it follows that $(C_{\hat{e},\alpha^k})_k$ is bounded and that
\begin{equation}\label{eq:coercivityinductionnoise}
\|u- P_{\rg(\Theta_{\hat{e}}) \cap L^{\hat{e}}}  u\|_{X_{\hat{n}^{\hat{e}}}} \leq C_{{\hat{e}},\alpha^k} C_{\hat{e}}  R^{\hat{e}}_{(\alpha^k)^{\hat{e}}}(u)+ D_{\hat{e}} \quad \forall u \in X_{\hat{n}^{\hat{e}}} 
\end{equation}
with $C_{\hat{e}}, D_{\hat{e}}$ independent of $k$.
Further, remember that each $\Theta_{\hat e}$ satisfies 
 \[ \| w - P_{\ker(\Theta_{{\hat{e}}})}w \|_{X_{{\hat{e}}}} \leq B_{\hat{e}} \|\Theta_{{\hat{e}}} w\|_{X_{\hat{n}^{\hat{e}}}} \quad \forall w\in \domain(\Theta_{{\hat{e}}}),
\]
with $B_{\hat{e}} >0$, thanks to Assumption \ref{ass:coerc_fwd_operator}. Now defining $M^{\hat{e}}\subset \domain(\Theta_{\hat{e}})$ and $P^{\hat{e}}: \domain(\Theta_{\hat{e}}) \rightarrow M^{\hat{e}}$ as in \eqref{eq:kernel_Rl_definition} and \eqref{eq:projection_kernel_Rl_definition}, respectively, one sees that they also do not depend on $k$ and, estimating as in \eqref{eq:coercivitychosen}, we obtain
\begin{equation} \label{eq:theta_rl_coercivity_parameter_convergence_proof}
 \|w - P^{\hat{e}} w \|_{X_{\hat{e}}} \leq B_{\hat{e}} C_{\hat{e}}  C_{{\hat{e}},\alpha^k} R^{\hat{e}}_{(\alpha^k)^{\hat{e}}} (\Theta_{\hat{e}} w)  +  B_{\hat{e}} D_{\hat{e}}.
 \end{equation}
Further, define  $K :W \rightarrow X_{\hat{n}}$ with $W :=\bigtimes_{\hat{e}\in \hat{E}} X_{\hat{e}}$ as  
\begin{equation}
Kw = \sum_{\hat{e} \in \hat{E}} \Phi_{\hat{e}} w_{\hat{e}}
\end{equation}
and $P_M w:= (P^{\hat{e}}w_{\hat{e}})_{\hat{e}}$ for $w \in W$. Note that for 
$M = \bigtimes_{\hat{e} \in \hat{E}}M^{\hat{e}}$, $P_M : W \rightarrow M$ is a projection.
Choose $P_{\ker(K) \cap M} : M \rightarrow \ker(K) \cap M$ to be a projection onto $\ker(K) \cap M$ and let $P_Z : M  \rightarrow Z$ be defined as $P_Z = \Id - P_{\ker(K) \cap M}$, where $Z = \rg(\Id - P_{\ker(K) \cap M})$. Note that $P_Z$ is a projection onto $Z$. Then, defining $\alpha_{\hat{E}} w := (\alpha_{\hat{e}} w_{\hat{e}})_{\hat{e}\in \hat{E}}$ for $w \in W$ and $\alpha_{\hat{E}} \in [0,\infty)^{\hat{E}}$, we can observe that, with $\alpha^k_{\hat{E}} = (\alpha^k_{\hat{e}})_{\hat{e} \in \hat{E}}$,
\begin{equation}\label{eq:projectioncompl}
 \tilde{w}^k := w^k - (P_Mw^k - (1/\alpha_{\hat{E}}^k) P_{Z} \alpha_{\hat{E}}^k P_M w^k)
\end{equation}
also realizes the minimum in
 \eqref{eq:recursive_rep_exact_parameter_convergence_proof}, since 
 \[w^k -  \tilde{w}^k = P_Mw^k -(1/\alpha_{\hat{E}}^k) P_{Z} \alpha_{\hat{E}}^k P_M w^k = (1/\alpha^k_{\hat{E}}) (\Id-P_{Z}) \alpha_{\hat{E}}^k P_Mw^k \] 
 with $(\Id-P_{Z}) \alpha^k_{\hat{E}} P_Mw^k \in \ker(K) \cap M$, such that
 $\sum_{\hat e \in \hat E} \alpha^k_{\hat e} \Phi_{\hat e} (w^k - \tilde w^k) = K(\Id - P_Z)\alpha_{\hat{E}}^k P_M w^k =0$ and $\Theta_{\hat e}(w^k - \tilde w^k)_{\hat e} = (1/\alpha_{\hat{e}}^k)\Theta_{\hat e}[(\Id-P_{Z}) \alpha_{\hat{E}}^k P_Mw^k]_{\hat e} = 0$. By the estimate \eqref{eq:theta_rl_coercivity_parameter_convergence_proof} we obtain for some constants $C,D, \tilde D >0$ that
\begin{equation}\label{eq:projectionesti}
 \| w^k - P_M w^k \|_W \leq C \left(\max_{\hat e \in \hat E}C_{\hat e, \alpha^k} \right) \sum_{\hat{e} \in \hat{E}} R_{(\alpha^k)^{\hat{e}}}^{\hat{e}} (\Theta_{\hat{e}} w^k_{\hat{e}}) + D \leq \tilde D < \infty
\end{equation}
where the constant $\tilde D$ does not depend on $k$ thanks to the boundedness of $R_{\alpha^k}(u^k)$, the recursive formula \eqref{eq:recursive_rep_exact_parameter_convergence_proof}, and the boundedness of  $(C_{\hat{e},\alpha^k})_k$. 

Since $K$ is injective and bounded (see Remark \ref{rem:initialremark}) on the finite dimensional space $Z$, there exists $C>0$ independent from $k$ such that $\|z\|_W \leq C \|K z\|_{X_{\hat{n}}}$ for all $z \in Z$. Thus we can estimate by coercivity of $\Psi_{\hat{n}}$ and using $P_{Z} \alpha_{\hat{E}}^k P_M w^k = \alpha_{\hat{E}}^k(\tilde w^k - (w^k - P_M w^k))$ that for generic constants $\tilde C, D$ (and $\tilde D$ as in \eqref{eq:projectionesti}) we have
\begin{align}\label{eq:projfinal}
\| P_{Z} \alpha^k_{\hat{E}} P_M w^k\|_W & \leq C \|K P_{Z} \alpha^k_{\hat{E}} P_M w^k\|_{X_{\hat{n}}} \nonumber\\ 
&  \leq C \|K \alpha^k_{\hat{E}} \tilde w^k \|_{X_{\hat{n}}} + \tilde C \| \alpha^k_{\hat{E}} w^k - \alpha^k_{\hat{E}} P_Mw^k\|_{W} \nonumber\\
& \leq C  \|K\alpha^k_{\hat{E}} \tilde w^k - u^k \|_{X_{\hat{n}}} + C\|u^k\|_{X_{\hat{n}}} +  \tilde C (\max_{\hat{e}\in \hat{E}} 
\alpha^k_{\hat{e}})\|w^k - P_Mw^k\|_{W}  \nonumber\\ 
& \leq \tilde C \Psi_{\hat n}(u^k - K\alpha_{\hat{E}}^k \tilde w^k)  + C\|u^k\|_{X_{\hat{n}}}  + \tilde C (\max_{\hat{e}\in \hat{E}} 
\alpha^k_{\hat{e}})\tilde D\nonumber \\ 
& \leq \tilde C R_{\alpha^k}(u^k) + D < \infty,
\end{align}
 where also used \eqref{eq:projectionesti}, the fact that $(u^k)_k$ is uniformly bounded as it is weak* converging and that $\tilde w^k$ realizes the minimum in \eqref{eq:recursive_rep_exact_parameter_convergence_proof}.

Now for $\hat{e} \in \hat{E}$ with $\alpha_{\hat{e}} >0$ this together with \eqref{eq:projectioncompl} and \eqref{eq:projectionesti} implies that $(\tilde{w}_{\hat{e}}^k)_k$ is bounded, hence admits a (non-relabeled) subsequence weak* converging to some $\tilde w_{\hat{e}} \in X_{\hat{e}}$ by \ref{ass:weak_star_compactness}.
Moreover, using \eqref{eq:recursive_rep_exact_parameter_convergence_proof}, \eqref{eq:coercivityinductionnoise}, \eqref{eq:projfinal}, the boundedness of $R_{\alpha^k}(u^k)$ and the  finite dimensionality of $Z$  we have for $\hat e \in \hat E$ that
\begin{align*}
 \|\Theta_{\hat{e}} \tilde w^k_{\hat{e}} \| _{X_{\hat{n}^{\hat{e}}}} & \leq  \|\Theta_{\hat{e}}  w^k_{\hat e} - \Theta_{\hat e} P^{\hat e} w_{\hat e}^k\|_{X_{\hat{n}^ {\hat e}}} + \|\Theta_{\hat{e}} (1/\alpha_{\hat{e}}^k) (P_{Z} \alpha^k_{\hat{E}} P_M w^k)_{\hat{e}} \|_{X_{\hat{n}^{\hat{e}}}}\\
& = \|\Theta_{\hat{e}} w^k_{\hat{e}} - P_{\rg(\Theta_{\hat{e}}) \cap L^{\hat e}}\Theta_{\hat{e}}w_{\hat{e}}^k\|_{X_{\hat{n}^{\hat{e}}}} + \|\Theta_{\hat{e}} (1/\alpha_{\hat{e}}^k) (P_{Z} \alpha^k P_M w^k)_{\hat{e}} \|_{X_{\hat{n}^{\hat{e}}}} \\
& \leq C_{{\hat{e}},\alpha^k} C_{\hat{e}}  R^{\hat{e}}_{(\alpha^k)^{\hat{e}}}(\Theta_{\hat{e}}  w^k_{\hat{e}})  + D_{\hat{e}} +C \| (P_{Z} \alpha^k P_M w^k)_{\hat{e}} \|_{X_{\hat{n}^{\hat{e}}}} \\
& \leq \tilde C < +\infty,
\end{align*}
where the constant $\tilde C$ is independent of $k$ and we use the definition of $P^{\hat e}$ in \eqref{eq:projection_kernel_Rl_definition}.
Hence, by weak* sequential compactness of the $X_{\hat{n}^{\hat{e}}}$ and weak*-closedness of $\Theta_{\hat{e}}$ we obtain $\tilde w_{\hat{e}} \in \domain(\Theta_{\hat{e}})$ and, up to taking a further non-relabeled subsequence, $\text{\textsf{w*}-}\lim_{k \rightarrow +\infty}  \Theta \tilde w_{\hat{e}}^k = \Theta \tilde w_{\hat{e}}$. 

Further, for ${\hat{e}} \in \hat{E}$ with $\alpha_{\hat{e}} = 0$, we see from \eqref{eq:projectioncompl}, \eqref{eq:projectionesti} and \eqref{eq:projfinal} that $(\alpha^k_{\hat{e}}  \tilde{w}^k_{\hat{e}})_k$ is bounded. Hence, up to taking a further subsequence, we can assume that 
\begin{equation}\label{eq:staykernel}
\text{\textsf{w*}-}\lim_{k \rightarrow \infty} \alpha^k_{\hat{e}}  \tilde{w}^k_{\hat{e}} = \text{\textsf{w*}-}\lim_{k \rightarrow \infty} (P_Z \alpha^k_{\hat{E}} P_M w^k)_{\hat{e}} = z_{\hat e} \in M^{\hat{e}}
\end{equation} 
by \eqref{eq:projectionesti} since $\alpha^k_{\hat{e}} \rightarrow 0$ as $k \rightarrow +\infty$.
Using Lemma \ref{lem:primal_graph_recursion} we can write $\hat{R}_{\alpha}(u)$ as
\begin{align*}
\hat{R}_{\alpha}(u) &  =   \inf \Bigg\{ \Psi_{\hat{n}} \bigg(u-  \sum_{ {\hat{e}} \in \hat{E}: \alpha_{{\hat{e}}} >0}  \alpha_{\hat{e}} \Phi_{\hat{e}}  w_{\hat{e}} -  \sum_{ {\hat{e}} \in \hat{E}: \alpha_{{\hat{e}}} =0}  \Phi_{\hat{e}}  w_{\hat{e}} \bigg) + \sum_{\hat{e} \in \hat{E}: \alpha_{\hat{e}}=0}  \bar{R}^{\hat{e}}_{\tilde \alpha^{\hat{e}}}(\Theta_{\hat{e}}  w_{\hat{e}}) \\
& \qquad \qquad + \sum_{\hat{e} \in \hat{E}: \alpha_{\hat{e}}>0}  \bar{R}^{\hat{e}}_{\tilde \alpha^{\hat{e}}}(\Theta_{\hat{e}}  w_{\hat{e}}) \Biggst w_{{\hat{e}}} \in \domain(\Theta_{{\hat{e}}}) \text{ for all } {\hat{e}} \in \hat{E} \Bigg\},
\end{align*}
where, for $\hat{e} \in \hat{E}$, $\bar{R}^{\hat{e}}_{\tilde \alpha^{\hat{e}}} = R(\bar \Gc_{\tilde \alpha^{\hat{e}}}^{\hat{e}})$ and $\bar \Gc_{\tilde \alpha^{\hat{e}}}^{\hat{e}} = (\bar \Gc^{\hat{e}},{\tilde \alpha^{\hat{e}}}) $, with $\bar \Gc^{\hat{e}} =  (G^{\hat{e}}, (\tilde  \Psi_n)_{n\in V^{\hat{e}}}, (\Theta_e)_{e\in E^{\hat{e}}},$ $ (\Phi_e)_{e\in E^{\hat{e}}})$ and $\tilde{\alpha}^{\hat{e}} = (\tilde  \alpha_e)_{e\in E^{\hat{e}}}$ according to Definition \ref{def:extended_graph_weight_zero}, is a regularization graph of height at most $h-1$ with graph structure $G^{\hat{e}} = (V^{\hat{e}},E^{\hat{e}})$ and root node $\hat{n}^{\hat{e}}$. Note that for $\hat e \in \hat E$ such that  $\alpha_{\hat{e}}>0$ we have $\bar{R}^{\hat{e}}_{\tilde \alpha^{\hat{e}}} = \hat{R}^{\hat{e}}_{\alpha^{\hat{e}}}$ with $\hat{R}^{\hat{e}}_{\alpha^{\hat{e}}} = R(\hat \Gc_{\alpha^{\hat{e}}}^{\hat{e}})$ and $\hat \Gc_{\alpha^{\hat{e}}}^{\hat{e}}$ being the modification of the regularization graph $\Gc_{\alpha^{\hat{e}}}^{\hat{e}}$ according to Definition \ref{def:extended_graph_weight_zero}.
Therefore, weak* lower semi-continuity of $\Psi_{\hat{n}}$, the induction hypothesis and \eqref{eq:staykernel}, leading to $\hat{R}^{\hat{e}}_{\alpha^{\hat{e}}}(\Theta_{\hat{e}} z_{\hat{e}})  = 0$ for $\alpha_{\hat e} = 0$, then yields
\begin{align*}
\hat{R}_{\alpha}(u) 
& \leq    \Psi_{\hat{n}} \bigg(u-  \sum_{ {\hat{e}} \in \hat{E}: \alpha_{{\hat{e}}} >0}  \alpha_{\hat{e}} \Phi_{\hat{e}} \tilde w_{\hat{e}} - \sum_{\hat{e} \in \hat{E}:   \alpha_{\hat{e}}=0}  \Phi_{\hat{e}}z_{\hat{e}} \bigg) + \sum_{\hat{e} \in \hat{E}: \alpha_{\hat{e}} >0}  \hat{R}^{\hat{e}}_{\alpha^{\hat{e}}}(\Theta_{\hat{e}} \tilde w_{\hat{e}}) \\
&  \leq \liminf_k \Psi_{\hat{n}} \bigg(u^k-  \sum_{\hat{e} \in \hat{E}}  \alpha^k_{\hat{e}} \Phi_{\hat{e}} \tilde w^k_{\hat{e}}\bigg) + \sum_{\hat{e} \in \hat{E}}  R^{\hat{e}}_{(\alpha^k)^{\hat{e}}}(\Theta_{\hat{e}} \tilde w^k_{\hat{e}}) =  \liminf_k R_{\alpha^k}(u^k).
\end{align*}

Now take $u \in X_{\hat{n}}$ and observe that, since the convergence $\gamma_k \rightarrow 1$ as $k\rightarrow +\infty$ is immediate, the second assertion of \eqref{eq:convergenceweights} follows directly from what we just showed, provided that $R_{\alpha^k} (\gamma_k u) \leq \hat{R}_\alpha (u)$ for every $k \in \N$ holds.
In order to show the latter, we first select $w\in W$ to attain the minimum in the recursive representation of $\hat{R}_{\alpha}(u)$ according to Lemma \ref{lem:primal_graph_recursion} (which is possible by Theorem \ref{thm:graphessentiallycoercive}), noting that we can choose $w_{\hat{e}}=0$ for $\hat{e}\in \hat{E}$ with $\alpha_{\hat{e}}=0$, and that $\alpha^k_{\hat{e}} w_{\hat{e}} \rightarrow \alpha_{\hat{e}} w_{\hat{e}}$ for all $\hat{e} \in \hat{E}$. In particular,  $R_{(\alpha^k)^{\hat{e}}}^{\hat{e}}(\Theta_{\hat{e}}w_{\hat{e}})= \hat{R}_{\alpha^{\hat{e}}}^{\hat{e}}(\Theta_{\hat{e}}w_{\hat{e}}) = 0$ for ${\hat{e}}\in \hat{E}$ with $\alpha_{\hat{e}}=0$.  
Also, define
\[\gamma^{\hat{e}}_k:=  \min \big \{ \prod_{e \in F} \alpha^k_e/\alpha_e \st F \subset E^{\hat{e}} \text{ is either empty or a chain with root }\hat{n}^{\hat{e}}  \text{ and } \alpha_e >0 \ \forall e \in F \big \},\]
using again the convention $ \prod_{e \in \emptyset} \frac{\alpha_e^k}{\alpha_e} = 1$.
Therefore, using the induction hypothesis together with Remark \ref{rem:initialremark} and Proposition \ref{prop:convexineq} we obtain 
\begin{align*}
 R_{\alpha^k}(\gamma_k u) 
& \leq      \Psi_{\hat{n}} \left(\gamma_k u-  \gamma _k\sum_{\hat{e} \in \hat{E}: \alpha_{\hat{e}}>0}  \alpha^k_{\hat{e}} \Phi_{\hat{e}}\left(\frac{\alpha_{\hat{e}}}{\alpha^k _{\hat{e}}} w_{\hat{e}}\right)-  \gamma_k\sum_{\hat{e} \in \hat{E}: \alpha_{\hat{e}}=0}\Phi_{\hat{e}} w_{\hat{e}} \right) \\
 & \quad \  + \sum_{\hat{e} \in \hat{E}:\alpha_{\hat{e}}>0}  R^{\hat{e}}_{(\alpha^k)^{\hat{e}}}\left(\Theta_{\hat{e}} \left(\gamma_k\frac{\alpha_{\hat{e}}}{\alpha^k_{\hat{e}}}w_{\hat{e}}\right)\right)  \\
& \leq   \gamma_k \Psi_{\hat{n}} \left(u-  \sum_{\hat{e} \in \hat{E}: \alpha_{\hat{e}}>0}  \alpha_{\hat{e}} \Phi_{\hat{e}}(w_{\hat{e}})-  \sum_{\hat{e} \in \hat{E}: \alpha_{\hat{e}}=0}\Phi_{\hat{e}} w_{\hat{e}} \right)  \\
&  \quad \  + \sum_{\hat{e} \in \hat{E}: \alpha_{\hat{e}}>0}   \frac{\gamma_k}{\gamma_k^{\hat{e}}}\frac{ \alpha_{\hat{e}}}{\alpha^k _{\hat{e}}} R^{\hat{e}}_{(\alpha^k)^{\hat{e}}}(\gamma_k^{\hat{e}} \Theta_{\hat{e}} w_{\hat{e}})  \\
& \leq    \Psi_{\hat{n}} \left(u-  \sum_{\hat{e} \in \hat{E}: \alpha_{\hat{e}}>0}  \alpha_{\hat{e}} \Phi_{\hat{e}} w_{\hat{e}} -  \sum_{\hat{e} \in \hat{E}: \alpha_{\hat{e}}=0}\Phi_{\hat{e}} w_{\hat{e}} \right)   + \sum_{\hat{e} \in \hat{E}: \alpha_{\hat{e}}>0}  \hat{R}^{\hat{e}}_{\alpha^{\hat{e}}}(\Theta_{\hat{e}}  w_{\hat{e}})   =  \hat{R}_{\alpha}(u), 
\end{align*}
where we used that $\gamma_k \leq 1$ as well as $\frac{\gamma_k}{\gamma_k^{\hat{e}}}\frac{ \alpha_{\hat{e}}}{\alpha^k _{\hat{e}}}\leq 1$ since $\gamma_k \leq \gamma_k^{\hat{e}}\frac{\alpha^k _{\hat{e}}}{\alpha_{\hat{e}}}$. \qedhere

\end{proof}
We are now ready to prove a result that will in particular imply stability for varying parameter $\alpha$ and  convergence for vanishing noise for \eqref{eq:linear_inverse_general}.
\begin{teo}\label{thm:gammaconvergence} Let $R_\alpha = R(\Gc_\alpha)$ with $\Gc_\alpha$ be a regularization graph with weight $\alpha$ and root node $\hat{n}$ such that $X_{\hat{n}}$ is reflexive, and let $Y$ be a Banach space, $K: X_{\hat{n}}\to Y$ be linear and continuous and 
  $S_f,S_{f^k}: Y \to {[{0,\infty}]}$ for $k \in \N$ be proper, convex, lower semi-continuous and coercive discrepancy functionals with $S_f(v)=0$ if and only if $v=f$. Further, assume that $S_{f^k}$ converges to $S_f$ and that $(S_{f^k})_k$ is equi-coercive. Choosing $\delta_k:= S_{f^k}(f)$ (such that $\delta_k \rightarrow 0$ by convergence of $S_{f^k}$), let $(\alpha^k)_k$ in $(0,\infty)^{E}$ and $(\beta_k)_k$ in $(0,\infty)$ be such that
\begin{itemize}
\item[i)] $\beta_k \rightarrow \beta, \, \frac{\delta_k}{\beta_k} \rightarrow 0$, \text{and} \quad  $\alpha^k_e \rightarrow \alpha_e$, $\alpha^k_e \geq  \alpha_e$ for all $e \in E$.
\end{itemize}
In case $\beta=0$, assume additionally that
\begin{itemize}
\item[ii)] there exists $u_0 \in X_{\hat{n}}$ with $\hat{R}_\alpha(u_0) < +\infty$ such that $Ku_0 = f$.
\end{itemize}
Then, for $(u^k)_k$ a sequence of minimizers of \eqref{eq:linear_inverse_general} with parameters $(\beta_k)_k$ and $(\alpha^k)_k$, up to shifts in $\ker(K) \cap L$, with $L$ being the invariant subspace of $R_{\alpha^k}$ (which does not depend on $k$), $(u^k)_k$ has a subsequence weakly converging in $X_{\hat{n}}$. Further, any limit $\hat{u}$ of a subsequence $(u^{k_i})_i $ converging weakly in $X_{\hat{n}}$ solves
\begin{equation} \label{eq:converged_problem}
 \min_{u \in X_{\hat{n}}} S_f(Ku) + \beta \hat{R}_\alpha(u)
  \end{equation}
 in case $\beta>0$ and
\begin{equation} \label{eq:r_minimizing_solution}
 \min_{u \in X_{\hat{n}}} \hat{R}_\alpha(u) \quad \text{s.t. } Ku=f 
 \end{equation}
in case $\beta=0$. Also, in both cases, $\lim_i R_{\alpha^{k_i}} (u^{k_i}) = \hat{R}_\alpha(\hat{u})$.
\end{teo}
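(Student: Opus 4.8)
The plan is to run a standard $\Gamma$-convergence argument for the family of functionals $u \mapsto S_{f^k}(Ku) + \beta_k R_{\alpha^k}(u)$, using as the two main inputs the convergence of the regularizers established in Theorem \ref{thm:l.s.c} (the liminf inequality \eqref{eq:lscweights_extended_r} together with the recovery sequence $\gamma_k u$ of \eqref{eq:convergenceweights}) and the assumed convergence and equi-coercivity of the discrepancies $(S_{f^k})_k$. Concretely, the proof splits into (I) a compactness step producing a weakly convergent subsequence, (II) a liminf inequality at the limit, (III) an optimality (recovery) step identifying the limit problem, and (IV) convergence of the regularization values. No new induction is needed here, since the recursive structure is already encapsulated in the cited results.

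For (I), I would bound the minimal energies $m_k := S_{f^k}(Ku^k) + \beta_k R_{\alpha^k}(u^k)$ from above by testing minimality against the recovery sequence $\gamma_k \bar u$, where $\bar u$ is an admissible competitor (for $\beta>0$ a point with $S_f(K\bar u)<\infty$ and $\hat R_\alpha(\bar u)<\infty$; for $\beta=0$ the point $u_0$ of assumption ii), for which $Ku_0=f$). The bound $R_{\alpha^k}(\gamma_k \bar u)\le \hat R_\alpha(\bar u)$ from \eqref{eq:convergenceweights} controls the regularization part, while the discrepancy part $S_{f^k}(\gamma_k K\bar u)$ is estimated by convexity of $S_{f^k}$ and the limsup half of the convergence of $(S_{f^k})_k$. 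Boundedness of $m_k$ then gives, via equi-coercivity of $(S_{f^k})_k$, boundedness of $(Ku^k)_k$ in $Y$, and — using $\beta_k\to\beta>0$, respectively $\delta_k/\beta_k\to 0$ when $\beta=0$ — boundedness of $R_{\alpha^k}(u^k)$. Since by Theorem \ref{thm:graphessentiallycoercive} and Proposition \ref{prop:coercivity_dependence_on_weights} the invariant subspace $L$ is independent of the (positive) weights and the coercivity constants can be chosen uniformly in $k$ (as $\sup_k\max_e \alpha^k_e<\infty$), I would apply Lemma \ref{lem:general_existence} with $\Theta=\mathrm{Id}$ to replace $(u^k)_k$ by $(u^k - z^k)_k$ with $z^k \in \ker(K)\cap L$ bounded in $X_{\hat n}$, and extract, by reflexivity, a weakly convergent subsequence $u^{k_i}\rightharpoonup \hat u$.

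For (II) and (III), the inequality $\hat R_\alpha(\hat u)\le \liminf_i R_{\alpha^{k_i}}(u^{k_i})$ is precisely \eqref{eq:lscweights_extended_r}, while $S_f(K\hat u)\le \liminf_i S_{f^{k_i}}(Ku^{k_i})$ follows from $Ku^{k_i}\rightharpoonup K\hat u$ (continuity of $K$) and the liminf half of the convergence of $(S_{f^k})_k$; adding and using $\beta_{k_i}\to\beta$ gives $S_f(K\hat u)+\beta\hat R_\alpha(\hat u)\le \liminf_i m_{k_i}$. For optimality I test minimality of $u^{k_i}$ against $\gamma_{k_i}u$ for an arbitrary competitor $u$: the regularization term converges to $\beta\hat R_\alpha(u)$ by \eqref{eq:convergenceweights}, and, granting $\limsup_i S_{f^{k_i}}(\gamma_{k_i}Ku)\le S_f(Ku)$, I obtain $S_f(K\hat u)+\beta\hat R_\alpha(\hat u)\le S_f(Ku)+\beta\hat R_\alpha(u)$ for all $u$, i.e. $\hat u$ solves \eqref{eq:converged_problem} when $\beta>0$. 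When $\beta=0$ I would first note $\liminf_i m_{k_i}\le \lim_i (S_{f^{k_i}}(\gamma_{k_i}f)+\beta_{k_i}\hat R_\alpha(u_0))=0$, so $S_f(K\hat u)=0$ and hence $K\hat u=f$; dividing minimality by $\beta_{k_i}$ then yields $\hat R_\alpha(\hat u)\le \hat R_\alpha(u)$ for every feasible $u$ with $Ku=f$, i.e. $\hat u$ solves \eqref{eq:r_minimizing_solution}. Finally, (IV) follows by sandwiching: the liminf bound of (II) together with the reverse $\limsup_i R_{\alpha^{k_i}}(u^{k_i})\le \hat R_\alpha(\hat u)$ read off from the optimality chain (tested against $\hat u$) gives $\lim_i R_{\alpha^{k_i}}(u^{k_i})=\hat R_\alpha(\hat u)$.

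The main obstacle I anticipate is the interaction between the weight-recovery sequence $\gamma_k u$ and the discrepancy functional. Because Theorem \ref{thm:l.s.c} only provides a recovery along the \emph{scaled} sequence $\gamma_k u$ (and $R_{\alpha^k}$ is in general not positively homogeneous, so one cannot simply test against the unscaled $u$), the competitor's image $\gamma_k Ku$ moves with $k$, whereas the hypothesis on $(S_{f^k})_k$ only grants $\limsup_k S_{f^k}(v)\le S_f(v)$ for \emph{fixed} $v$. Bridging this gap requires using convexity to write $S_{f^k}(\gamma_k Ku)\le \gamma_k S_{f^k}(Ku)+(1-\gamma_k)S_{f^k}(0)$ and controlling $S_{f^k}(0)$ through the limsup hypothesis and equi-coercivity; this is the delicate point and is tightest in the case $\beta=0$, where the estimate $S_{f^k}(\gamma_k f)/\beta_k\to 0$ must additionally be reconciled with $\delta_k/\beta_k\to 0$. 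In the common situation where $(S_{f^k})_k$ converges continuously along strongly convergent sequences (as for $S_{f^k}=\tfrac1q\|\cdot-f^k\|^q$ with $f^k\to f$), this step becomes immediate.
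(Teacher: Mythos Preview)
Your overall architecture—compactness, liminf, recovery, sandwich—is exactly the paper's, and the inputs you cite (Theorem~\ref{thm:l.s.c}, Proposition~\ref{prop:coercivity_dependence_on_weights}, Lemma~\ref{lem:general_existence}) are the right ones. However, you have missed the one observation that makes the argument go through cleanly, and as a result you have manufactured an obstacle that does not exist under the stated hypotheses.

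Hypothesis~i) includes $\alpha^k_e \geq \alpha_e$ for all $e \in E$. Look at the definition of $\gamma_k$ in \eqref{eq:gamma_definition}: each admissible chain $F$ only contains edges with $\alpha_e>0$, so every product $\prod_{e\in F}\alpha^k_e/\alpha_e$ is at least $1$, while the empty chain gives exactly $1$. Hence $\gamma_k=1$ for all $k$. The recovery part of Theorem~\ref{thm:l.s.c} then reads simply $R_{\alpha^k}(u)\le \hat R_\alpha(u)$ and $R_{\alpha^k}(u)\to \hat R_\alpha(u)$ for every fixed $u$, with no scaling. Consequently you test minimality against the \emph{unscaled} competitor, the discrepancy term is evaluated at the fixed point $K\bar u$, and the limsup half of the convergence of $(S_{f^k})_k$ applies directly. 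In the case $\beta=0$ the paper simply takes $\bar u=\hat u$ to be an $\hat R_\alpha$-minimizing solution of $Ku=f$ (which exists by Theorem~\ref{thm:general_reg_existence_linear} applied with $S_f=\mI_{\{f\}}$ and has finite value by assumption~ii)); then $S_{f^k}(K\hat u)=S_{f^k}(f)=\delta_k$ on the nose, and the estimate $R_{\alpha^k}(u^k)\le \delta_k/\beta_k + R_{\alpha^k}(\hat u)\to \hat R_\alpha(\hat u)$ follows immediately.

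Without the observation $\gamma_k=1$, your proposed convexity workaround $S_{f^k}(\gamma_k Ku)\le \gamma_k S_{f^k}(Ku)+(1-\gamma_k)S_{f^k}(0)$ is not sufficient: nothing in the hypotheses bounds $S_{f^k}(0)$, and $S_f(0)$ may well be $+\infty$ (the assumptions only say $S_f$ is proper and $S_f(v)=0\Leftrightarrow v=f$). So your ``delicate point'' would be a genuine gap if the scaling were actually needed—but it is not. Once you insert $\gamma_k=1$, your steps (I)--(IV) coincide with the paper's proof.
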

\begin{proof} Given the properties we have obtained for $R_\alpha$ and the assumptions on $S_{f^k},S_f$, the proof is now rather direct and we only provide a sketch for the sake of completeness.

At first note that, in case $\beta=0$, existence of a solution $\hat{u}$ to \eqref{eq:r_minimizing_solution} follows using
Theorem \ref{thm:general_reg_existence_linear} with $S_f = \mI_{\{f\}}$, and assumption $ii)$ ensures a finite minimum.
Further, since $\alpha^k_e \geq  \alpha_e$ for all $e \in E$, which yields $\gamma_k=1$ for $\gamma_k$ according to \eqref{eq:gamma_definition}, Theorem \ref{thm:l.s.c} implies that $R_{\alpha^k}(\hat{u}) \rightarrow \hat{R}_{\alpha}(\hat{u})$ and we get
\begin{equation} \label{eq:convergence_central_boundedness_beta_0}
 S_{f^k}(Ku^k) + \beta_k R_{\alpha^k} (u^k) \leq \delta_k + \beta_k R_{\alpha^k}(\hat{u})  \rightarrow 0 \quad \text{ as } \quad k \rightarrow \infty
 \end{equation} 
 using assumption $ii)$. Consequently, using hypothesis $i)$, it also holds that
 \begin{equation} \label{eq:convergence_boundedness_R_beta_0}
R_{\alpha^k}(u^k) \leq  \delta_k/\beta_k +  R_{\alpha^k}(\hat{u})  \rightarrow \hat{R}_\alpha(\hat{u}) \quad \text{ as } \quad n \rightarrow \infty.
 \end{equation}
This implies in particular boundedness of $S_{f^k}(Ku^k)$ and $R_{\alpha^k}(u^k)$.

In case $\beta>0$, we can select $\hat{u}$ to be a solution to \eqref{eq:converged_problem} and by Theorem  \ref{thm:l.s.c} and convergence of $S_{f^k}$ to $S_f$ estimate according to
\begin{equation} \label{eq:convergence_central_boundedness_beta_neq_0}
 S_{f^k}(Ku^k) + \beta_k R_{\alpha^k} (u^k) \leq S_{f^k}(K\hat{u}) + \beta_k R_{\alpha^k}(\hat{u})  \rightarrow S_{f}(K\hat{u}) + \beta \hat{R}_{\alpha}(\hat{u}) \ \text{ as } \ k \rightarrow \infty.
 \end{equation}
In particular, also in this case, both  $S_{f^k}(Ku^k)$ and $R_{\alpha^k}(u^k)$ are bounded. Choosing $Z$ as a complement of $\ker(K) \cap L$ in $L$, such that the projection $P_Z:L \rightarrow Z$ satisfies $\rg(\Id - P_Z) = \ker(K) \cap L$,
 and $\tilde{u}^k := u^k - P_{L}u^k + P_{Z}P_{L} u^k$, we observe that $\tilde{u}^k - u^k \in \ker(K) \cap L$ and, using equi-coercivity of $(S_{f^k})_k$ and Proposition \ref{prop:coercivity_dependence_on_weights}, we can obtain, as in the proof of Lemma \ref{lem:general_existence}, that $(\tilde{u}^k)_k$ is bounded and hence admits a subsequence weakly converging in $X_{\hat{n}}$.

Now take $u \in X_{\hat{n}} $ to be the limit of a subsequence $(u^{k_i})_i$ of $(u^k)_k$ weakly converging in $X_{\hat{n}}$. %
In case $\beta=0$, using weak lower semi-continuity, convergence of $S_{f^k}$ to $S_f$, and that $S_f(v) = 0$ only if $v=f$, it follows from  \eqref{eq:convergence_central_boundedness_beta_0} and \eqref{eq:convergence_boundedness_R_beta_0} that $Ku=f$ and $\hat{R}_\alpha(u) \leq \hat{R}_\alpha(\hat{u})$ as claimed in \eqref{eq:r_minimizing_solution}, and consequently, also that $\lim_i R_{\alpha^{k_i}}(u^{k_i}) = \hat{R}_{\alpha}(u)$.
 In case $\beta>0$, again using weak lower semi-continuity and convergence of $S_{f^k}$ to $S_f$, it follows from \eqref{eq:convergence_central_boundedness_beta_neq_0} and $\hat{R}_{\alpha}(u) \leq \liminf_i R_{\alpha^{k_i}}(u^{k_i})$
 that $u$ solves \eqref{eq:converged_problem}, and that 
 $ E_{k_i}:= S_{f^{k_i}}(Ku^{k_i}) + \beta_{k_i} R_{\alpha ^{k_i}}(u^{k_i}) 
 \rightarrow E_0:= S_{f}(Ku) + \beta \hat{R}_{\alpha }(u) $. 
 If $\limsup_i R_{\alpha^{k_i}}(u^{k_i}) > \hat{R}_\alpha(u)$, then the estimate
 \[ S_f(Ku) \leq \liminf_i \left( E_{k_i} - \beta_{k_i} R_{\alpha^{k_i}}(u^{k_i}) \right) = E_0 - \limsup_i R_{\alpha^{k_i}}(u^{k_i}) < S_f(Ku)
\] 
yields a contradiction, hence also $\lim_i R_{\alpha^{k_i}}(u^{k_i}) =  \hat{R}_{\alpha}(u)$ and the proof is complete. 
\end{proof}
\begin{rem} Theorem \ref{thm:gammaconvergence} is valid for several particular cases which are worth mentioning:
\begin{itemize}
\item If $\alpha>0$ component-wise, then the above results hold for $\hat{R}_\alpha = R_\alpha$.
\item If we fix $\alpha^k=\alpha$ and have $\beta=0$, this is a classical convergence-for-vanishing-noise result for a fixed regularization functional.
\item Regarding both $\beta$ and $\alpha$ as regularization parameters, this is a rather general convergence result for multi-parameter regularization and we refer to \cite{brediesholler2014,naumova2013multi,ito2011multi} for related work.%
\item If we fix $f^k=f$, this is a stability result for varying the parameters $\alpha,\beta$, which is in particular relevant in the context of bilevel optimization, see Section \ref{sec:bilevel} below.
\item Note that $\alpha^k_e \geq \alpha _e$ was only used in combination with Theorem \ref{thm:l.s.c} to ensure that $\lim_{k\rightarrow \infty} R_{\alpha^k} (u) = \hat R_{\alpha} (u)$. In case $R_\alpha$ is positively one-homogeneous, following Remark \ref{rem:parameter_convergence_pos_hom}, this assumption can be dropped. Also, in case $f^k=f$ and $\beta>0$, the assumption can be dropped in case $S_f$ is continuous in the sense that $\lim_{\lambda \rightarrow 1} S_f(\lambda v) = S_f( v)$ for all $v \in \domain(S_f)$.
\item Again, as described in Remark \ref{rem:well_posed_non_reflexive},
the result can be modified to hold without assuming reflexivity of $X_{\hat{n}}$. 
\end{itemize}
\end{rem}

\section{Bilevel optimization} \label{sec:bilevel}
The goal of this section is to show well-posedness of a bilevel optimization problem for learning the weights $\alpha$ in a regularization graph. In order to allow for an arbitrary removal of different subtrees of the graph via setting $\alpha_e =0$, we will need to include an additional penalty on the edge variables $(w_e)_{e \in E}$. To formulate this, we use the notation 
\[
R_\alpha(u,(w_e)_{ e \in E}):= \sum_{n \in V} \Psi_{n} 
\Big( \Theta_{(n^-,n)} w_{(n^-,n)} - \sum_{(n,m) \in E} \alpha_{(n,m)} \Phi_{(n,m)} w_{(n,m)}
\Big)
\]
where again $w_{(\hat{n}^-,\hat{n})} = u$ and $\Theta_{(\hat{n}^-,\hat{n})} = \Id $. Also, we need an assumption based on the invariant subspaces of regularization graph functionals. To formulate this,
first recall the recursive representation of a regularization graph functional $R_\alpha = R(\Gc_\alpha)$ from Lemma \ref{lem:primal_graph_recursion} as
\begin{align}  \label{eq:rec_bilevel}
  R_\alpha(u) = \inf\Big
  \{ \Psi_{\hat{n}} \big(u-  \sum_{\hat{e} \in \hat{E}}\alpha_{\hat{e}} \Phi_{\hat{e}} w_{\hat{e}} \big) + \sum_{\hat{e} \in \hat{E}} R_{\alpha^{\hat{e}}}^{\hat{e}}(\Theta_{\hat{e}} w_{\hat{e}}) \Bigst {w_{\hat{e}} \in \domain(\Theta_{\hat{e}})} \text{ for all } \hat{e} \in \hat{E}
 \Big\}.
\end{align}
Based on this, for $e \in E$, we henceforth denote $M^e:= \Theta^{-1}_e(L^e)$, where  $\Theta_{e}^{-1}$ is the inverse of $\Theta_{e}:\ker(\Theta_{e})^\perp \rightarrow \rg(\Theta_{e})$ (recall that $\ker(\Theta_{e})^\perp:= \rg (\Id - P_{\ker(\Theta_{e})}) \cap \domain(\Theta_{e})$ with $P_{\ker(\Theta_{e})}$ according to Assumption \ref{ass:coerc_fwd_operator}) and $L^e$ is the invariant subspace of the regularization graph functional $R_{\alpha^e}^e = R(\Gc^e_{\alpha^e})$ with $\Gc^e_{\alpha^e}$ the regularization graph corresponding to the subtree of $G$ starting at edge $e \in E$ with functionals, spaces, operators and weights inherited from $\Gc_\alpha$. Note that $M^e$ is finite dimensional by finite dimensionality of $L^e$ and of $\ker(\Theta_e)$ for every $e\in E$. Finally, we denote the projection $P^e : \domain(\Theta_e) \rightarrow M^e$ as
\begin{equation}\label{eq:Pieta}
 P^{e}w := \Theta_{e}^{-1} P_{\rg(\Theta_{e}) \cap L^{e}} \Theta_{e} w + P^{e}_{\ker(\Theta_{e})} w, 
 \end{equation}
 where $P^{e}_{\ker(\Theta_{e})}$ is a projection onto $\ker(\Theta_{e})$, noting that $P^e$ is indeed a projection.

Using these notations, we now provide a lower semi-continuity result that includes vanishing weights as follows.

\begin{lemma}\label{lem:lsc_graph_weights_edge_variables}
Let $\Gc_\alpha$ be a regularization graph with root node $\hat{n}$ and weights $\alpha \in [0,\infty)^{E}$, $R_\alpha = R(\Gc_\alpha)$ and $(\alpha^k)_k $ be a sequence of weights in $(0,\infty)^{E}$ such that $(\alpha^k)_k \rightarrow \alpha$.

Then, with $(u^k)_k$ weak* converging to some $u \in X_{\hat{n}}$ and $((w^k_e)_{e \in E})_k$ a sequence realizing the minimum in \eqref{defregularizer} with $u^k$ for $u$ and $\alpha^k$ for $\alpha$ such that $(P^e w^k_e)_k$ and $(R_{\alpha^k}(u^k,((w^k_e)_{e})_k))_k$ are bounded, $((w^k_e)_{e \in E})_k$ is bounded and admits a subsequence converging weak* to some $(w_e)_{e \in E}$ such that
\begin{equation}\label{eq:lscweights_joint}
R_{\alpha}(u,(w_e)_e) \leq \liminf_{k\rightarrow \infty} R_{\alpha^k}(u^k,((w^k_e)_{e})_k).
\end{equation}
\end{lemma}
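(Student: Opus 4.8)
The plan is to establish the three assertions in turn: boundedness of $((w^k_e)_{e\in E})_k$, extraction of a weak* convergent subsequence, and finally the lower semi-continuity estimate \eqref{eq:lscweights_joint}. Throughout write $S_k := R_{\alpha^k}(u^k,((w^k_e)_e))$, which is bounded by hypothesis, and abbreviate the node residuals $v^k_n := \Theta_{(n^-,n)}w^k_{(n^-,n)} - \sum_{(n,m)\in E}\alpha^k_{(n,m)}\Phi_{(n,m)}w^k_{(n,m)}$ (with $\Theta_{(\hat n^-,\hat n)}=\Id$ and $w^k_{(\hat n^-,\hat n)}=u^k$), so that $S_k = \sum_{n\in V}\Psi_n(v^k_n)$ and each summand is non-negative.

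For the boundedness I would induct on the height of the subtree rooted at the endpoint of an edge, i.e.\ treat the edges from the leaves towards the root. Fix $e\in E$ with endpoint $n_e$ and suppose $(w^k_{e'})_k$ is already bounded for every edge $e'$ strictly below $n_e$. First I bound $(\Theta_e w^k_e)_k$: since $\Psi_{n_e}(v^k_{n_e})\le S_k$ is bounded, coercivity of $\Psi_{n_e}$ (Assumption \ref{ass:coerc_functional}, via Remark \ref{rem:equivcoercivity}) makes $(v^k_{n_e})_k$ bounded, and then the identity $\Theta_e w^k_e = v^k_{n_e} + \sum_{(n_e,m)\in E}\alpha^k_{(n_e,m)}\Phi_{(n_e,m)}w^k_{(n_e,m)}$ bounds $(\Theta_e w^k_e)_k$, because the child variables are bounded by the induction hypothesis, the $\Phi_{(n_e,m)}$ are continuous (Remark \ref{rem:initialremark}), and $(\alpha^k_e)_k$ converges. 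With $(\Theta_e w^k_e)_k$ bounded, arguing exactly as in the chain \eqref{eq:coercivity_composition} (using the Poincaré estimate \eqref{eq:poincinassum} from Assumption \ref{ass:coerc_fwd_operator} together with the relation $\Theta_e P^e = P_{\rg(\Theta_e)\cap L^e}\Theta_e$ coming from \eqref{eq:Pieta}) gives $\|w^k_e - P^e w^k_e\|_{X_e}\le B_e\|\Theta_e w^k_e - P_{\rg(\Theta_e)\cap L^e}\Theta_e w^k_e\|_{X_{n_e}}$, which is bounded; together with the hypothesis that $(P^e w^k_e)_k$ is bounded this yields boundedness of $(w^k_e)_k$. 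This completes the induction, and Assumption \ref{ass:weak_star_compactness} supplies a non-relabelled subsequence with $w^k_e \weakstar w_e$ for every $e\in E$.

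To pass to the limit, note that each $(\Theta_e w^k_e)_k$ is bounded by the above, so after a further subsequence it converges weak*; weak* closedness of $\Theta_e$ (Assumption \ref{ass:closed_fwd_operator}) then forces $w_e\in\domain(\Theta_e)$ and $\Theta_e w^k_e \weakstar \Theta_e w_e$. Using in addition the weak*-to-weak* continuity of the $\Phi_e$ (Assumption \ref{ass:cont_bkw_operator}), the scalar convergence $\alpha^k_e\to\alpha_e$, and $u^k\weakstar u$ at the root, every residual converges, $v^k_n \weakstar v_n$, where $v_n$ is the corresponding residual built from $(w_e)_e$ and $\alpha$. Weak* lower semi-continuity of each $\Psi_n$ (Assumption \ref{ass:lsc_functional}) gives $\Psi_n(v_n)\le\liminf_k\Psi_n(v^k_n)$, and summing over the finite node set $V$ and using superadditivity of $\liminf$ yields
\[
R_\alpha(u,(w_e)_e)=\sum_{n\in V}\Psi_n(v_n)\le\liminf_k\sum_{n\in V}\Psi_n(v^k_n)=\liminf_k R_{\alpha^k}(u^k,(w^k_e)_e),
\]
which is precisely \eqref{eq:lscweights_joint}.

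The step I expect to be the crux is the boundedness of $(w^k_e)_k$ when some weights vanish in the limit, $\alpha_e\to 0$. Trying to bound $w^k_e$ through the coercivity of the subtree functional $R^e_{(\alpha^k)^e}$ does not work directly, since for positive weights that functional is invariant on the full-support invariant subspace, which is strictly larger than the limit invariant subspace $L^e$ controlled by the hypothesis $(P^e w^k_e)_k$. The device that circumvents this is to bound $\Theta_e w^k_e$ first, directly from the (coercivity-bounded) node residual $v^k_{n_e}$ and the already-bounded child variables, processing edges from the leaves upward; the potentially unbounded directions are then exactly those attenuated by the vanishing weights, hence harmless. Once $\Theta_e w^k_e$ is under control, the remaining pieces are the standard Poincaré estimate and the weak* lower semi-continuity machinery.
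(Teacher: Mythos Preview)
Your proof is correct and takes a genuinely different route from the paper's. The paper argues by top-down induction on the height of the full graph: via the recursive representation (Lemma~\ref{lem:primal_graph_recursion}) it bounds $\|w^k_{\hat e} - P^{\hat e} w^k_{\hat e}\|$ for the root edges $\hat e$ through the subtree coercivity estimate (as in \eqref{eq:theta_rl_coercivity_parameter_convergence_proof}), and then invokes the induction hypothesis on each subtree both for the boundedness of the deeper edge variables and for the lower semi-continuity. You instead run a bottom-up induction, leaves to root: you bound $\Theta_e w^k_e$ directly from the coercivity-bounded residual $v^k_{n_e}$ and the already-bounded child variables, then apply only the raw Poincar\'e estimate (the first half of the chain \eqref{eq:coercivity_composition}) to control $w^k_e - P^e w^k_e$; the lower semi-continuity then follows in one stroke by passing each $v^k_n$ to the limit and summing over the finitely many nodes. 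Your route is more elementary and, as you correctly observe in your closing paragraph, cleanly sidesteps a delicate point in the paper's argument: the subtree coercivity of $R^{\hat e}_{(\alpha^k)^{\hat e}}$ (all weights positive) is naturally modulo the \emph{full-support} invariant subspace, which may be strictly larger than the $\alpha$-based $L^{\hat e}$ used to define the $P^{\hat e}$ appearing in the hypothesis, so that the reference to ``estimating as in \eqref{eq:theta_rl_coercivity_parameter_convergence_proof}'' requires some care. The paper's approach, in turn, has the virtue of reusing the recursive machinery already developed in Sections~\ref{sec:algebraic} and~\ref{sec:analytic_properties}.
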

Note that, in addition to explicitly including the variables $(w_e)_e$, this lower semi-continuity result differs from the one of Theorem \ref{thm:l.s.c} in the fact that in the limit, only the weights change (possibly to zero), but not the original regularization graph.  
This can be achieved thanks to the boundedness assumption on the sequences $(P^e w^k_e)_k$ that does not always hold true as clarified in the following remark.
\begin{rem}
Consider  the regularization graph functional for $\text{TGV}^2$  (see Section \ref{sec:examples}) according to
\begin{align}\label{eq:remassneeded}
R_{\alpha^k}(u^k)
 &= \inf_{\substack{w_1 \in \BV(\Omega), \\ w_2 \in \mM(\Omega, \R^d), \\w_3 \in \BD(\Omega, \R^d)}} 
\mI_{\{0\}}(u^k-w_1) + \mI_{\{0\}}(\nabla w_1 -  w_2- \alpha_k w_3) + \|w_2\|_{\mM} + \|\symgrad w_3\|_{\mM} 
\end{align}
where $\alpha^k \rightarrow 0$ and $u^k = u$ for every $k$ with $\nabla u \in \ker( \symgrad) \setminus \{0\}$. Then, the sequence $(w^k_1,w^k_2,w^k_3)_k = (u, 0, \nabla u/\alpha^k)_k$ realizes the minimum in \eqref{eq:remassneeded} with $R_{\alpha^k}(u^k) = 0$ for every $k$. However, for edge $3$, we have $M^3 = 
\ker(\symgrad)$ and it holds that $\|P_{M^3} (\nabla u/\alpha^k)\|_{L^{d'}(\Omega, \R^d)}  = (\alpha^k)^{-1} \|\nabla u\|_{L^{d'}(\Omega, \R^d)} \rightarrow +\infty$, showing that in this case, the assumptions of Lemma \ref{lem:lsc_graph_weights_edge_variables} do not hold.
\end{rem}

\begin{proof}[Proof of Lemma \ref{lem:lsc_graph_weights_edge_variables}]
Again we proceed by induction, assuming the result holds true for all regularization graphs of height less than $h$ and assume that the height of $\Gc_\alpha$ is $h$. The case $h=0$ is again immediate and we assume $h\geq 1$. 
Writing
\[
R_{\alpha^k}(u^k,(w^k_e)_{e}) = \Psi_{\hat{n}} \bigg(u^k - \sum_{\hat{e}\in \hat{E}} \alpha^k_{\hat{e}} \Phi_{\hat{e}}w^k_{\hat{e}} \bigg) + \sum_{\hat{e} \in \hat{E}} R^{\hat{e}} _{(\alpha^k)^{\hat{e}}} (\Theta _{\hat{e}}w^k_{\hat{e}},(w^k_{e})_{e \in E^{\hat e}} )
\]
with $\hat{E}\subset E$ the set all edges connected to the root node $\hat{n}$, $R_{ (\alpha^k)^{\hat{e}}} = R(\Gc_{(\alpha^k)^{\hat{e}}}^{\hat{e}})$,  and $(\Gc_{(\alpha^k)^{\hat{e}}}^{\hat{e}}) $ regularization graphs of height less than $h$ (see Lemma \ref{lem:primal_graph_recursion})  and with root node $\hat{n}^{\hat{e}}$ we observe, estimating as in \eqref{eq:theta_rl_coercivity_parameter_convergence_proof} and using boundedness of the $(\alpha^k_{\hat{e}})_k$, for generic constants $C,D,\tilde{C}>0$ independent of $k$, that
\[ \|w^k_{\hat{e}} - P^{\hat{e}}w^k _{\hat{e}} \|_{X_{\hat{e}}} \leq C R_{ (\alpha^k)^{\hat{e}}}^{\hat{e}} (\Theta _{\hat{e}}w^k_{\hat{e}})  + D < \tilde C < \infty ,\]
hence boundedness of $(P^e w^k_e)_k$ for every $e \in E$ implies that $(w^k_{\hat{e}})_k$ is bounded for $\hat{e} \in \hat{E}$. Further, again using the coercivity estimates for $ R^{\hat{e}} _{(\alpha^k)^{\hat{e}}}$, the definition of $P^{\hat e}$, the estimate in \eqref{eq:coercivityinductionnoise} and the continuity of $\Theta_{\hat e}P^{\hat e}$ we obtain
\begin{align*}
 \|\Theta_{\hat{e}}  w^k_{\hat{e}} \| _{X_{\hat{n}^{\hat{e}}}} & \leq  \|\Theta_{\hat{e}}  w^k_{\hat{e}} - \Theta_{\hat{e}} P^{\hat{e}} w_{\hat{e}}^k\|_{X_{\hat{n}^{\hat{e}}}} + \|\Theta_{\hat{e}} P^{\hat{e}} w_{\hat{e}}^k \|_{X_{\hat{n}^{\hat{e}}}}\\
& \leq C  R^{\hat{e}}_{(\alpha^k)^{\hat{e}}}(\Theta_{\hat{e}}  w^k_{\hat{e}})  + D + \|\Theta_{\hat{e}} P^{\hat{e}}w_{\hat{e}}^k \|_{X_{\hat{n}^{\hat{e}}}}  \leq \tilde C < +\infty,
\end{align*}
where again $C,D,\tilde{C}>0$ denote generic constants independent of $k$.
Hence, by weak* compactness and weak* closedness of the $\Theta_{\hat e}$ we obtain that that $w^k_{\hat{e}}\overset{*}{\rightharpoonup} w_{\hat{e}} \in \domain(\Theta_{\hat{e}})$ as well as $\Theta_{\hat{e}} w^k_{\hat{e}} \overset{*}{\rightharpoonup} \Theta_{\hat{e}}w_{\hat{e}}$. The induction hypothesis together with the weak* lower semicontinuity of $\Psi_{\hat n}$ and the weak*-to-weak* continuity of $\Phi_{\hat e}$ implies then the result.
\end{proof}

Consider now a regularization graph $\Gc_\alpha$ with root node $\hat{n}$ and let $R_\alpha= R(\Gc_\alpha) : X_{\hat n} \rightarrow [0,\infty]$ be the associated regularization functional. Let $Z$ be a Banach space such that $Z\hookrightarrow X_{\hat{n}} $ and let $H_1$, $H_2$ be two functionals that penalize the weights $\alpha$ and auxiliary variables $(w_e)_{e \in E}$, respectively. We consider the bilevel optimization problem 
\begin{equation} \label{eq:bilevel} 
\begin{aligned}
 \mathop{\inf_{\alpha \in [0,\infty)^{E},\, \beta\in (0,\infty)\  }}_{u_{\alpha,\beta}\in X_{\hat{n}},\, (w^{\alpha,\beta}_e)_{e \in E}} 
 & \|u_{\alpha,\beta} - \hat u\|_Z  + H_1(\alpha) + H_2((w^{\alpha,\beta}_e)_{e \in E}) \\
&\text{s.t. }  (u_{\alpha,\beta}, (w^{\alpha,\beta}_e)_{e \in E}) \in \argmin_{u \in X_{\hat{n}},\, (w_e)_{e \in E}} S_f(Ku) + \beta R_\alpha(u,(w_e)_{e \in E}),
\end{aligned}
\end{equation}
where $\hat{u}$ is some ground truth datum and $f \approx K\hat u$ a corrupted measurement.
\begin{rem}
Note that this single-datum bilevel setting is a generic model problem for learning parameters from a larger training set $(\hat{u}_m,f_m)_m$. Indeed, the single-datum bilevel setting can be extended to a larger training set by simply vectorizing all involved quantities, for instance.
\end{rem}
We now provide an existence result for the bilevel problem, where we use the convention that for $\beta=\infty$, we have $\beta R_\alpha(u,(w_e)_e) = 0$ if $R_\alpha(u,(w_e)_e)=0$ and $\beta R_\alpha(u,(w_e)_e) = \infty$ else, and for which a concrete example and its assumptions are discussed after its proof below. 

In this existence result, regarding the existence of an optimal parameter $\beta$, it is important to note that in \eqref{eq:bilevel}, the parameter $\beta$ is taken in the open interval $(0,\infty)$. This is necessary as otherwise, existence of a solution to the lower level problem cannot be guaranteed. The following theorem takes this into account by allowing the optimal parameter also to attain the value $0$, in which case it states that existence to the lower level problem with $\beta=0$ also holds, see Remark \ref{rem:bilevel_details} for details.

\begin{teo}\label{thm:bilevel_existence} 
Let $Z$ be a Banach space, $\Gc_\alpha$ be a regularization graph with root node $\hat{n}$ and assume that $X_{\hat{n}}$ is reflexive with $Z\hookrightarrow X_{\hat{n}} $.
Further, let $K:X_{\hat{n}} \rightarrow Y$ with $Y$ a Banach space be linear and continuous, $S_f:Y \rightarrow [0,\infty]$ be a proper, convex, lower semi-continuous and coercive and $H_1:[0,\infty)^{E} \rightarrow [0,\infty]$, $H_2:\bigtimes _{e \in E}X_e \rightarrow [0,\infty]$ be proper and strongly and weak* lower semi-continuous functionals, respectively, such that $H_2(0) = 0$.

Let $\hat{u} \in Z$ be a given ground truth variable such that $S_f$ is the discrepancy with respect to a noisy version $f \in Y$ of $K\hat{u}$. Defining $\tilde{\alpha} \in [0,\infty)^E$ by $\tilde{\alpha}_e = \inf \{ \alpha_e \st \alpha \in \domain(H_1)\}$, further
assume that
\begin{enumerate}
\item[i)] $\domain(R_{\tilde{\alpha}})$ is dense in $X_{\hat{n}}$,
\item[ii)] $S_f$ is continuous and $\domain(S_f)$ open,
\item[iii)] $\domain(S_f \circ K) \cap L_{\tilde{\alpha}} \neq \emptyset$, where $L_{\tilde{\alpha}}$ is the invariant subspace of $R_{\tilde{\alpha}}$ as in Remark \ref{rem:domain_invariant_subspace},
\item[iv)] $\|\alpha\| \leq C H_1(\alpha) + D$ for $C,D>0$,
\item[v)] $ \sum_{e \in E} \| P^e w_e \| \leq C H_2( (w_e)_e) + D$ for $C,D>0$ and with $P^e$ as in \eqref{eq:Pieta}.
\end{enumerate}
Then, there exist $\hat{\alpha} \in [0,\infty)^E$, $\hat{\beta}\in [0,\infty]$, $u_{\hat{\alpha},\hat{\beta}}$ and $(w^{\hat \alpha,\hat \beta}_e)_{e \in E}$ such that $(u_{\hat{\alpha},\hat{\beta}},(w^{\hat \alpha,\hat \beta}_e)_{e \in E})$ solves the lower level problem in \eqref{eq:bilevel} with parameters $(\hat{\alpha},\hat{\beta})$ and such that
\begin{equation} \label{eq:bilevel_extended_existence} 
\begin{aligned}
\|u_{\hat{\alpha},\hat{\beta}} - \hat u\|_Z +  H_1(\hat{\alpha}) + H_2((w^{\hat \alpha,\hat \beta}_e)_{e}) = 
 \mathop{\inf_{ \alpha \in [0,\infty)^E,\,\beta\in (0,\infty),\  }}_{u_{\alpha,\beta}\in X,\,(w^{\alpha,\beta}_e)_{e}} 
  \|u_{\alpha,\beta} - \hat u\|_Z  + H_1(\alpha) + H_2((w^{\alpha,\beta}_e)_{e}) \\
\text{s.t. }  (u_{\alpha,\beta},(w^{\alpha,\beta}_e)_{e }) \in \argmin_{u \in X_{\hat{n}},\, (w_e)_{e}} S_f(Ku) + \beta R_{\alpha}(u,(w_e)_{e }).
\end{aligned}
\end{equation}
\end{teo}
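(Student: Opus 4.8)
The plan is to run the direct method on the outer (bilevel) problem, the two genuinely new difficulties being the joint passage to the limit in the lower-level optimality as the parameters $(\alpha^k,\beta^k)$ degenerate, and the treatment of the boundary values $\hat\beta\in\{0,\infty\}$. First I would fix a minimizing sequence $(\alpha^k,\beta^k,u^k,(w^k_e)_e)_k$ for \eqref{eq:bilevel}, where for each $k$ the pair $(u^k,(w^k_e)_e)$ minimizes the lower-level problem with parameters $(\alpha^k,\beta^k)$. Finiteness of the infimum (so that a minimizing sequence is meaningful) would come from assumptions (i)--(iii): a point $u_0\in L_{\tilde\alpha}\cap\domain(S_f\circ K)$, together with edge variables rendering the lower-level energy's regularization part zero, provides an admissible competitor of finite outer cost. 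Boundedness of the outer objective along the minimizing sequence then yields, via (iv), that $(\alpha^k)_k$ is bounded in the finite-dimensional space $[0,\infty)^E$ and hence converges (subsequence) to some $\hat\alpha$; via (v) and weak* lower semicontinuity of $H_2$ that $(\sum_{e}\|P^e w^k_e\|)_k$ is bounded; and that $\|u^k-\hat u\|_Z$ is bounded. Compactifying, $\beta^k\to\hat\beta\in[0,\infty]$ along a further subsequence.

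Next I would extract the limit states. Since $\|u^k-\hat u\|_Z$ is bounded and $Z\hookrightarrow X_{\hat n}$ with $X_{\hat n}$ reflexive, $(u^k)_k$ is bounded in $X_{\hat n}$ and admits a weakly convergent subsequence $u^k\rightharpoonup u$, where the outer term passes to the limit by lower semicontinuity of $v\mapsto\|v-\hat u\|_Z$ with respect to weak convergence in $X_{\hat n}$ (convexity together with closedness of the $Z$-balls in $X_{\hat n}$). The boundedness of $(P^e w^k_e)_k$ is precisely the hypothesis needed to invoke Lemma \ref{lem:lsc_graph_weights_edge_variables}, which provides, up to a subsequence, weak* limits $w^k_e\weakstar w_e$ together with the lower bound
\[
R_{\hat\alpha}(u,(w_e)_e)\le\liminf_k R_{\alpha^k}(u^k,(w^k_e)_e).
\]
Coercivity of the lower-level energy, uniform over the bounded weights by Proposition \ref{prop:coercivity_dependence_on_weights}, guarantees these bounds are genuinely available; where the outer cost controls $u^k$ only modulo $\ker(K)\cap L$ I would shift by this finite-dimensional space as in Lemma \ref{lem:general_existence}.

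The main obstacle is to show that $(u,(w_e)_e)$ solves the lower-level problem for $(\hat\alpha,\hat\beta)$. The $\Gamma$-liminf inequality for the energy follows from the display above, weak lower semicontinuity of $S_f\circ K$ (convexity plus continuity of $K$), and $\beta^k\to\hat\beta$; for $\hat\beta=\infty$ boundedness of the lower-level values forces $R_{\alpha^k}(u^k,(w^k_e)_e)\to0$, hence $R_{\hat\alpha}(u,(w_e)_e)=0$, i.e.\ the constraint. For optimality I must produce, for an arbitrary competitor $(\bar u,(\bar w_e)_e)$ of finite limit energy, a recovery sequence. Plugging in the fixed competitor fails precisely at nodes with $\Psi_n=\mI_{\{0\}}$, where the constraint $\Theta_{(n^-,n)}\bar w_{(n^-,n)}=\sum_{(n,m)}\hat\alpha_{(n,m)}\Phi_{(n,m)}\bar w_{(n,m)}$ would be violated once $\alpha^k_{(n,m)}\neq\hat\alpha_{(n,m)}$. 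I would instead rescale the edge variables top-down through the tree, setting $\bar w^k_{(n,m)}=(\hat\alpha_{(n,m)}/\alpha^k_{(n,m)})\bar w_{(n,m)}$ when $\hat\alpha_{(n,m)}>0$ and $\bar w^k_{(n,m)}=0$ otherwise, exactly as in the substitutions of Lemma \ref{lem:weight_equivalence}; this preserves every zero-constraint exactly and scales the argument at each genuine node by a factor $\rho^k_n\to1$. The $\limsup$ of $R_{\alpha^k}(\bar u,(\bar w^k_e)_e)$ is then controlled by $R_{\hat\alpha}(\bar u,(\bar w_e)_e)$ using convexity of $\Psi_n$ and $\Psi_n(0)=0$ (Remark \ref{rem:initialremark}), with a diagonal $(1-\varepsilon)$ approximation along rays to handle competitors on the boundary of $\domain(\Psi_n)$, while continuity of $S_f$ with open domain (ii) passes the discrepancy term.

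Finally, combining the $\liminf$ bound, the level-$k$ optimality $S_f(Ku^k)+\beta^k R_{\alpha^k}(u^k,(w^k_e)_e)\le S_f(K\bar u)+\beta^k R_{\alpha^k}(\bar u,(\bar w^k_e)_e)$, and the $\limsup$ bound yields optimality of $(u,(w_e)_e)$ for $(\hat\alpha,\hat\beta)$. The degenerate case $\hat\beta=0$ is handled the same way: there $\beta^k R_{\alpha^k}(\bar u,(\bar w^k_e)_e)\to0$ for finite-energy competitors, and density (i) with continuity (ii) upgrades the resulting inequality $S_f(Ku)\le S_f(K\bar u)$ to $u$ minimizing $S_f(K\cdot)$, i.e.\ solvability of the $\beta=0$ lower-level problem (cf.\ Remark \ref{rem:bilevel_details}). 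Lower semicontinuity of the three outer terms (the $Z$-norm, $H_1$ strongly, $H_2$ weak*) then shows that $(\hat\alpha,\hat\beta,u,(w_e)_e)$ attains the bilevel infimum, which is the assertion of \eqref{eq:bilevel_extended_existence}.
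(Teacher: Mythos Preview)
Your overall strategy matches the paper's: direct method on the outer problem, case split on $\hat\beta\in\{0,\infty\}\cup(0,\infty)$, Lemma~\ref{lem:lsc_graph_weights_edge_variables} for the $\liminf$ side, and a rescaled competitor for the $\limsup$ side. Two points need fixing.

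First, Lemma~\ref{lem:lsc_graph_weights_edge_variables} requires not only $(P^e w^k_e)_k$ bounded but also $(R_{\alpha^k}(u^k,(w^k_e)_e))_k$ bounded (and $\alpha^k\in(0,\infty)^E$, which the paper arranges by a subsequence-and-pruning step you omit). This bound is \emph{not} provided by the outer objective; it comes only from comparing the lower-level value against a competitor. You eventually build such a competitor, but you invoke the lemma before the bound is available, so the logical order must be reversed: first produce the competitor and the energy bound, then extract edge-variable limits.

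Second, and more substantively, the case $\hat\beta=0$ cannot be ``handled the same way''. There the comparison yields only $\beta_k R_{\alpha^k}(u^k,(w^k_e)_e)$ bounded, not $R_{\alpha^k}$ itself, so Lemma~\ref{lem:lsc_graph_weights_edge_variables} does not apply and you have no weak* limit $(w_e)_e$ to feed into $H_2$. The paper's fix is to take $(w^{\hat\alpha,0}_e)_e=0$, which trivially solves the $\beta=0$ lower level and gives $H_2(0)=0\le\liminf_k H_2((w^k_e)_e)$; this is precisely where the hypothesis $H_2(0)=0$ enters, and your sketch never uses it. Finally, your recovery-sequence construction for arbitrary $(\bar u,(\bar w_e)_e)$ is essentially a re-derivation of the second half of Theorem~\ref{thm:l.s.c}; the paper shortcuts this by testing only against $\gamma_k z$ for $z\in\domain(R_{\hat\alpha})\cap\domain(S_f\circ K)$, using $R_{\alpha^k}(\gamma_k z)\le\hat R_{\hat\alpha}(z)\le R_{\hat\alpha}(z)$ from Theorem~\ref{thm:l.s.c} and Lemma~\ref{lem:rhat_r_estimate}, which suffices since $R_{\hat\alpha}(z)=\inf_{(w_e)_e}R_{\hat\alpha}(z,(w_e)_e)$.
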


\begin{proof} %
 In case the infimum in the bilevel problem \eqref{eq:bilevel} is infinite, any parameter combination together with a corresponding solution of the lower level problem will be a solution, hence we assume from now on that the infinum in \eqref{eq:bilevel} is finite.
Take $(\alpha^k,\beta_k)_k$ to be a minimizing sequence in $ [0,\infty)^E \times (0,\infty)$ for $\eqref{eq:bilevel}$ with 
$(u^k)_k = (u_{\alpha^k,\beta_k})_k$ and $((w_e^k)_e)_k = ((w^{\alpha^k,\beta_k}_e)_e)_k$
 corresponding sequences of solutions to the lower level problem. Then, obviously $(u^k)_k$ is bounded in
 $Z$ and by $\|\cdot \|_{X_{\hat{n}}} \leq C \|\cdot \|_{Z}$ we obtain a (non-relabeled) subsequence weakly converging to some $u$ in $X_{\hat{n}}$. By the coercivity of $H_1$ (hypothesis $iv)$) we can also assume that, up to a subsequence, $\alpha^k \rightarrow 
 \hat{\alpha} \in \domain(H_1)$. By possibly considering another (non-relabeled) subsequence, we can further achieve that, for each $e \in E$, either $\alpha^k_e>0$ for all $k$ or $\alpha^k _e = 0$ for all $k$. Noting that in the latter case we can remove the subgraphs of $G=(V,E)$ after $e \in E$ with $\alpha^k_e = 0$ for all $k$ without changing the value of $R_\alpha$ or any of the $R_{\alpha^k}$, we can further assume that $\alpha^k_e>0$ for all $k$ and $e \in E$. 
 
At first assume that there exists a subsequence of $(\beta_k)_k$ converging to zero. Then, moving to this subsequence, we obtain for any $z \in  \domain(R_{\tilde{\alpha}}) \subset \domain(R_{\hat{\alpha}}) $ (where $\domain(R_{\tilde{\alpha}}) \subset \domain(R_{\hat{\alpha}}) $ follows from Lemma \ref{lem:weight_equivalence} and the definition of $\tilde{\alpha}$) and $z \in \domain(S_f \circ K)$ that 
\begin{align*}
S_f(Ku) 
&\leq \liminf _k S_f(Ku^k) + \beta_k R_{\alpha^k } (u^k) \leq\liminf _k  S_f(\gamma_k Kz) + \beta_k R_{\alpha^k }(\gamma_k z) \\
& \leq \liminf _k  S_f(\gamma_k Kz) + \beta_k R_{\hat{\alpha} }(z) =  S_f(Kz),
\end{align*}
where we have used that $R_{\alpha^k}(\gamma_k z) \leq  \hat{R}_{\hat{\alpha}}(z) \leq R_{\hat{\alpha}}(z)$ for $\gamma_k $ according to \eqref{eq:gamma_definition} by Theorem \ref{thm:l.s.c} and Lemma \ref{lem:rhat_r_estimate}, and that $S_f$ is continuous on $\domain(S_f)$ with $\domain(S_f)$ open (hypothesis $ii)$).
Density of $\domain(R_{\tilde{\alpha}})$ and continuity of $S_f$ then implies that $(u,(0)_{e \in E})$ is a solution to the lower level problem in \eqref{eq:bilevel_extended_existence} for $\hat{\beta}=0$. Lower semi-continuity of $\|\cdot \|_Z$ and $H_1$, and the fact that $0 = H_2((0)_{e \in E}) \leq \liminf_k H_2 ((w_e^k)_e)$ then yields the claimed optimality of $(\hat{\alpha},\hat{\beta})$ with $\hat{\beta}=0$.

Assume now that $(\beta_k)_k$ is unbounded such that, again by using a non-relabeled subsequence, we can assume that $\beta_k\rightarrow \infty$. Optimality and the estimate \eqref{eq:convergenceweights} then give for any $z \in  \domain(S_f \circ K)$ with $R_{\hat{\alpha}}(z) = 0$ (such a $z$ exists by hypothesis $iii)$ since $L_{\tilde{\alpha}} \subset L_{\hat{\alpha}}$, with $L_{\tilde{\alpha}}$ and $L_{\hat{\alpha}}$ being the invariant subspaces of $R_{\tilde{\alpha}}$ and $R_{\hat{\alpha}}$, respectively) that
\[ S_f(Ku^k) + \beta_k R_{\alpha^k } (u^k,(w_e^k)_e) \leq S_f(\gamma_k Kz) \rightarrow S_f(Kz)  < \infty.
\]
This implies in particular that $(R_{\alpha^k } (u^k,(w_e^k)_e))_k$ is bounded such that, using that $(P^e w_e^k)_k$ is bounded for each $e \in E$ due to coercivity of $H_2$ as in assumption $v)$, by Lemma \ref{lem:lsc_graph_weights_edge_variables}, the sequence $((w_e^k)_e)_k$ admits a subsequence weak* converging to some $(w_e)_e$. Weak* lower semi-continuity then yields
\[ R_{ \hat \alpha } (u,(w_e)_e) \leq \liminf_k R_{\alpha^k } (u^k,(w_e^k)_e) = 0 .\]
Also, from weak lower semi-continuity of $S_f \circ K$, we obtain that
$ S_f(Ku) \leq S_f(Kz) $ and, consequently, that $(u,(w_e)_e)$ solves the lower level problem in \eqref{eq:bilevel_extended_existence} for $(\hat{\alpha},\hat{\beta})$ with $\hat{\beta}=\infty$.
Lower semi-continuity of $\|\cdot \|_Z$ and  $H_1$, and weak* lower semi-continuity of $H_2$  finally implies that $(\hat{\alpha},\hat{\beta})$ is optimal as claimed.

At last assume that, again up to a non-relabeled subsequence, $\beta_k \rightarrow \hat \beta \in (0,\infty)$. 
Then, we get for any $z \in \domain(R_{\hat{\alpha}}) \cap \domain(S_f\circ K)$ (which again exists by hypothesis $iii)$ since $L_{\tilde{\alpha}} \subset \domain(R_{\tilde{\alpha}}) \subset \domain(R_{\hat{\alpha}}) $)  that 
\begin{align*}
 \liminf _k S_f(Ku^k) + \beta_k R_{\alpha^k } (u^k,(w_e^k)_e) \leq\liminf _k S_f(\gamma_k Kz) + \beta_k R_{\alpha^k }(\gamma_k z)   \leq S_f(Kz) +  \beta R_{\hat{\alpha} }(z),
\end{align*}
such that again, $(R_{\alpha^k } (u^k,(w_e^k)_e))_k$ and $(P^e w_e^k)_k$ are bounded and by Lemma \ref{lem:lsc_graph_weights_edge_variables}, we can assume that $((w_e^k)_e)_k$ admits a subsequence weak* converging to some $(w_e)_e$. Lower semi-continuity then yields  
\[
 S_f(Ku) + \beta R_{\hat \alpha } (u,(w_e)_e)  \leq S_f(Kz) +  \beta R_{\hat{\alpha} }(z),
 \]
which shows that $(u,(w_e)_e)$ solves the lower level problem in \eqref{eq:bilevel_extended_existence}. Finally, again lower semi-continuity of $\|\cdot \|_Z$, $H_1$ and weak* lower semi-continuity of $H_2$ imply optimality of $(\hat{\alpha},\hat{\beta})$ as claimed.
\end{proof}
Before discussing the assumptions and results of Theorem \ref{thm:bilevel_existence} in detail, we provide an example.

\medskip

\noindent \textbf{$\TGV^2$-shearlet infimal convolution}. With the notation of Section \ref{sec:examples} we can define a regularization graph $\Gc_\alpha$ to be the one in Section \ref{sec:examples} such that  $R_\alpha=R(\Gc_\alpha):L^2(\Omega) \rightarrow [0,\infty]$ with $\Omega \subset \R^2$ a bounded Lipschitz domain is given as
\[ R_\alpha(u) = \inf_{\substack{ w_1 \in \BD(\Omega), \\ w_2 \in L^2(\R^2)}} \|\nabla (u-\alpha_0  r_\Omega w_2) - \alpha_1 w_1\|_\M + \|\symgrad w_1 \|_\M  + \|\mathcal{SH}w_2\|_1,
\]
where $\alpha \in [0,\infty)^2$ is chosen accordingly. 
Then, for $K :L^2(\Omega) \rightarrow Y$ linear and continuous and some Banach space $Y$, we can consider the bilevel problem 
\begin{equation} \label{eq:bilevel_example}
\begin{aligned}
 \inf_{ \substack{ \beta \in (0,\infty), \\ (\alpha_0,\alpha_1) \in [0,c]^2}} & \|u_{\alpha,\beta} - \hat{u}\|_{L_2(\Omega)} + \mI_{[0,d]} \left(\left\|P_{\ker(\symgrad)} w_1^{\alpha,\beta}\right\|_{L^2(\Omega)}\right)  \\ 
 &  \text{s.t. } (u_{\alpha,\beta},w_1^{\alpha,\beta},w_2^{\alpha,\beta}) \in \argmin _{u,w_1,w_2} \frac{1}{2}\|Ku - f\|_Y ^2 +\beta R_\alpha(u,w_1,w_2),
\end{aligned}
\end{equation}
where $c,d>0$ and $P_{\ker(\symgrad)}:\BD(\Omega) \rightarrow \ker(\symgrad)$ is a projection to the finite dimensional space $\ker(\symgrad) = \{ x \mapsto Ax+b \st A \in \R^{2 \times 2}, b \in \R^2, A^T = -A \}$.
Then, with $e_0$ and $e_1$ being the edges such that $\alpha_{e_0} = \alpha_0$ and $\alpha_{e_1} = \alpha_1$, respectively,
 $H_1(\alpha) = \Ic_{[0,c]^2}((\alpha_{e_0},\alpha_{e_1})) + \Ic_{\{\alpha \st \alpha_e = 1 \text{ for } e \notin \{e_0,e_1\}\}}(\alpha)$ and  $H_2(w_1,w_2) = \mI_{[0,d]}  (\|P_{\ker(\symgrad)} w_1\|_{L^2(\Omega)})$, we have with $\tilde{\alpha}_e = \inf\{\alpha_e \st$ $ \alpha \in \domain(H_1)\}$ that $R_{\tilde{\alpha}}  = \TV $. Hence, $\domain(R_{\tilde{\alpha}}) \supset C_{c}^\infty(\Omega)$ is dense in $X_{\hat{n}} = L^2(\Omega)$ and all assumptions of Theorem \ref{thm:bilevel_existence} hold.

Consequently there exist parameters $(\hat{\alpha},\hat{\beta})$ which are optimal for \eqref{eq:bilevel_example} as stated in Theorem \ref{thm:bilevel_existence}. Denoting by $\infconv$ the infimal convolution operator, we observe for the corresponding parameters $\hat{\alpha}_0$, $\hat{\alpha}_1$ that:
\begin{itemize}
\item If $\hat \alpha_0=0, \hat\alpha_1 >0$, then $R_{\hat{\alpha}} = \TGV^2_{(1,1/\hat{\alpha}_1)}$.
\item If $\hat\alpha_0=\hat\alpha_1 = 0$, then $R_{\hat{\alpha}} = \TV$.
\item If $\hat\alpha_1=0,\hat\alpha_0>0$, then $R_{\hat{\alpha}} = \TV \infconv (\|\cdot\|_1 \circ \mathcal{SH}) $.
\item If $\hat\alpha_1>0,\hat\alpha_0>0$, then $R_{\hat{\alpha}} = \TGV^2_{(1,1/\hat{\alpha}_1)} \infconv (\|\cdot\|_1 \circ \mathcal{SH}) $.
\end{itemize}
Thus, the model is able to learn different functionals by modifying the graph accordingly. This extends directly, e.g., to learning the order of $\TGV$ or the infimal convolution of $\TGV$ with other regularization functionals. The term $\mI_{[0,d]} \left(\|P_{\ker(\symgrad)} w_1^{\alpha,\beta}\|_{L^2(\Omega)}\right)$ puts a constraint on the norm of the projection of the auxiliary variable $w_1^{\alpha,\beta}$ to $\ker(\symgrad)$. Avoiding such a term is also possible, but would lead to different limit functionals in case of vanishing $\alpha$: Without a bound on the elements of $\ker(\symgrad)$, the limit graph in case $\hat \alpha_0=\hat \alpha_1=0 $ would in this example for instance be
\[\hat{R}_{\hat{\alpha}}(u) = \inf_{w \in \ker(\symgrad)} \| \nabla u - w\|_\M\]
instead of $R_{\hat{\alpha}} (u) = \|\nabla u \|_\M$. Hence, in case of using the infimal convolution of functionals with non-trivial invariant subspace, the limit functional still allows to subtract an arbitrary element of this subspace.

\begin{rem} \label{rem:bilevel_details}
We now discuss necessity of the additional density and continuity assumptions of the theorem and the obtained result in more detail.
\begin{itemize}
\item If $\hat{\beta}=0$, the theorem states that $u_{\hat{\alpha},\hat{\beta}}$ is a solution to
\[ \min_{u \in X_{\hat{n}}} S_f(Ku) ,\]
and in particular that a best approximation of the noisy data exists. Note that this is not true in general. In a classical Hilbert space setting with $S_f(v) = \|u-f\|_2 ^2$ for instance, existence of a best approximation for every $f \in Y$ is in fact equivalent to $K$ having closed range \cite{Engl96}.
\item If $\hat{\beta}=\infty$, we see that $u_{\hat{\alpha},\hat{\beta}}$  solves
\[ \min_{u  \in X_{\hat{n}}} S_f(Ku) \quad \text{subject to }R_{\hat{\alpha}}(u) =0.\]
Here,
it can be shown as in Theorem \ref{thm:general_reg_existence_linear} that solution always exists and we could have alternatively used $\beta \in (0,\infty]$ in the bilevel problem \eqref{eq:bilevel}.
\item Density of $\domain(R_{\tilde{\alpha}})$ is only required in case $\hat{\beta}=0$ to ensure optimality over the entire space instead of $\domain(R_{\hat{\alpha}})$. 
In particular, this assumption can be dropped by bounding the admissible $\beta$ away from zero.
\item The assumption $ S_f$ being continuous and $\domain(S_f)$ open is always fulfilled if, for instance, $S_f(u) = \|u-f\|_Y^q$. It can be replaced by the weaker assumption that $S_f(\gamma ^k Kz) \rightarrow S_f(Kz)$ for all $z \in \domain(S_f \circ K)$ and $\gamma^k \in (0,1]$ converging to $1$ by either bounding $\beta$ away from zero or reducing the optimality of $u_{\hat{\alpha},\hat{\beta}}$ in case $\hat{\beta} = 0$ to optimality with respect to all functions in $\domain(R_{\hat{\alpha}})$ instead of the entire space.
\item The assumption $\domain(S_f \circ K) \cap L_{\tilde{\alpha}} \neq \emptyset$ is always true if, for instance, $0 \in \domain (S_f)$. It can be weakened to $\domain(S_f \circ K) \cap  \domain(R_{\tilde{\alpha}}) \neq \emptyset$ if the set of admissible $\beta$ is bounded above.
\item Typical examples for $H_1$ fulfilling the assumption of Theorem \ref{thm:bilevel_existence} would be $H_1 $ that constrains $\alpha_e  \in [0,c]$ for all $e \in E_0$ or penalizes $\sum_{e \in E_0}|\alpha_e|$, with some $ E_0 \subset E$, and fixes $\alpha_e = 1$ for all remaining $e \in E \setminus E_0$. Here, a penalization of $\sum_{e \in E_0}|\alpha_e|$ is expected to promote sparsity of $\alpha$ and hence, a reduced complexity of the optimal regularization graph. The purpose of the constraints $\alpha_e = 1$ for $e \in E \setminus E_0$ is to avoid overparametrization, i.e., the usage of unnecessary parameters. This happens, for instance, in case of splitting nodes, i.e., if $\Psi_n = \mathcal{I}_{\{0\}}$ for some $n$. Further, the constraint $\alpha_e = 1$ for $e \in E \setminus E_0$ can be used to avoid $R_{\tilde{\alpha}} = \mathcal{I}_{\{0\}}$, which is the case if all weights are set to zero and $\Psi_{\hat n} =\mathcal{I}_{\{0\}}$ (see Definition \ref{def:reg_graph}). 

\item The coercivity of $H_2$ is only required on the finite dimensional spaces $M_e$ for all  $e \in E$, and is used to %
allow for the bilevel framework to cut edges of the graph by setting weights to zero. Without this assumption, a similar existence result with $R_\alpha$ being replaced by $\hat{R}_\alpha$ can be obtained.

\end{itemize}
\end{rem}

\section{Conclusions}

In this work, we have introduced \emph{regularization graphs} as a flexible framework for designing regularization functionals for the variational regularization of inverse problems. The proposed framework thoroughly covers existing regularization approaches and allows to define new ones in a simple and constructive way, essentially by drawing corresponding regularization graphs. We have provided a comprehensive analysis of the class of functionals derived from regularization graphs, which in particular includes well-posedness and convergence results for applying this class of functionals in a general inverse problems setting. Furthermore, we have developed and analyzed a bilevel optimization approach that allows to learn an optimal structure and complexity of a regularization graph, and hence of the corresponding regularization functional, from training data.

Future goals are to develop an equally flexible numerical framework for the application of regulariazation graphs to general inverse problems, as well as the numerical realization of the proposed bilevel approach.

\appendix

\section{Appendix}

Here we provide a list extending the examples of Section \ref{sec:examples}, that outlines the representation of different, existing regularization functionals as regularization graphs. Note that, as discussed in Section \ref{sec:algebraic}, also a finite combination of any of those functionals via summation or infimal convolution can again be represented as regularization graph.

\medskip 

\small

\noindent \begin{minipage}{0.45\linewidth}
\noindent \textbf{$\TV-L^q$ infimal convolution \cite{burger2015infimal_infty, burger2016infimal_finite_p}.}
\begin{align*}
 R_\alpha(u) 
  =&  \mathop{\inf_{w_1 \in \BV(\Omega),\, w_2 \in \mathcal{M}(\Omega, \R^d),}}_{w_3\in L^q(\Omega, \R^d) } \mI_{\{0\}}(u- w_1) \\
  & + \mI_{\{0\}}(\nabla w_1  - w_2 - \alpha w_3) + \|w_2\|_{\mM} + \|w_3\|_{L^q}\\
  =& \mathop{\inf_{w\in L^q(\Omega, \R^d) }}\|\nabla u - \alpha w\|_{\mM} + \|w\|_{L^q}
\end{align*}
with $1 < p \leq d'$ and $q \in (1,\infty)$.
\end{minipage}
\small
\hspace*{-0.5cm}\begin{minipage}{0.69\linewidth}
\begin{tikzpicture}[rgraph]

\draw (0,0) pic (p1) {node={$L^p(\Omega)$,$\mI_{\{0\}}$}};
\draw (0.8,0) pic (p2) {node={$\mM(\Omega\comma\R^d)$}};

\draw (1.7,0) pic (p3) {node={$\mM(\Omega\comma\R^d)$,$\|\cdot \|$}};
\draw (1.7,-1) pic (p4) {node={$L^q(\Omega\comma\R^d)$,$\|\cdot \|$}};

\draw[edge] (p1) -- pic{edge={$\emb$,$L^{d'}(\Omega)$,$\nabla$}} (p2);
\draw[edge] (p2) -- pic{edge={$\Id$,$\mM(\Omega\comma\R^d)$,$\Id$}} (p3);
\draw[edge] (p2) -- (0.8,-1) -- pic{edge={$\alpha \emb$,$L^{q}(\Omega\comma\R^d)$,$\Id$}} (p4);
\end{tikzpicture}
\end{minipage}

\medskip
\noindent\rule{\linewidth }{0.4pt}
\medskip

\begin{minipage}{0.65\linewidth}
\noindent \textbf{Infimal convolution of spatio-temporal TV \cite{hollerkunisch2014,holler17ictgvmri_mh}}.
\begin{align*}
R_\alpha(u) & =  \inf_{w_1,w_2 \in \BV(\Omega)} \mI_{\{0\}}(u-w_1-\alpha w_2) + \|\nabla w_1\|_{\mM, \beta_1} + \|\nabla w_2\|_{\mM, \beta_2}\\
& = \inf_{w\in \BV(\Omega)} \|\nabla u - \alpha \nabla w \|_{\mM, \beta_1} + \|\nabla w\|_{\mM, \beta_2}
\end{align*}
where $1 < p \leq d'$  and $\|\cdot \|_{\mM, \beta_i} = \|\cdot \|_{\beta_i}$ are anisotropic norms.
\end{minipage}
\begin{minipage}{0.39\linewidth}
\begin{tikzpicture}[rgraph]
\draw (0,0) pic (p1) {node={$L^p(\Omega)$,$\mI_{\{0\}}$}};
\draw (1,0) pic (p2) {node={$\mM(\Omega\comma\R^d)$,$\|\cdot \|_{\beta_1}$}};

\draw (1,-1) pic (p3) {node={$\mM(\Omega\comma \R^d)$,$\|\cdot \|_{\beta_2}$}};

\draw[edge] (p1) -- pic{edge={$\emb$,$L^{d'}(\Omega)$,$\nabla$}} (p2);
\draw[edge] (p1) -- (0,-1) --  pic{edge={$\alpha \emb$,$L^{d'}(\Omega)$,$\nabla$}} (p3);
\end{tikzpicture}
\end{minipage}

\medskip
\noindent\rule{\linewidth }{0.4pt}
\medskip

\begin{minipage}{0.65\linewidth}
\noindent \textbf{Sum of convex functions of $\TV$ and $\TV^2$ \cite{papafitsoros2014firstandsecondorder_mh}}.
\begin{align*}
R_\alpha(u)  
= &  \inf_{w \in \BV^2(\Omega)} \mI_{\{0\}}(u-w) + \Psi_{f,g}\big((\nabla w,\nabla^2w)\big) \\
= &   \|\nabla u\|_{f} +\|\nabla^2 u\|_{g} 
\end{align*}
\end{minipage}
\begin{minipage}{0.39\linewidth}
\vspace*{-0cm}
\begin{tikzpicture}[rgraph]
\draw (0,0) pic (p1) {node={$L^p(\Omega)$,$ \mI_{\{0\}}$}};
\draw (1,0) pic (p2) {node={$\mM(\ldots)  \times \mM(\ldots) $,$\Psi_{f,g}$}};

\draw[edge] (p1) -- pic{edge={$\emb$,$L^{d'}(\Omega)$,$(\nabla\comma\nabla^2)$}} (p2);

\end{tikzpicture}
\end{minipage}
\vspace{0.1cm}

where $1 < p \leq d'$, $\Psi_{f,g}(z_1,z_2)  = \|f ( z_1)\|_\M+\|g (z_2)\|_\M$ with $\|f(\cdot) \|_\M$, $\|g(\cdot )\|_\M$ appropriate convex functions of measures generalizing $\|\cdot \|_\M$ such that coercivity and weak* lower semicontinuity holds.

\medskip
\noindent\rule{\linewidth }{0.4pt}
\medskip

\begin{minipage}{0.46\linewidth}
\noindent \textbf{General second-order model \cite{brinkmann2019unified,chan2010}.}
\begin{align*}
R_\alpha(u) 
 =&  \inf_{\substack{ w_1 \in \BV(\Omega),\,w_2 \in \M(\Omega, \R^d), \\  w_3 \in \BV(\Omega, \R^d)} }\mI_{\{0\}}(u - w_1)\\
 & + \mI_{\{0\}}( \nabla  w_1 -w_2 - \alpha w_3) +  \|w_2\|_\mathcal{M} + \|\mathcal{A}\nabla w_3\|_\mathcal{M} \\ 
 = &\inf_{w\in\BV(\Omega, \R^d)}   \|\nabla u - w\|_\mathcal{M} + \frac{1}{\alpha}\|\mathcal{A}\nabla w\|_\mathcal{M} 
\end{align*}
\end{minipage}
\begin{minipage}{0.6\linewidth}
\hspace*{-1cm}\begin{tikzpicture}[rgraph]

\draw (0,0) pic (p1) {node={$L^p(\Omega)$,$\mI_{\{0\}}$}};
\draw (0.8,0) pic (p2) {node={$\mM(\Omega\comma\R^d)$}};

\draw (1.7,0) pic (p3) {node={$\mM(\Omega\comma\R^d)$,$\|\cdot \|$}};
\draw (1.7,-1) pic (p4) {node={$\mM(\Omega\comma\R^m)$,$\|\cdot \|$}};

\draw[edge] (p1) -- pic{edge={$\emb$,$L^{d'}(\Omega)$,$\nabla$}} (p2);
\draw[edge] (p2) -- pic{edge={$\Id$,$\mM(\Omega\comma\R^d)$,$\Id$}} (p3);
\draw[edge] (p2) -- (0.8,-1) -- pic{edge={$\alpha\Id$,$\mM(\Omega\comma\R^d)$,$\mathcal{A}\nabla$}} (p4);
\end{tikzpicture}
\end{minipage}
\vspace{0.1cm}

where $1 < p \leq d'$, $m \in \N$ and the linear operator $\mathcal{A}:\R^{d \times d} \rightarrow \R^m$ is defined pointwise on $\nabla w$ such that suitable lower semicontinuity and coercivity assumptions hold.

\medskip
\noindent\rule{\linewidth }{0.4pt}
\medskip

\begin{minipage}{0.65\linewidth}
\noindent \textbf{Infimal convolution  of tight frames \cite{Kutyniok13_mh}}.
\begin{align*}
R_\alpha(u) & =  \inf_{w_1,w_2 \in L^2(\Omega)} \mI_{\{0\}}(u-w_1-\alpha w_2) + \|\Phi_1^* w_1 \|_{1} + \| \Phi_2^* w_2 \|_{1}\\
& = \inf_{u = w_1 + w_2}  \|\Phi_1^*  w_1\|_{1} + \frac{1}{\alpha}\| \Phi_2^* w_2 \|_{1}
\end{align*}
where $\Phi^*_i$ are associated with tight frames such as curvelets or Gabor frames \cite{Kutyniok13_mh} and $\|\cdot \|_{1}$ is the extension to $+\infty$ of the $\ell^1$-norm to $\ell ^2$.
\end{minipage}
\begin{minipage}{0.34\linewidth}
\begin{tikzpicture}[rgraph]
\draw (0,0) pic (p1) {node={$L^2(\Omega)$,$ \mI_{\{0\}}$}};
\draw (1,0) pic (p2) {node={$\ell^2$,$\|\cdot \|_{1}$}};

\draw (1,-1) pic (p3) {node={$\ell^2$,$\|\cdot \|_{1}$}};

\draw[edge] (p1) -- pic{edge={$\Id$,$L^2(\Omega)$,$\Phi_1^*$}} (p2);
\draw[edge] (p1) -- (0,-1) --  pic{edge={$\alpha\Id$,$L^2(\Omega)$,$\Phi_2^*$}} (p3);
\end{tikzpicture}
\end{minipage}

\medskip
\noindent\rule{\linewidth }{0.4pt}
\medskip

\begin{minipage}{0.44\linewidth}
\noindent \textbf{$\TV_{pwL}$ regularization \cite{burger2019total}}.
\begin{align*}
R_\alpha(u) 
 = & \inf_{\substack{ w_1 \in \BV(\Omega), \\  w_2,w_3 \in \mathcal{M}(\Omega, \R^d)} } \mI_{\{0\}}(u-w_1) \\
 &  + \mI_{\{0\}}( \nabla  w_1- w_2 - \alpha w_3) +  \|w_2\|_\mathcal{M} +\mI_{\{|\cdot | \leq \gamma\}}(w_3) \\ 
 =&  \inf_{w\in\mathcal{M}(\Omega, \R^d)}   \|\nabla u - \alpha w\|_\mathcal{M} \qquad \text{s.t.} \qquad  |w| \leq \gamma
\end{align*}
\end{minipage}
\begin{minipage}{0.65\linewidth}%
\hspace*{-1.2cm}\begin{tikzpicture}[rgraph]

\draw (0,0) pic (p1) {node={$L^p(\Omega)$,$ \mI_{\{0\}}$}};
\draw (0.8,0) pic (p2) {node={$\mM(\Omega\comma\R^d)$}};

\draw (1.7,0) pic (p3) {node={$\mM(\Omega\comma\R^d)$,$\|\cdot \|$}};
\draw (1.7,-1) pic (p4) {node={$\mM(\Omega\comma\R^d)$,$\mI_{|\cdot | \leq \gamma}$}};

\draw[edge] (p1) -- pic{edge={$\emb$,$L^{d'}(\Omega)$,$\nabla$}} (p2);
\draw[edge] (p2) -- pic{edge={$\Id$,$\mM(\Omega\comma\R^d)$,$\Id$}} (p3);
\draw[edge] (p2) -- (0.8,-1) -- pic{edge={$\alpha\Id$,$\mM(\Omega\comma\R^d)$,$\Id$}} (p4);
\end{tikzpicture}
\end{minipage}
\vspace*{0.1cm}

where $|w| \in \M^+(\Omega)$ is the variation of the measure $w \in \M(\Omega,\R^d)$, $\gamma \in \M^+(\Omega)$ is a given positive measure and $|w| \leq \gamma$ means that $\gamma  - |w|$ is a positive measure.

\bigskip

\normalsize

For the sake of completeness, we also provide the proof of the equivalence of a coercivity- and and closed-range assertion for the operators considered in this paper.
\begin{lemma}\label{lem:equivalence_coercivity_closed_range}

 Let $\Theta:\domain(\Theta) \subset X_e \rightarrow X_m$ be a linear operator between Banach spaces $X_e$ and $X_m$ that both admit a predual space and such that bounded sequences in $X_e$ admit weak* convergent subsequences. Further, assume that $\Theta$ is weak* closed and has finite dimensional kernel. Then, there exists $C>0$ and $P_{\ker(\Theta)}:X_e  \rightarrow\ker(\Theta)$ a linear, continuous projection such that
\[ \|w - P_{\ker(\Theta)}w \|_{X_e} \leq C \|\Theta w \|_{X_m} \]
for all $w \in \domain(\Theta)$ if and only if $\Theta $ has closed range.
\begin{proof}
Assuming that the coercivity assertion holds, the closedness of $\rg(\Theta)$ can be proven directly using the weak* closedness of $\Theta$. On other hand, if $\rg(\Theta)$ is closed, then from  \cite[Remark 2.18]{brezisfunctionalanalysis} we  deduce that there exists $\tilde C>0$ such that 
\begin{equation}
\inf_{z \in \ker(\Theta)} \|w  - z\|_{X_e} \leq \tilde C \|\Theta w\|_{X_m} \quad \forall w \in \domain(\Theta).
\end{equation}
In particular, due to finite dimensionality of $\ker(\Theta)$, we deduce the existence of a map $G  : \domain(\Theta) \rightarrow \ker(\Theta)$ such that
\begin{equation}
 \|w  - G(w) \|_{X_e} \leq \tilde C \|\Theta w\|_{X_m} \quad \forall w \in \domain(\Theta).
\end{equation}
Defining now 
\begin{equation}
R(w) := \left\{\begin{array}{ll}
 \|\Theta w\|_{X_m} & w \in \domain(\Theta), \\
 + \infty & \text{otherwise,}
\end{array}
\right.
\end{equation}
and arbitrarily extending $G$ outside $\domain(\Theta)$ to a function $G: X_e \rightarrow \ker(\Theta)$ we obtain  that 
\begin{equation}
 \|w  - G(w) \|_{X_e} \leq \tilde C R(w) \quad \forall w \in X_e\,.
\end{equation}
Finally, applying Lemma \ref{lem:projection_transfer} with $D = 0$ (which yields $\tilde{D}=0$) and $K = X_e$, we obtain the existence of a bounded, linear projection $P_{\ker(\Theta)}$ and a constant $C>0$ such that the claimed coercivity holds.
\end{proof}
\end{lemma}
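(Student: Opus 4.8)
The plan is to establish the claimed equivalence by proving the two implications separately, with the coercivity estimate on one side and closedness of $\rg(\Theta)$ on the other.

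For the direction ``coercivity $\Rightarrow$ closed range'', I would argue by a direct limiting argument exploiting the weak* compactness of bounded sequences (assumption \ref{ass:weak_star_compactness}) and the weak* closedness of $\Theta$ (assumption \ref{ass:closed_fwd_operator}). Concretely, take $y$ in the norm-closure of $\rg(\Theta)$, so that there is a sequence $(w^k)_k$ in $\domain(\Theta)$ with $\Theta w^k \to y$ in $X_m$. Replacing $w^k$ by $\hat{w}^k := w^k - P_{\ker(\Theta)}w^k$ leaves $\Theta\hat{w}^k = \Theta w^k$ unchanged, while the assumed estimate $\|\hat{w}^k\|_{X_e} \leq C\|\Theta w^k\|_{X_m}$ together with the convergence (hence boundedness) of $(\Theta w^k)_k$ shows that $(\hat{w}^k)_k$ is bounded in $X_e$. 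By weak* compactness a subsequence satisfies $\hat{w}^k \weakstar w$ for some $w \in X_e$, and since $\Theta\hat{w}^k \to y$ strongly, hence weak*, the weak* closedness of $\Theta$ yields $w \in \domain(\Theta)$ and $\Theta w = y$, so $y \in \rg(\Theta)$.

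For the converse ``closed range $\Rightarrow$ coercivity'', I would first invoke the standard quotient estimate for operators with closed range \cite[Remark 2.18]{brezisfunctionalanalysis}, which provides $\tilde{C} > 0$ with $\inf_{z \in \ker(\Theta)}\|w - z\|_{X_e} \leq \tilde{C}\|\Theta w\|_{X_m}$ for all $w \in \domain(\Theta)$. Since $\ker(\Theta)$ is finite dimensional, it is proximinal, so the infimum is attained and I can select $G(w) \in \ker(\Theta)$ realizing it, yielding a (generally nonlinear) map $G$ with $\|w - G(w)\|_{X_e} \leq \tilde{C}\|\Theta w\|_{X_m}$. Extending $G$ arbitrarily off $\domain(\Theta)$ and setting $R(w) = \|\Theta w\|_{X_m}$ on $\domain(\Theta)$ and $+\infty$ elsewhere, the hypotheses of Lemma \ref{lem:projection_transfer} are met with $L = \ker(\Theta)$, $D = 0$ and $K = X_e$; its conclusion then furnishes a bounded linear projection $P_{\ker(\Theta)}$ and a constant $C > 0$ with the desired estimate.

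The main subtlety I anticipate is upgrading the general, possibly nonlinear, selection $G$ coming from the quotient estimate to a genuine bounded linear projection satisfying the same type of bound. This is precisely what Lemma \ref{lem:projection_transfer} delivers through its contradiction-and-compactness argument, so the obstacle is absorbed into that lemma; the only point requiring attention is that the additive constant propagates as $D = 0 \Rightarrow \tilde{D} = 0$, which is explicitly part of the conclusion of that lemma and guarantees the clean homogeneous estimate claimed here.
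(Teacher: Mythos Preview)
Your proposal is correct and follows essentially the same approach as the paper's proof: both directions are handled identically, with the paper merely asserting the forward implication in one sentence while you spell out the standard argument via weak* compactness and weak* closedness, and the converse in both cases proceeds through the quotient estimate from \cite[Remark 2.18]{brezisfunctionalanalysis}, a nonlinear selector $G$ into the finite-dimensional kernel, and Lemma~\ref{lem:projection_transfer} with $D=0$ and $K=X_e$.
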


\small

	\bibliographystyle{plain}
	\bibliography{biblio}

\end{document}